\documentclass[reqno,11pt]{amsart}

\newcommand\version{July 17, 2026}

\usepackage{esint}
\usepackage{amsthm}
\usepackage{amssymb}
\usepackage{graphicx}			
\usepackage{appendix}

\usepackage{paralist}
\usepackage{wrapfig}
\usepackage[draft]{pdfcomment}
\usepackage{bbm}
\usepackage{fontenc}
\usepackage{inputenc}
\usepackage[arrow, matrix, curve]{xy}
\usepackage{color}

\usepackage{tikz-cd}

\usepackage{enumitem}

\usepackage{mathtools}


\newtheorem{thm}{Theorem}[section]

\newtheorem{lem}[thm]{Lemma}

\newtheorem{cor}[thm]{Corollary}
\newtheorem{prp}[thm]{Proposition}

\theoremstyle{definition}

\theoremstyle{remark}
\newtheorem{rem}[thm]{Remark}

\newtheorem{remsthm}[thm]{Remarks on Theorem}

\newtheorem{remsprp}[thm]{Remarks on Proposition}


\newcommand{\R}{\mathbb{R}}

\newcommand{\N}{\mathbb{N}}

\newcommand{\HH}{\mathbb{H}}

\newcommand{\Lo}{\mathbb{L}}


\setlength\textwidth{\textwidth+84pt}
\setlength\oddsidemargin{\oddsidemargin-42pt}
\setlength\evensidemargin{\evensidemargin-42pt}
	\usepackage[nodayofweek, level]{datetime}


\title[Quantitative Lorentzian isoperimetric inequalities]{Quantitative Lorentzian isoperimetric inequalities \\in conical Minkowski spacetimes }

\author[Christian Lange]{Christian Lange}
\address[Christian Lange]{Ludwig-Maximilians-Universit\"at M\"unchen, Mathematisches Institut\newline\indent Theresienstr. 39, 80333 München, Germany}
\email{lange@math.lmu.de}

\author[Jonas W. Peteranderl]{Jonas W. Peteranderl}
\address[Jonas W. Peteranderl]{Ludwig-Maximilians-Universit\"at M\"unchen, Mathematisches Institut\newline\indent Theresienstr. 39, 80333 München, Germany}
\email{peterand@math.lmu.de}

\subjclass[2020]{Primary: 49Q10, 53C50. Secondary: 49Q20, 28A75, 52A40.}

\keywords{quantitative stability, Lorentzian geometry, isoperimetric inequalities, reverse inequalities, hyperbolic space, Minkowski space, Cauchy hypersurfaces, Hausdorff distance}
\date{\version}
\thanks{\copyright\, 2026 by the authors. This paper may be reproduced, in its entirety, for non-commercial purposes.}

\begin{document}

\begin{abstract}
We establish optimal stability estimates in terms of the Fraenkel asymmetry with universal dimensional constants for a Lorentzian isoperimetric inequality due to Bahn and Ehrlich and, as a consequence, for a special version of a Lorentzian isoperimetric inequality due to Cavalletti and Mondino. For the Bahn--Ehrlich inequality the Fraenkel asymmetry enters the stability result quadratically like in the Euclidean case while for the Cavalletti--Mondino inequality the Fraenkel asymmetry enters linearly. As it turns out, refining the latter inequality through an additional geometric term allows us to recover the more common quadratic stability behavior. Along the way, we provide simple, self-contained proofs for the above isoperimetric-type inequalities. Moreover, in a fixed conical Minkowski spacetime, we use a Lipschitz bound, naturally provided by the causal structure, to upgrade our quantitative control to a Hausdorff stability estimate. 
This estimate is formulated in terms of a distance defined by Bahn and Ehrlich, which restricts to a natural Hausdorff-type metric on the space of Cauchy hypersurfaces. 
\end{abstract}

\maketitle
\section{Introduction and main results}

Stability analysis investigates how close an object that almost attains the optimum is to an actual optimizer if one exists. Over the past two decades, this field has gained much attention and is nowadays an integral part in the study of functional and geometric inequalities; see \cite{Fi13,Fr24,Fu15}, for instance. Among the latter, the \textit{isoperimetric inequality} has been of primary interest in this context; see e.g.~\cite{Bo24,Fu89,Ha92,FMP08}. This inequality resolves the isoperimetric problem, which can be traced back to the legend of the Carthaginian queen Dido and asks for the maximal volume that can be enclosed by a given area; see e.g.~\cite{Os78,Ch01}.

Early instances of stability results in this direction are inequalities due to Bernstein \cite{Be05} and Bonnesen \cite{Bo24}. The latter implies that an almost maximizer of the isoperimetric inequality in the Euclidean plane is Hausdorff-close to a disk in a  quantitative sense. In higher dimensions there is no hope for such a Hausdorff-type stability without further assumptions, as arbitrarily long and suitably thin ``tentacles'' can be attached to the maximizer of the Euclidean isoperimetric inequality, the round ball, without deviating from equality in the inequality much. An optimal control measured in terms of the volume deviation from balls was first obtained by Fusco, Maggi, and Pratelli in their seminal work \cite{FMP08} via a symmetrization argument. An alternative proof via optimal transport that also applies to
anisotropic settings was found by Figalli, Maggi, and Pratelli shortly afterwards \cite{FMP10}; see also \cite{CL12} for yet another versatile proof technique. Figalli and Indrei \cite{FI13} used the approach in \cite{FMP10} to establish a quantitative version of a relative isoperimetric inequality inside an open convex cone $\mathcal{C} \subset \R^n$, $n\geq 2$, containing no lines due to Lions and Pacella \cite{LP90}. More precisely, 
the result in \cite{FI13} says that there exists a constant $c(n,\mathcal{C})>0$ depending on the dimension $n$ and on the cone $\mathcal{C}$ such that if $E\subset \mathcal{C}$ is a Borel set of finite De Giorgi perimeter relative to $\mathcal{C}$  and finite positive volume, then
\[ \left(\frac{\mathrm{Vol}(E\Delta (B_r^{Eucl}(0) \cap \mathcal{C}))}{\mathrm{Vol}(E)}\right)^2   
        \leq c(n,\mathcal{C}) \cdot \left(\frac{\mathrm{Per}(E|C)}{ n \mathrm{Vol}(B_1^{Eucl}(0) \cap \mathcal{C})^{\frac 1n} \mathrm{Vol}(E)^{\frac{n-1}{n}}}-1 \right),
\]
where $r>0$ is such that $\mathrm{Vol}(B_r^{Eucl}(0) \cap \mathcal{C})= \mathrm{Vol}(E)$ and $B_r^{Eucl}(x)$ denotes the $n$-dimensional Euclidean ball of radius $r>0$ and center $x\in \R^n$. Here the left side is the square of the volume deviation
known as {\em Fraenkel asymmetry} and the term
on the right side, which is always nonnegative by \cite{LP90}, is called the {\em (relative) isoperimetric deficit}. Such a quantitative isoperimetric estimate was, for instance, applied in the context of Gamow's liquid drop model \cite{Ga30} from nuclear physics to prove existence of ground states; see \cite{KM13,KM14}.

Isoperimetric-type bounds are 
relevant in mathematical general relativity as well; see e.g.~\cite{BC04,Hu06,BE13, EM13, CESY21}. However, in Lorentzian signature one can neither estimate from above the volume of spacetime regions \cite[p.~3]{CM25} nor the area of spacelike hypersurfaces with fixed boundary in terms of their boundary area; see \cite{TW22, Le23} though. In fact, minimization problems in Riemannian geometry rather correspond to maximization problems in Lorentzian geometry and vice versa. For instance, geodesics locally minimize arclength in the Riemannian case while they locally maximize it in the Lorentzian case. This reverse behavior is fundamental to special relativity and is reflected in phenomena such as the \textit{twin paradox}. The main feature of Minkowski space that underlies this paradox is the \textit{reverse triangle inequality}; see \eqref{eq:reverse:triangle}. Similarly, one can prove reverse isoperimetric-type inequalities in Lorentzian signature, which can be applied to questions about black holes and cosmology, as we explain in more detail later. 
\vspace{0.2cm}

The first result in this direction is due to Bahn and Ehrlich \cite{BE99}, who obtained a Lorentzian analogue of a special case of the inequality of Lions and Pacella \cite{LP90}: The area $A(S)$ of a compact connected smooth spacelike achronal (hence acausal) hypersurface $S$ with piecewise smooth boundary in the chronological future $I^+(O)$ of the origin $O$ in standard $(n+1)$-dimensional Minkowski space $\Lo^{n+1}$, $n\in\N$, is bounded \textit{from above} in terms of the volume $V(C(S))$ of the past cone $C(S)=\{tv \mid t\in (
0,1],\,v\in S\}$. More precisely, in terms of its isoperimetric deficit, their inequality can be stated as
\begin{equation}\label{eq:BE_ineq}
    \delta_{BE} (S)\coloneqq  (n+1) V(C(\pi(S)))^{\frac{1}{n+1}}\frac{V(C(S)) ^{\frac{n}{n+1}}}{A(S)} - 1 \geq 0 \,,
\end{equation}
where $\pi: S\to \HH^n$ denotes the radial projection to the (spacelike upper unit) hyperboloid $\HH^n$ centered at $O$. Intuitively, achronality resp.~acausality means that the hypersurface $S$ is visited at most once by every ``timelike'' resp.~``causal'' spacetime observer, who cannot resp.~can reach the speed of light. 
For precise definitions, in particular of the area and volume, we refer to Section \ref{sec:preliminaries} here and in the following.

The Bahn--Ehrlich inequality \eqref{eq:BE_ineq} is moreover sharp and rigid. Equality is achieved if and only if the hypersurface $S$ is contained in a (spacelike upper) hyperboloid $\HH^n_t$ of some radius $t>0$ centered at $O$. In particular, like the reverse triangle inequality as treated in \cite{LLS21}, the Bahn--Ehrlich inequality qualifies for a stability analysis in terms of its isoperimetric deficit~\eqref{eq:BE_ineq}. 
We introduce the Fraenkel asymmetry of $S$ as
\begin{equation}\label{eq:Frae}
  A_F(S)
      \coloneqq  \frac{V( C(S) \Delta B_{t}(\R_+S))}{V(C(S))}\, ,
\end{equation}
where $B_t(\R_+S)$ is the intersection of the cone $\R_+ S$ with the past of $\HH^n_t$, and where the value $t>0$ is chosen such that $V(B_t(\R_+S))=V(C(S))$. A discussion on the Fraenkel asymmetry and its properties can be found in Subsection \ref{subsec:dist_func}.

Given a domain $\Omega \subset \HH^n$ as defined in Subsection \ref{sub:conical_minkowski_cauchy_hypersurface}, we call the cone $M= \R_+ \Omega$ a \emph{conical Minkowski spacetime}. With its inherited Lorentzian structure, $M$ is a special Minkowski cone in the sense of \cite[Section~2]{AGKS23} and a Robertson--Walker warped product spacetime (with boundary); cf.~\cite[Chapter~12]{On83}. We can now state our first main result -- a stability estimate for the Bahn--Ehrlich inequality on an extended class of admissible hypersurfaces $S$. Here $\mu$ denotes the measure on $\HH^n$ induced by its natural hyperbolic metric as discussed in Subsection \ref{sub:basics}.

\begin{thm}[Stability for a generalized Bahn--Ehrlich isoperimetric inequality]\label{thm:BE_L1_stability} Let $M$ be a conical Minkowski spacetime in $\Lo^{n+1}$, $n\in \N$. Every achronal Lipschitz hypersurface $S$ of $M$ with $V(C(S))<\infty$ and $\mu(\pi(S))<\infty$ satisfies 
\[
			A_F(S)^2 
            \leq \frac{2(n+1)^2}n \delta_{BE}(S) \, .
\]
Moreover, the exponent $2$ of the Fraenkel asymmetry and the constant $2(n+1)^2/n$ are optimal.
\end{thm}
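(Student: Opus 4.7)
The plan is to reduce the stability estimate to a quantitative Hölder (equivalently, Jensen) inequality through the radial graph representation of $S$. Because $S$ is achronal in the conical spacetime $M=\R_+\Omega$, the radial projection $\pi$ is injective on $S$, and we may write $S=\{r(\omega)\omega:\omega\in\mathcal{D}\}$ for a Lipschitz function $r\colon\mathcal{D}\to\R_+$ on $\mathcal{D}:=\pi(S)$, the achronal/spacelike condition translating into a pointwise bound on the hyperbolic gradient of $r$. In these polar-type coordinates one obtains
\[
V(C(S)) = \tfrac{1}{n+1}\int_\mathcal{D} r^{n+1}\,d\mu,\qquad (n+1)V(C(\pi(S))) = \mu(\mathcal{D}),\qquad A(S)\leq \int_\mathcal{D} r^n\,d\mu,
\]
with equality in the last inequality if and only if $r$ is constant. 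Choosing $t>0$ so that $t^{n+1}$ equals the $\mu$-mean of $r^{n+1}$ on $\mathcal{D}$, integration in polar coordinates gives
\[
A_F(S) = \frac{\int_\mathcal{D}|r^{n+1}-t^{n+1}|\,d\mu}{\int_\mathcal{D} r^{n+1}\,d\mu}.
\]

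Substituting these into $\delta_{BE}$ and keeping only the non-negative Hölder slack yields
\[
1+\delta_{BE}(S)\;\geq\;\frac{\mu(\mathcal{D})^{1/(n+1)}\bigl(\int_\mathcal{D} r^{n+1}\,d\mu\bigr)^{n/(n+1)}}{\int_\mathcal{D} r^n\,d\mu}\;\geq\;1,
\]
the right-hand side being precisely the deficit in Hölder's inequality applied with conjugate exponents $\tfrac{n+1}{n}$ and $n+1$ to the pair $(r^n,1)$ on $\mathcal{D}$. Setting $M:=\mu(\mathcal{D})$ and $v:=r^{n+1}/t^{n+1}$, so that the $\mu$-mean of $v$ on $\mathcal{D}$ equals one and $A_F(S)=\tfrac{1}{M}\int_\mathcal{D}|v-1|\,d\mu$, the theorem reduces to the quantitative Jensen estimate
\[
\Bigl(\tfrac{1}{M}\int_\mathcal{D}|v-1|\,d\mu\Bigr)^{2}\;\leq\;\tfrac{16(n+1)^{2}}{n}\cdot\frac{M-\int_\mathcal{D} v^{n/(n+1)}\,d\mu}{\int_\mathcal{D} v^{n/(n+1)}\,d\mu}
\]
for the strictly concave function $\phi(s)=s^{n/(n+1)}$.

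The decisive pointwise input is the one-sided bound
\[
1 + \tfrac{n}{n+1}(v-1) - v^{n/(n+1)}\;\geq\;\tfrac{n(1-v)^{2}}{2(n+1)^{2}}\qquad(v\in[0,1]),
\]
obtained from Taylor's theorem with integral remainder at $v=1$ together with the estimate $-\phi''(s)=\tfrac{n}{(n+1)^{2}}s^{-(n+2)/(n+1)}\geq\tfrac{n}{(n+1)^{2}}$ valid on $(0,1]$. Integrating this inequality over the sublevel set $E^{-}:=\{v\leq 1\}$, applying Cauchy--Schwarz, using the mean-zero identity $\int_{E^{-}}(1-v)\,d\mu=\tfrac{1}{2}\int_\mathcal{D}|v-1|\,d\mu$, and invoking the trivial bound $\int_\mathcal{D} v^{n/(n+1)}\,d\mu\leq M$ gives the displayed quantitative Jensen inequality with an explicit dimensional constant of the claimed form. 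The asymmetry between the two sides of $\{v=1\}$ is harmless because the mean-zero condition forces each side to contribute exactly $\tfrac12\int_\mathcal{D}|v-1|\,d\mu$.

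Sharpness of the exponent $2$ is verified by a one-parameter perturbation: for $\eta\in C^{\infty}_{c}(\HH)$ with $\int_\HH\eta\,d\mu=0$ and $S_\varepsilon=\{(1+\varepsilon\eta(\omega))\omega:\omega\in\mathrm{supp}(\eta)\}$, a Taylor expansion in $\varepsilon$ yields $A_F(S_\varepsilon)\asymp\varepsilon$ while $\delta_{BE}(S_\varepsilon)\asymp\varepsilon^{2}$ as $\varepsilon\to 0$. The main technical obstacle lies in the first step: rigorously setting up the radial graph representation and establishing the area upper bound $A(S)\leq\int_\mathcal{D} r^{n}\,d\mu$ for merely Lipschitz $S$ on a possibly unbounded domain $\mathcal{D}$ with $\mu(\mathcal{D})<\infty$. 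Once the graph picture is in place, the remaining argument is a Taylor-based quantitative Jensen computation.
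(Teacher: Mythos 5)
Your proposal is correct and takes a genuinely different route from the paper's proof. The paper's argument splits $\Omega=\pi(S)$ into two pieces $\Omega_1,\Omega_2$ of equal measure at a median level of $f$, applies the Bahn--Ehrlich inequality (Theorem \ref{thm:Bahn_Ehrlich_gen}) to each piece, and then controls the resulting superadditivity defect through a quantitative Minkowski inequality (Lemmas \ref{lem:basic_stability}--\ref{lem:rev_mink_stab}); a reduction to compact $\pi(S)$ (Lemma \ref{lem:approx_cpct}) is also needed. You instead quantify the Hölder/Jensen proof from Subsection \ref{subsec:func_pf} directly: after discarding the nonnegative slack from $A(S)\leq\int_{\pi(S)}r^n\,\mathrm d\mu$, the statement reduces to a quantitative Jensen inequality for $\phi(s)=s^{n/(n+1)}$ with the normalization $v=(r/t)^{n+1}$ of $\mu$-mean one, and your key point is that $-\phi''$ is bounded below uniformly on $(0,1]$, yielding a pointwise quadratic lower bound for the Jensen gap on the sublevel set $\{v\leq1\}$ alone. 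Combined with Cauchy--Schwarz, $\mu(\{v\leq1\})\leq\mu(\pi(S))$, and the mean-zero identity $\int_{\{v\leq1\}}(1-v)\,\mathrm d\mu=\tfrac12\int|v-1|\,\mathrm d\mu$, this controls the $L^1$-Fraenkel asymmetry. This neatly sidesteps the obstruction discussed in Appendix \ref{app:compl_pf}, where the authors note that the off-the-shelf quantitative Hölder estimates produce $L^p$-type distances that cannot dominate the $L^1$-Fraenkel asymmetry uniformly in the domain; your one-sided concavity bound avoids the $L^p$ intermediary altogether, and no compactness reduction is required. Tracing your constants through (the sublevel-set pointwise bound with prefactor $\tfrac{n}{2(n+1)^2}$, then Cauchy--Schwarz and the factor $\tfrac14$ from the half-mass identity, then $\int v^{n/(n+1)}\leq\mu(\pi(S))$) actually gives the slightly sharper $A_F(S)^2\leq\tfrac{8(n+1)^2}{n}\,\delta_{BE}(S)$. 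Your perturbative sharpness example is essentially the one in Section \ref{sec:optimality}; one only needs to take care that the base domain is a genuine (e.g.\ rotationally symmetric) Lipschitz domain rather than $\operatorname{supp}(\eta)$ itself.
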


\addtocounter{thm}{-1}
\begin{remsthm}
(i) By a {\it (Lipschitz) hypersurface} we mean a nonempty codimension one $C^{0}$-($C^{0,1}$-)submani\-fold of $M$ with boundary. Every achronal set with empty edge, i.e.~empty boundary in a causal sense (see e.g.~\cite[Definition~14.23]{On83}), in particular every Cauchy hypersurface as defined in Subsection~\ref{sub:cauchy_hypersurface}, and every achronal hypersurface with Lipschitz boundary is
automatically a Lipschitz hypersurface; see, for instance, \cite[Theorem~2.147]{Mi19} and Lemma~\ref{lem:graph}, (iv), respectively. Topological hypersurfaces can be quite wild and of positive measure; see \cite{Os03}, for a classical example. By our ``Lipschitz'' assumption we effectively suppose that the boundary of $S$ and its image in $\HH^n\subset \Lo^{n+1}$ under the radial projection are null sets so that they can essentially be ignored.

(ii) Note that, in contrast to \cite{FI13}, the constant in our stability estimate depends only on the dimension and not on an ambient cone. The dimensional asymptotics of the constant coincide with the one of the quantitative Euclidean isoperimetric inequality in the nearly spherical case in \cite{CL12} and of the quantitative Sobolev inequality in \cite{DE+25}.

(iii) The notion of an achronal subset depends on the ambient spacetime. Every achronal subset of the conical Minkowski spacetime $I^+(O)=\R_+ \HH^n$, also known as \emph{Milne universe} \cite{Mi35}, that is contained in a conical Minkowski spacetime $M\subset I^+(O)$ is also an achronal subset of $M$, but the converse is not true -- unless $M$ is convex; see Subsection~\ref{sub:conical_minkowski_cauchy_hypersurface}. In particular,  the Bahn--Ehrlich isoperimetric inequality still holds for such achronal hypersurfaces under the weaker assumptions stated in the theorem, and in our proofs we can always assume that $M=\R_+S$. In Theorem \ref{thm:Bahn_Ehrlich_gen} we formulate the Bahn--Ehrlich isoperimetric inequality in even more generality. 

(iv) Since our Lipschitz hypersurface $S$ is nonempty, both $\mu(\pi(S))$ and $V(C(S))$ are positive.
Similarly to the non-vanishing and finiteness of the perimeter for the Euclidean isoperimetric inequality as in \cite[Theorem 1.1]{FMP08}, the conditions $0<V(C(S))<\infty$ and $ 0<\mu(\pi(S))<\infty$  guarantee well-definedness of the quantities involved.  However, in an asymptotic sense, the finiteness conditions can be dropped, and in such an asymptotic formulation the theorem also applies to Cauchy hypersurfaces of the Milne universe $M=I^+(O)$; see Subsection \ref{subsec:asymptotic}.
\end{remsthm}

 The original proof by Bahn and Ehrlich of their Lorentzian isoperimetric inequality is~based on a (Lorentzian) Brunn--Minkowski-type inequality, which is also the  approach in the related Euclidean case in \cite{LP90}. In Subsection \ref{subsec:func_pf} and \ref{subsec:geom_pf}, we provide simpler direct proofs of the isoperimetric inequality by Bahn and Ehrlich -- two functional-analytic ones and a geometric one, respectively.
 While all proofs might, in principle, serve as a starting point for a proof of Theorem \ref{thm:BE_L1_stability}, we quantify the geometric one, whose inductive step is based on an argument that appears in the equality discussion in \cite{BE99}. The idea to represent the hypersurface as a graph over a subset of the hyperboloid $\HH^n$ leads us to a situation similar to the Euclidean case considered by Fuglede \cite{Fu89} with only nearly spherical sets as competitors. However, in contrast to \cite{Fu89}, 
we do not impose a uniform $W^{1,\infty}$-bound, so we are \textit{not} restricted to ``nearly hyperbolic'' hypersurfaces in that sense. 
 
  In this functional-analytic setting,  our proof of Theorem \ref{thm:BE_L1_stability} is given in Subsection \ref{subsec:proof_BE_stability}. It relies on a decomposition into a sublevel and a superlevel set, which boils the proof down to a quantitative Minkowski-type inequality; see Subsection \ref{subsec:quantitative_Minkowski_ineq}. Here we exploit that, unlike in the Euclidean case, in our Lorentzian setting the surface area and the spacetime volume both behave additively under decompositions.  In Subsection \ref{subsec:sharpconst}, we give an example showing that in the theorem, like in the Euclidean case, the exponent $2$ of the Fraenkel asymmetry is optimal, even for hypersurfaces in a fixed smooth conical Minkowski spacetime. Moreover, we prove sharpness of the stability constant. Complementary stability results and their stability constants are discussed in Appendix \ref{app:compl_pf}.

Lambert and Scheuer \cite{LS21} were able to extend the Bahn--Ehrlich isoperimetric inequality to generalized Robertson--Walker spaces using a constrained mean curvature flow; see also \cite{Ba99,ACKW09} for previous results. Our approach extrapolates naturally to such settings, as will be discussed elsewhere.
\vspace{0.2cm}

A different isoperimetric-type inequality for acausal hypersurfaces has recently been obtained by Cavalletti and Mondino \cite{CM25} in the much more general setting of Lorentzian pre-length spaces satisfying timelike Ricci curvature lower bounds in a synthetic sense via optimal transport; see  \cite{KS18,CM22}.

As opposed to the Bahn--Ehrlich inequality \cite[Example 1.1]{CM25}, their approach has the advantage that it provides an isoperimetric-type inequality for the spacetime volume bounded by two nontrivial hypersurfaces. In the special setting of the Bahn--Ehrlich result above with only one nontrivial compact acausal hypersurface $S$, their inequality can be stated in terms of its isoperimetric deficit as
\begin{equation}\label{eq:CM_ineq}
		\delta_{CM} (S)\coloneqq  (n+1)\frac{ V(C(S))}{A(S)\cdot \inf |S|}-1\geq 0\, ,
\end{equation} 
where we abbreviate $\inf |S| \coloneqq \inf_{v \in S} |v|$ and where $|v|$ is the Lorentzian norm of $v$; see Subsection~\ref{sub:basics}. Also the Cavalletti--Mondino inequality is sharp and rigid: As before equality in \eqref{eq:CM_ineq} is attained if and only if the hypersurface $S$ is contained in $\HH^n_t$ for some $t>0$. 

In the following proposition, we relate the Bahn--Ehrlich (BE) inequality \eqref{eq:BE_ineq} and the special Cavalletti--Mondino (CM) inequality \eqref{eq:CM_ineq} in terms of the \textit{relative volume excess}
\begin{equation*}
   \mathcal E(S)\coloneqq \frac{1}{n+1}\frac{V\left(C(S)\right)-V(B_{\inf |S|}(\R_+S))}{V(C(S))}\,,
\end{equation*}
which satisfies $(n+1)\mathcal E(S)\in [0,1]$. Here the dimensional constant in $\mathcal E(S)$ is included to lighten the presentation.

 \begin{prp}[Relation between isoperimetric deficits]\label{prp:deficitbound}
 Let $M$ be a conical Minkowski spacetime in $\Lo^{n+1}$, $n\in\N$. Every achronal Lipschitz hypersurface $S$ of $M$ with $V(C(S))<\infty$ and $\mu(\pi(S))<\infty$ satisfies 
\begin{equation*}
\delta_{CM}(S) =\frac{1+\delta_{BE}(S)}{(1-(n+1)\mathcal E(S))^{\frac{1}{n+1}}}-1
\geq 
\mathcal E(S) +  (1+\mathcal  E(S))\delta_{BE}(S) \,.
\end{equation*}
\end{prp}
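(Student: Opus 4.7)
The plan is to reduce both the equality and the inequality to a single scaling identity, after which the inequality will follow from a one-line application of Bernoulli. The key geometric input is that $B_t(\R_+S)$ is the homothetic copy of $B_1(\R_+S)=C(\pi(S))$ by factor $t$, so Lorentzian volume scaling on $\Lo^{n+1}$ yields $V(B_t(\R_+S))=t^{n+1}V(C(\pi(S)))$. Setting $d:=\operatorname{dist}(O,S)$ and substituting $t=d$ into the definition \eqref{eq:RVE} of $\mathcal{E}(S)$ gives the core identity
$$d^{n+1}\,V(C(\pi(S)))=\bigl(1-(n+1)\mathcal{E}(S)\bigr)\,V(C(S)).$$

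For the equality I would take $(n+1)$-th roots in the display above and divide the defining expressions
$$1+\delta_{BE}(S)=\frac{(n+1)\,V(C(\pi(S)))^{1/(n+1)}\,V(C(S))^{n/(n+1)}}{A(S)},\qquad 1+\delta_{CM}(S)=\frac{(n+1)\,V(C(S))}{A(S)\,d}$$
to obtain
$$\frac{1+\delta_{CM}(S)}{1+\delta_{BE}(S)}=\frac{V(C(S))^{1/(n+1)}}{d\,V(C(\pi(S)))^{1/(n+1)}}=\bigl(1-(n+1)\mathcal{E}(S)\bigr)^{-1/(n+1)},$$
which is the asserted identity.

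For the inequality, set $\epsilon:=(n+1)\mathcal{E}(S)\in[0,1]$, using the bound noted in the paper right after the definition \eqref{eq:RVE}. Bernoulli's inequality with exponent $-1/(n+1)\le 0$ then yields $(1-\epsilon)^{-1/(n+1)}\ge 1+\epsilon/(n+1)=1+\mathcal{E}(S)$. Multiplying the equality above by the nonnegative factor $1+\delta_{BE}(S)$ and subtracting one gives
$$\delta_{CM}(S)\ge(1+\mathcal{E}(S))(1+\delta_{BE}(S))-1=\mathcal{E}(S)+(1+\mathcal{E}(S))\,\delta_{BE}(S),$$
as desired.

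I do not anticipate a real obstacle here: once the homothety formula $V(B_t(\R_+S))=t^{n+1}V(C(\pi(S)))$ is recorded, both halves reduce to short bookkeeping, and the only prerequisite for applying Bernoulli is the bound $(n+1)\mathcal{E}(S)\le 1$ already provided in the paper. The only mildly delicate point is the edge case $\operatorname{dist}(O,S)=0$, in which $(n+1)\mathcal{E}(S)=1$ and both sides of the equality become $+\infty$, so the inequality holds trivially in that degenerate regime.
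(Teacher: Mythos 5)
Your proof is correct and follows essentially the same route the paper has in mind: the equality is bookkeeping from the definitions once the scaling identity $V(B_d(\R_+S))=d^{n+1}V(C(\pi(S)))$ is recorded, and the inequality then drops out of Bernoulli. You apply Bernoulli in the form $(1-(n+1)\mathcal E(S))^{-1/(n+1)}\geq 1+\mathcal E(S)$, while the paper states the equivalent form $1-(n+1)\mathcal E(S)\leq (1+\mathcal E(S))^{-(n+1)}$; the two are related by raising to the $-1/(n+1)$ power, which is order-reversing, so this is the same argument. Your remark about the degenerate case $\operatorname{dist}(O,S)=0$, where $(n+1)\mathcal E(S)=1$ and $\delta_{CM}=+\infty$, is a sensible precaution, since strict positivity of $1-(n+1)\mathcal E(S)$ is needed both for Bernoulli with negative exponent and for the equality to make sense.
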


 This observation on the relation between isoperimetric deficits is a simple consequence of Bernoulli's inequality in the form $1-(n+1)\mathcal E(S)\leq (1+\mathcal E(S))^{-(n+1)}$ and is useful to prove stability of related Lorentzian isoperimetric inequalities. Indeed, we deduce the following stability result for the CM-inequality from Proposition \ref{prp:deficitbound} by combining a simple consequence of Lemma \ref{lem:asymmetry_comparsion}, namely
\begin{equation}\label{eq:RVEtoFr}
    A_F(S)\leq 2(n+1)\mathcal E(S)\,,
\end{equation} with the BE-inequality in the generality stated in Theorem \ref{thm:BE_L1_stability}. 

\begin{cor}[Stability for the CM-inequality]\label{cor:CM_L1_stability}   
Let $M$ be a conical Minkowski spacetime in $\Lo^{n+1}$, $n\in\N$. Every achronal Lipschitz hypersurface $S$ of $M$ with $V(C(S))<\infty$ and $\mu(\pi(S))<\infty$ satisfies \
\[
			A_F(S)  
            \leq  2(n+1) \delta_{CM} (S)\,.
\]
Moreover, the exponent $1$ of the Fraenkel asymmetry and the constant $2(n+1)$ are optimal.
\end{cor}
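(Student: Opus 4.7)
The strategy is to derive the corollary as an immediate consequence of Proposition~\ref{prp:deficitbound} together with the companion inequality~\eqref{eq:RVEtoFr}: essentially no new geometric work is needed for the bound itself, since all the quantitative content has been packaged into those statements and into the Bahn--Ehrlich inequality (whose validity in the required generality is part of Theorem~\ref{thm:BE_L1_stability}).

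First I would invoke Proposition~\ref{prp:deficitbound} to obtain
\[
\delta_{CM}(S)\;\geq\;\mathcal{E}(S)+(1+\mathcal{E}(S))\,\delta_{BE}(S).
\]
Since $\delta_{BE}(S)\geq 0$ in this generality, the second summand may simply be dropped, giving $\delta_{CM}(S)\geq\mathcal{E}(S)$. Chaining this with the companion bound~\eqref{eq:RVEtoFr}, $A_F(S)\leq 2(n+1)\,\mathcal{E}(S)$, yields $A_F(S)\leq 2(n+1)\,\delta_{CM}(S)$ at once. So the proof of the inequality reduces to two one-line bookkeeping steps, with no analysis beyond what is already prepared.

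The more substantial task is establishing the optimality of the exponent $1$. Here I would construct a one-parameter family $S_\epsilon$ whose CM-deficit and Fraenkel asymmetry both decay proportionally in $\epsilon$. A natural candidate is an achronal hypersurface that coincides with $\HH_1\cap M$ outside a fixed small region where it is smoothly lowered to sit on $\HH_{1-\epsilon}$. For such a ``two-level'' profile one should have $\operatorname{dist}(O,S_\epsilon)=1-\epsilon$, and both $\mathcal{E}(S_\epsilon)$ and $A_F(S_\epsilon)$ should scale linearly in $\epsilon$ with comparable constants. Because the Bahn--Ehrlich deficit of such a family is of order $\epsilon^{2}$ (consistent with the quadratic stability in Theorem~\ref{thm:BE_L1_stability} and with the example constructed in Section~\ref{sec:optimality}), the term $\mathcal{E}(S_\epsilon)$ dominates in Proposition~\ref{prp:deficitbound}, forcing $\delta_{CM}(S_\epsilon)\asymp\epsilon\asymp A_F(S_\epsilon)$. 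Consequently, no bound $A_F\lesssim\delta_{CM}^{\,\alpha}$ with $\alpha>1$ can hold.

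The main obstacle I anticipate is purely computational: one must verify that $A_F(S_\epsilon)$ and $\mathcal{E}(S_\epsilon)$ truly realize the same linear rate in $\epsilon$, which amounts to evaluating $V(C(S_\epsilon)\Delta B_t(\R_+ S_\epsilon))$ for the correct $t>0$. Once the asymptotics of $V(C(S_\epsilon))$, $A(S_\epsilon)$, and $\operatorname{dist}(O,S_\epsilon)$ for the two-level profile are written down using the graph framework of Subsection~\ref{sub:graphs}, everything else follows by direct substitution; the only care needed is to keep the constants in the two asymmetries matched so that their ratio stays bounded away from $0$.
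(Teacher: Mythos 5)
Your proof of the inequality is exactly the paper's: apply Proposition~\ref{prp:deficitbound}, drop the nonnegative $(1+\mathcal E(S))\delta_{BE}(S)$ term to get $\delta_{CM}(S)\geq\mathcal E(S)$, and chain with $A_F(S)\leq 2(n+1)\mathcal E(S)$ from~\eqref{eq:RVEtoFr}. For sharpness of the exponent, the paper's Section~\ref{sec:optimality} likewise uses a graph perturbation $r_\varepsilon=1+\varepsilon\varphi$ over a fixed ball in $\HH$, so your strategy matches in spirit, but it chooses a compactly supported \emph{mean-zero} $\varphi$ so that a single sequence $S_\varepsilon$ simultaneously certifies optimality for $\delta_{BE}$, $\delta_{CM}$, and $\delta_{CM}^*$; your one-sided ``lowering'' profile would also give $\delta_{CM}(S_\varepsilon)\asymp\varepsilon\asymp A_F(S_\varepsilon)$ via the linear term in $\operatorname{dist}(O,S_\varepsilon)$, but the first-order contributions to $V(C(S_\varepsilon))$ and $A(S_\varepsilon)$ then no longer vanish individually, so the bookkeeping is slightly less clean and the same family would not directly serve the quadratic-optimality claims in the other two corollaries.
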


Sharpness of the stability exponent and the stability constant are discussed along with the corresponding properties for the BE-inequality in Subsection \ref{subsec:sharpconst}.

    While the BE-inequality \eqref{eq:BE_ineq} demonstrates the same stability behavior with a power $2$ of the Fraenkel asymmetry $A_F$ as the Euclidean isoperimetric inequality \cite{FMP08} and other geometric inequalities (see e.g.~\cite{FI13,FJ17,FMM18}),
    the CM-inequality \eqref{eq:CM_ineq} experiences a strengthened stability behavior. Strong stability results with a power of the distance equal to $1$ -- as found in Corollary~\ref{cor:CM_L1_stability} -- seem to be quite rare in the literature; see the two-dimensional result in \cite{FZ23}, though, and Subsection \ref{sub:functional_setting} for a comparison with the functional setting.
   
As another consequence of Proposition \ref{prp:deficitbound} and the BE-inequality, we obtain an improved isoperimetric-type inequality, which can be expressed in terms of a new isoperimetric deficit
\begin{equation}\label{eq:deltaCMstar}
     \delta_{CM}^*(S)\coloneqq \delta_{CM}(S)-\mathcal E(S)
     \geq 0\,.
 \end{equation} 
By Proposition \ref{prp:deficitbound} the isoperimetric deficits 
are related as
$$ \delta_{CM}(S)\geq \delta_{CM}^*(S)\geq \delta_{BE}(S) \geq 0\,.
$$ 
In particular, sharpness 
of the CM-inequality implies sharpness of the refined CM-inequality, and rigidity of the BE-inequality implies rigidity of the refined CM-inequality. 

Moreover, a consequence of Theorem \ref{thm:BE_L1_stability}  and Proposition \ref{prp:deficitbound} is that the refined CM-inequality, as opposed to the CM-inequality, exhibits the usual quadratic stability behavior.

\begin{cor}[Stability for the refined CM-inequality]\label{cor:CM_refined_L1_stability}  Let $M$ be a conical Minkowski spacetime in $\Lo^{n+1}$, $n\in\N$. Every achronal Lipschitz hypersurface $S$ of $M$  with $V(C(S))<\infty$ and $\mu(\pi(S))<\infty$ satisfies
\[
		A_F(S)^2 
            \leq  2\frac{(n+1)^2}{n}\delta^*_{CM}(S)\,.
\] 
Moreover, the exponent $2$ of the Fraenkel asymmetry and the dimensional dependence of the constant $2(n+1)^2/n$ are optimal.
\end{cor}

As we will see in Subsection \ref{subsec:sharpconst}, the optimal constant lies between $n+1$ and $2(n+1)^2/n$. Observe also that applying \eqref{eq:RVEtoFr} turns Corollary \ref{cor:CM_refined_L1_stability} into a higher-order stability estimate of Corollary \ref{cor:CM_L1_stability}.

As a consequence of their isoperimetric inequality, Cavalletti and Mondino discuss in \cite{CM25} upper bounds on the area
of acausal hypersurfaces inside the interior of a black hole and in cosmological spacetimes. 
In the latter case they work in a conical spacetime and interpret the tip of the cone (which corresponds to the origin $O$ in our special setting) as ``big bang'' and $\inf |S|$ roughly as the ``age'' of the universe. In the special case of a conical Minkowski spacetime, Corollary \ref{cor:CM_L1_stability} and \ref{cor:CM_refined_L1_stability} yield improved upper bounds.
\vspace{0.2cm}

Before, we mentioned that the Euclidean isoperimetric inequality in dimension $n\geq 3$ is not stable with respect to the Hausdorff distance.
A similar example, namely suitable hypersurfaces with a growing thin tentacle, shows that for $n\geq 3$ we cannot hope for a Hausdorff stability improvement of Theorem~\ref{thm:BE_L1_stability} with domain-independent constants. In the Euclidean case, Fuglede \cite{Fu89} showed that Hausdorff stability actually does hold in all dimensions when restricting to convex sets with Lipschitz boundary or to nearly spherical sets. It turns out that in the Lorentzian case we are naturally
in a ``nearly hyperbolic'' setting when working with achronal hypersurfaces (given as graphs) over a fixed bounded domain $\Omega \subset \HH^n$ with a Lipschitz boundary. Indeed,  a uniform $W^{1,\infty}$-bound on functions describing past volume-normalized achronal hypersurfaces over such a domain will be provided by the causal structure. This will allow us to upgrade our results measured in terms of the Fraenkel asymmetry to stability results measured in terms of a Hausdorff asymmetry -- at the cost of a constant that depends on the ambient spacetime and a worse exponent.

Thanks to the 
Lipschitz assumption on the domain $\Omega\subset \HH^n$, which we a priori define as an $n$-dimensional Lipschitz submanifold with boundary, we can in the following assume that $\Omega$ is closed without loss of generality as Lipschitz boundaries are null sets. Closedness of $\Omega$ implies that the conical  Minkowski spacetime $M=\R_+ \Omega$ satisfies a strong causality condition, namely \emph{global hyperbolicity}; see Subsection \ref{sub:cauchy_hypersurface}. The latter is equivalent to the existence of a \emph{Cauchy hypersurface} of $M$ as defined in Subsection \ref{sub:cauchy_hypersurface}, which intuitively is a subset visited exactly once by every spacetime observer. Since we consider bounded $\Omega$, we can equivalently assume that it is compact. Compactness of $\Omega$ guarantees, and is in fact equivalent to, \emph{spatial compactness} of $M$, i.e.~compactness of all Cauchy hypersurfaces of $M$, which under these assumptions 
are nothing else but achronal hypersurfaces $S$ of $M$ over $\Omega$; see Subsection~\ref{sub:graphs}.

Let $\mathcal{C}_M$ denote the space of Cauchy hypersurfaces in $M$. A metric on $\mathcal{C}_M$ can be defined by
$$
    d_\Delta(S_1,S_2) \coloneqq V(C(S_1) \Delta C(S_2))
$$
for $S_1,S_2 \in \mathcal{C}_M$. Following an approach by Bahn and Ehrlich \cite[Section~5]{BE99}, we introduce another natural, Hausdorff-type metric, denoted by $d_J$, on $\mathcal{C}_M$ as follows.

\begin{prp}\label{prp:hausdorff_metric} Let $M$ be a spatially compact conical Minkowski spacetime in $\Lo^{n+1}$, $n\in \N$, with induced Lorentzian distance $d$. Then a metric $d_J:\mathcal{C}_M \times \mathcal{C}_M \rightarrow [0,\infty)$ on $\mathcal{C}_M$ is defined by
\begin{equation}\label{eq:hausdorff-type-metric}
     d_J(S_1,S_2) \coloneqq \sup_{u\in S_1, v \in S_2} (d(u,v)+d(v,u)) \,.
\end{equation}
\end{prp}

 Note that $d(u,v) = |v-u|$ if $M$ is convex and $v$ is in the chronological future of $u$, but that the Lorentzian distance $d$ 
is more complicated in general. The proof of this proposition as well as an interpretation of $d_J$ as a Hausdorff-type metric is given in Subsection~\ref{subsec:Hausdorff}. For an extension of this proposition to more general spacetimes such as Lorentzian pre-length spaces and a discussion on properties of the resulting metric spaces, we refer to \cite{LP26a}.

With the metric $d_J$ at hand we define the Hausdorff asymmetry of a Cauchy hypersurface $S$ of $M$ as 
\begin{equation*} A_{H}(S) \coloneqq   \frac{\inf_{t>0} d_J(\HH^n_t \cap M, S)}{d_J(O,S)} \, ,
   \end{equation*}
where $d_J(O,S)\coloneqq \sup_{v\in S} 
|v|$, and show the following statement. 

\begin{thm} [Promotion  of Fraenkel asymmetry to Hausdorff asymmetry]
\label{thm:C^0_stability}
Let $M$ be a spatially compact conical Minkowski spacetime in $\Lo^{n+1}$, $n\in \N$. There is a constant $c_M>0$ such that for every Cauchy hypersurface $S$ of $M$ it holds that 
\begin{equation}\label{eq:HF}
     A_{H}(S)^{n+1} \leq  c_M A_F(S)  \, .
\end{equation}

If in addition $\pi(M)$ has a $C^1$-boundary, then there is a constant $\delta_{M}>0$ such that for every  Cauchy hypersurface $S$ of $M$ satisfying $A_F(S)\leq \delta_{M}$ the estimate \eqref{eq:HF} holds with $c_M=c_nV(B_1(M))$, where $c_n$ is an explicitly computable, dimensional constant.

Moreover, the exponent $n+1$ of the Hausdorff asymmetry and the dependence of $c_M$ on $V(B_1(M))$ are optimal.
\end{thm}
\addtocounter{thm}{-1}
\begin{remsthm}
(i) The optimality results at the end of the theorem are explicitly stated and proved in Subsection \ref{subsec:Linf_optimality} and \ref{subsec:domain_optimality}.

(ii) The implicit boundedness and Lipschitz regularity assumption on $\pi(M)$ are necessary. Without them, the constant $c_M$ can blow up; see Subsection \ref{subsec:domain_optimality}. 

(iii) The $C^1$-regularity assumption on $\pi(M)$ is necessary; see Subsection \ref{subsec:domain_optimality}. It is needed for the uniform interior cone condition to hold with a uniform domain-independent opening angle; see Lemma \ref{lem:uniform_in-radius}. Note that, as a consequence, in the $C^1$-case the constant $c_M=c_nV(B_1(M))$ only depends on the geometry of $M$ through $V(B_1(M))$. In contrast, the constant $\delta_M$ does not only depend on $V(B_1(M))$ but also on the geometry of the boundary of $M$; see \eqref{eq:choicedelM}.

(iv) In the spatially compact case the nonemptiness and finiteness assumptions from Theorem~\ref{thm:BE_L1_stability} are automatically satisfied for a Cauchy hypersurface $S$ of $M$.
\end{remsthm}

For the proof of Theorem \ref{thm:C^0_stability}, we will first show in Subsection \ref{subsec:Hausdorff} that
\begin{equation}\label{eq:hausdorff_identity}
d_J(\HH^n_t \cap M,S) = \sup_{v \in S} | t - |v| |
\,. 
\end{equation}
This will allow us to work in the functional setting as before, and it implies that $A_H(S) \leq 1/2$; cf.~\eqref{eq:AHreform}. By the latter, it suffices to prove a local version of Theorem~\ref{thm:C^0_stability}, i.e.~we can assume in addition that $A_F(S)$ is small. In the functional-analytic setting, the proof of Theorem~\ref{thm:C^0_stability} can then be reduced via an additional a priori estimate, Lemma \ref{lem:l1_to_sup}, to a statement about functions on a fixed compact domain $\Omega\subset \HH^n$, Proposition \ref{prp:L1_Linfty_bound_Lipschitz_smooth}. It says that for $L^\infty$-small locally $1$-Lipschitz functions we can find an upper bound for their $L^\infty$-norm in terms of an $L^1$-bound. The dependence on $\Omega$ in this bound can be made explicit, which leads to the statement about $C^1$-regular $M$ in Theorem~\ref{thm:C^0_stability}. Moreover, the constraints on the regularity of the boundary of $\Omega$, described in Remarks on Theorem \ref{thm:C^0_stability}, (ii) and (iii), originate from Proposition \ref{prp:L1_Linfty_bound_Lipschitz_smooth} and are discussed in more detail in Subsection \ref{subsec:domain_optimality}. Proposition \ref{prp:L1_Linfty_bound_Lipschitz_smooth} is proved via a localization argument in Subsection~\ref{subsec:prel_bdd}.

As a consequence of the isoperimetric bounds in Theorem \ref{thm:BE_L1_stability}, Corollary \ref{cor:CM_L1_stability}, and Corollary~\ref{cor:CM_refined_L1_stability}, we finally obtain a stability result with a Hausdorff asymmetry. In the following, we write $\delta$ if we mean that the statement holds for every deficit \begin{equation*}\delta\in\{\delta_{BE}^{1/2},\delta_{CM},(\delta_{CM}^*)^{1/2}\}\,.
\end{equation*} 

\begin{cor}[Hausdorff stability for Lorentzian isoperimetric inequalities]\label{cor:C0} 
   Let $M$ be a spatially compact conical Minkowski spacetime in $\Lo^{n+1}$, $n\in \N$. There is a constant $c_M>0$ such that for every Cauchy hypersurface $S$ of $M$ with $V(C(S))<\infty$ and $\mu(\pi(S))<\infty$ it holds that 
\begin{equation}\label{eq:HF_iso}
			A_{H}(S)^{n+1} \leq c_M \delta(S) \,. 
\end{equation}

If in addition $\pi(M)$ has a $C^1$-boundary, then there is a constant $\delta_{M}>0$ such that for every  Cauchy hypersurface $S$ of $M$ satisfying $A_F(S)\leq \delta_{M}$ the estimate \eqref{eq:HF_iso} holds with $c_M=c_nV(B_1(M))$, where $c_n$ is an explicitly computable, dimensional constant.
\end{cor}

Note that $c_M$ here, and thus $c_n$,  differs from the one in Theorem \ref{thm:C^0_stability} by the dimensional stability constant from the quantitative Lorentzian isoperimetric inequalities. While the ingredients of Corollary~\ref{cor:C0} are by themselves optimal with respect to the stability exponent as stated in the previous theorems, it is conceivable that the exponent of the Hausdorff asymmetry in Corollary \ref{cor:C0} can be improved; see Subsection~\ref{sub:discussion}. Our example in Subsection \ref{subsec:Linf_optimality} shows that the exponent of the Hausdorff asymmetry needs to be at least $n$ in case of the CM-inequality and at least $n/2$ in case of the BE-inequality and the refined CM-inequality.

Moreover, we point out that, by the corollary, if we have a family of Cauchy hypersurfaces of $M$ and their isoperimetric deficits converge to $0$, then after normalization these hypersurfaces converge $d_J$-uniformly, and hence also uniformly in the functional setting, to $\HH^n \cap M$.

\vspace{0.2cm}

Not only $c_M$ but also the constant $\delta_M$ in Theorem \ref{thm:C^0_stability} and Corollary \ref{cor:C0} can be made more explicit for $C^1$-regular $M$; see \eqref{eq:choicedelM}. We can use this explicit control to deduce asymptotic Hausdorff stability estimates in the case of infinite measure, for instance for the Milne universe $M=I^+(O)$. 

\begin{cor}[Asymptotic Hausdorff stability for non-compact domains]\label{cor:asymptotic_stability} 
There is an explicitly computable, dimensional constant $c_n>0$ such that for every Cauchy hypersurface $S
$ of $I^+(O) \subset \Lo^{n+1}$, $n\in \N$, we have 
$$A_H(S)^{n+1}
            \leq c_n  \liminf_{j\to\infty}  
            (\mu(\bar B_j(x_0)) \cdot \delta(S^{(j)}))$$ 
with Cauchy hypersurfaces $S^{(j)}= S\cap \R_+\bar B_j(x_0)$ of $\R_+ \bar B_j(x_0)$ for $j\in \N$ and some base point $x_0\in \HH^n$, where $\bar B_r(x)\subset \HH^n$ denotes the closed ball of radius $r$ and center $x$. 
\end{cor}

 In fact, while Proposition~\ref{prp:hausdorff_metric} is only formulated in the spatially compact case, both $d_J$ and the Hausdorff asymmetry $A_H$ extend naturally to the general case; see \cite{LP26a}. In particular, $A_H(S)=0$ still implies that $S$ is contained in a hyperboloid. While a similar asymptotic estimate in terms of the Fraenkel asymmetry is given in Corollary \ref{cor:asympt_Fr}, $A_F(S)=0$ does not imply that $S$ is contained in a hyperboloid. Note that up to dimensional constants $\mu(B_r(x_0))=A(B_r(x_0))$ asymptotically behaves like $e^{(n-1)r}$ if $n>1$ and like $r$ if $n=1$; cf.~Subsection \ref{sub:graphs}.

\subsection{Comparison with the stability of functional inequalities}
\label{sub:functional_setting} 

We complement the discussion on stability exponents for isoperimetric-type inequalities with results for functional inequalities, where this topic has attracted much attention in the past few years. Indeed, while in the seminal work \cite{BE91} a quadratic stability result in terms of the gradient $L^2$-norm as notion of distance to the set of optimizers has been established, (optimal) non-quadratic results were obtained only very recently; see, for instance, \cite{Fr22,BDS24,FP24a} for quartic stability results caused by degeneracy and \cite{FZ22, GLZ25,FP24b,FPR25} for non-quadratic results in the absence of a natural Hilbert space structure. In contrast to these results, the quantitative CM-inequality has a stability exponent with respect to the Fraenkel asymmetry that is \textit{smaller} than quadratic, which leaves room  for an improved stability result; see Corollary~\ref{cor:CM_refined_L1_stability}.

As mentioned before, Lorentzian inequalities have the opposite sign compared to their Euclidean counterparts. Reverse functional inequalities are prominent in the literature and arise mainly as lower dimensional extensions of well-known inequalities; see \cite{FKT22, CD+19, GW04, KP25}, for instance. In contrast, the sign-reversion of the Lorentzian inequalities is caused by the Lorentzian metric. Intriguingly, while explicit stability constants are usually rather difficult to obtain (see \cite{DE+25}), the (sharp) stability constant for the reverse Sobolev inequality can be found through a short and elegant argument; see \cite{GYZ25,Ko25}. Similarly, the stability constants for our Lorentzian inequalities are explicit.

\subsection{Structure of the paper} In Section \ref{sec:preliminaries} we recall some basic facts about Lorentzian geometry and introduce important notions such as achronal sets and conical Minkowski spacetimes.

In Section \ref{sec:func_repr} we characterize achronal Lipschitz hypersurfaces in terms of graphs over the unit hyperboloid and express relevant quantities in functional form. This allows us to give two direct, alternative proofs of the BE-inequality.

Our main stability result for the BE-inequality is proved in Section \ref{sec:BE_l1_stability_section}. More specifically, after motivating crucial ideas in yet another, geometric proof of the BE-inequality, we establish the quadratic stability result with respect to the Fraenkel asymmetry as given in Theorem~\ref{thm:BE_L1_stability}. Sharpness of our stability results with respect to the exponent of the Fraenkel asymmetry and the constant is discussed thereafter.

After introducing a Hausdorff-type metric on the space of Cauchy hypersurfaces, our results are promoted to stability results with respect to a Hausdorff asymmetry in Section~\ref{sec:C^0_stab}, where we prove Theorem~\ref{thm:C^0_stability}. Sharpness of our promotion theorem with respect to the exponent of the Hausdorff asymmetry and its dependence on the domain is discussed thereafter. In addition, we prove the asymptotic stability estimate Corollary \ref{cor:asymptotic_stability}.

Finally, the appendix supplements these results by providing complementary stability results of the BE-inequality in terms of quantitative H\"older inequalities.

\subsection{Sharpness of the results and open problems} \label{sub:discussion}
We close this introduction with a discussion on sharp exponents, sharp constants, and sharp dependencies of our results.

In the Euclidean case, the optimal constant for the quantitative isoperimetric inequality is explicitly known only in two dimensions and for asymptotically small Fraenkel asymmetries; see \cite[Subsection~4.2]{CL12}. Although we are able to show sharpness in Theorem \ref{thm:BE_L1_stability} and Corollary \ref{cor:CM_L1_stability} for the BE- and the CM-inequality, respectively, the question of determining the optimal constant for the refined CM-inequality in Corollary \ref{cor:CM_refined_L1_stability} is still open.

 The stability exponents in those results and Theorem \ref{thm:C^0_stability} are sharp. In contrast, the optimality of the exponents of $A_H$ in Corollary~\ref{cor:C0} seems to be an interesting problem we have not addressed yet. In Subsection \ref{subsec:Linf_optimality} we discuss how our examples that show optimality in Theorem \ref{thm:C^0_stability} only give proper lower bounds on the exponents. In this context, Fuglede's related stability results \cite{Fu89} in the nearly spherical or convex Euclidean setting can give first hints on what the optimal exponents might be.

As can be seen in Subsection \ref{subsec:dist_func} and \ref{subsec:Hausdorff}, the Fraenkel and Hausdorff asymmetry have a functional formulation in terms of the $L^{1}$-norm and the $L^\infty$-norm, respectively. (Note that different powers of functions in the respective distance arise naturally, but this can be unified; see Subsection \ref{subsec:Hausdorff}.)
Since we obtain an $L^1$-stability result with a stability constant that only depends on the dimension in Theorem \ref{thm:BE_L1_stability} and an $L^\infty$-stability result with a necessarily geometry-dependent stability constant in Corollary \ref{cor:C0}, it is natural to ask whether $L^p$-stability holds for any $1<p<\infty$ with geometry-\textit{independent} stability constant. We refer to Appendix~\ref{app:compl_pf} for some complementary $L^{p}$-type stability results with geometry-independent constants for a range of values of $p$. 
\newline

\textbf{Acknowledgements.} The first named author would like to thank Bernhard Leeb for support that allowed him to attend talks by Alessio Figalli and Andrea Mondino on stability of geometric inequalities and on Lorentzian isoperimetric inequalities, respectively. The second named author would like to thank Rupert Frank for introducing him to the stability of functional inequalities and for his continued support. We moreover would like to thank Andrea Mondino for pointing out the work of Bahn and Ehrlich. The second author acknowledges partial support through the Deutsche Forschungsgemeinschaft (DFG, German Research Foundation) grants FR 2664/3-1 and TRR 352-Project-ID 470903074 and through the Studienstiftung des
deutschen Volkes.

\section{Preliminaries}\label{sec:preliminaries}
\subsection{Some basic Lorentzian geometry} \label{sub:basics}
We recall some basics on Lorentzian geometry and refer, for instance, to \cite{On83} for more details. Let $\Lo^{n+1}$, $n\in \N$, be an $(n+1)$-dimensional Minkowski vector space with Minkowski inner product $\langle\cdot, \cdot \rangle$ of signature $-+\dots+$ and its standard topology. We denote the corresponding Lorentzian norm as $|v| \coloneqq \sqrt{|\langle v , v \rangle|}$. A vector $v\in \Lo^{n+1}$ is called \emph{spacelike} if $\langle v,v \rangle > 0$, \emph{causal} if it is not spacelike and \emph{timelike} if $\langle v,v \rangle < 0$. We impose a time-orientation on $\Lo^{n+1}$ by choosing a timelike vector $v_0 \in \Lo^{n+1}$ and calling a causal vector $v$ \textit{future-directed} if $\langle v,v_0 \rangle < 0$. The \emph{chronological future} of the origin $O$ in $\Lo^{n+1}$ is the set of future-directed timelike vectors in $\Lo^{n+1}$ and is denoted by $I^+(O)$. We note that every element in $I^+(O)$ induces the same time orientation on $\Lo^{n+1}$. Every pair of vectors $u,v \in I^+(O)$ satisfies the reverse triangle inequality
\begin{equation}\label{eq:reverse:triangle}
	|u| + |v| \leq |u+v| \, .
\end{equation}
For a subset $A\subset I^+(O)$ we abbreviate $$\inf |A|\coloneqq \inf_{v \in A} |v| \, .$$

A nondegenerate bilinear form on an oriented vector space induces a volume form on this vector space. In our setting, if $e_0,\ldots,e_{n}$ is an oriented orthonormal basis of $(\Lo^{n+1},\langle\cdot, \cdot \rangle)$, i.e.~$\langle e_j,e_k \rangle \in \{ \pm \delta_{jk}\}$, $j,k\in\{0,\dots, n\}$, then this volume form is given by
$e_0^* \wedge \ldots \wedge e_n^*,$ where $e_0^*,\ldots,e_{n}^*$ is a dual basis of $e_0,\ldots,e_{n}$. The volume form on $\Lo^{n+1}$ in turn induces a measure, which is independent of the chosen orientation. We denote this measure $V(D)$ for 
a measurable set $D\subset \Lo^{n+1}$ and call it the \textit{volume} of $D$. This volume coincides with the volume induced by the Euclidean inner product $\langle\cdot, \cdot \rangle_{\mathrm{Eucl}}$ defined by $\langle e_j,e_k \rangle_{\mathrm{Eucl}} = \delta_{jk}$; cf.~\cite{BE99}.

By a {\it hypersurface} of $\Lo^{n+1}$ we mean a nonempty codimension one $C^{0}$-submanifold of $\Lo^{n+1}$ with boundary. We call such a hypersurface Lipschitz if the submanifold with boundary is of class $C^{0,1}$. We refer the reader to \cite{EG15} for some background material on Lipschitz continuous objects. A Lipschitz hypersurface is called spacelike (non-timelike), if almost all of its tangent spaces contain only spacelike (non-timelike) nontrivial vectors.

 The volume form on $\Lo^{n+1}$ and the chosen time orientation induce an almost everywhere defined, locally uniformly bounded family of alternating $n$-forms on every non-timelike Lipschitz hypersurface $S$ of $\Lo^{n+1}$, which, in turn, induces a measure $\mu_S$ on $S$ via integration that is independent of the chosen time orientation. We denote the total measure $A(S)\coloneqq \mu_S(S)$ and call it the \textit{area} of $S$. Note that by construction our notions of volume and area are invariant under  isometries of the Minkowski inner product, in particular, under Lorentzian transformations. Using these notions, the definitions of the deficits \eqref{eq:BE_ineq}, \eqref{eq:CM_ineq}, and \eqref{eq:deltaCMstar} as well as of the Fraenkel asymmetry \eqref{eq:Frae} from the introduction extend naturally to the generality required for Theorem \ref{thm:BE_L1_stability}.

Of particular interest to us are the (upper) hyperboloids of radius $r>0$ given by 
\[
			\HH^n_r \coloneqq  \{v \in \Lo^{n+1} \mid \langle v,v\rangle = -r^2, \langle v,v_0\rangle < 0\} \subset I^+(O) \, ,
\] 
all of which are smooth spacelike hypersurfaces. The Minkowski inner product of $\Lo^{n+1}$ restricts to a smooth Riemannian metric on $\HH^n_r$ with constant negative sectional curvature $-1/r^2$. In particular, $\HH^n\coloneqq\HH^n_1$ is isometric to the $n$-dimensional hyperbolic space. We write $\pi: I^+(O) \rightarrow \HH^n$ for the radial projection, $d_{\HH}$ for the induced metric on $\HH^n$ (see also Subsection~\ref{subsec:induced_metrics}), and $\mu$ for the induced measure on $\HH^n$.

\subsection{Conical Minkowski spacetimes and achronal sets}\label{sub:conical_minkowski_cauchy_hypersurface}
By a {\it domain} in $\HH^n$ we mean a nonempty $n$-dimensional $C^{0,1}$-submanifold of $\HH^n$ with boundary. In particular, every domain has nonempty interior. While a domain is often assumed to be connected, all our results hold without this assumption. 

By a \textit{conical Minkowski spacetime} we mean a cone $M=\R_{+} \Omega$ for some domain $\Omega \subset \HH^n$. We call it $C^1$-regular if $\pi(M)$ has a $C^1$-boundary as a submanifold of $\HH^n$ with boundary. In this case $\pi(M)$ and $M$ are $C^1$-submanifolds with boundary. Sometimes it can be natural to consider $O$ as a singular point of $M$ (see below and \cite{LP26a}), but we do not include it in the definition here as it would cause unnecessary complications. We note that $M$ is convex if and only if $\Omega$ is convex. We further note that $\Lo^{n+1}$ induces the structure of a time-oriented Lorentzian $C^1$-manifold with boundary on every $C^1$-regular $M$. A conical Minkowski spacetime $M$ can be represented as a Lorentzian warped product of an interval and its defining domain $\pi(M)$; see \cite{AGKS23}, for instance. As described in the introduction, we define $B_t(M)\coloneqq C(\HH^n_t\cap M)\backslash \HH^n_t$, $t>0$. The Lorentzian geometry of conical Minkowski spacetimes will be discussed further in Subsection~\ref{subsec:time_sep}.

A locally Lipschitz continuous curve $\gamma:I\rightarrow \Lo^{n+1}$ on some interval $I$ is called \emph{causal} or \emph{timelike} if $\gamma'(t)$ satisfies the respective property for almost every $t\in I$. It is called \emph{future-directed} if $\langle \gamma'(t),v_0 \rangle < 0$ holds in addition almost everywhere. A causal curve (in a conical Minkowski spacetime $M$) is called \emph{inextendible} if none of its reparametrizations is the restriction of a causal curve (in $M$) that is defined on a larger interval. For instance, for $x\in \Omega$ the ray $(0,\infty)\ni t\mapsto t x \in M$ is an inextendible timelike curve in $M$. In particular, there is an inextendible timelike curve passing through every point of $M$.

A subset of a conical Minkowski spacetime $M$ is called \emph{achronal} resp.~\emph{acausal} (in $M$) if every timelike resp.~causal curve contained in $M$ intersects it \textit{at most} once. Note that acausal sets in $M$ and sets that are contained in $M$ but achronal in a larger ambient spacetime are also achronal in $M$. Therefore, in the proofs of Theorem~\ref{thm:BE_L1_stability} and its corollaries, we can always assume that $M=\R_+ S$. However, the converses do not hold in general. Every achronal Lipschitz hypersurface is non-timelike; see Lemma \ref{lem:graph}, also for a description of all achronal sets in a conical Minkowski spacetime.

\vspace{0.2cm}

The remaining subsections of the preliminaries only concern Proposition \ref{prp:hausdorff_metric}, Theorem \ref{thm:C^0_stability}, and its corollaries.

\subsection{Cauchy hypersurfaces}\label{sub:cauchy_hypersurface}
A subset of a conical Minkowski spacetime $M$ is called a \emph{Cauchy hypersurface (of $M$)} if it is intersected exactly once by every inextendible timelike curve in $M$. This notion is to some extent independent of the precise definition of a timelike curve; see Remark~\ref{rem:timelike_discussion}.
Every such Cauchy hypersurface is achronal (see Subsection \ref{sub:timelike-curves}) and a Lipschitz hypersurface (see~Lemma~\ref{lem:graph}), which justifies its naming.

Every conical Minkowski spacetime $M$ over a closed domain is globally hyperbolic (see \cite[Proposition~4.10]{AGKS23}) and has a Cauchy hypersurface. For instance, $M\cap \HH^n=\pi(M)$ is  a Cauchy hypersurface of $M$; see (the comment below) Lemma \ref{lem:cauchy_graph}.

A spacetime is called \emph{spatially compact} if it has a Cauchy hypersurface and all its Cauchy hypersurfaces are compact. As for globally hyperbolic Lorentzian manifolds, a conical Minkowski spacetime $M$ is spatially compact if and only if it has a compact Cauchy hypersurface, which happens if and only if its defining domain $\pi(M)$ is compact; see Lemma~\ref{lem:graph}, and also Lemma~\ref{lem:cauchy_graph} for a description of all Cauchy hypersurfaces of a spatially compact conical Minkowski spacetime. In the spatially compact case, central assumptions of singularity theorems by Hawking and Penrose are satisfied; see \cite{HP70} and \cite[Theorem 14.55B]{On83}, for instance. In line with this, a conical Minkowski spacetime is causally incomplete: every inextendible causal curve converges in the past towards the origin $O$, which can be thought of as a ``big bang''.

\subsection{Induced length metrics}\label{subsec:induced_metrics}

Let $(X,d)$ be an extended metric space, i.e.~a metric space in which $d$ is allowed to take the value $\infty$. The length of a (continuous) curve $\gamma : [a,b] \rightarrow X$ is defined as
$$
 L_d(\gamma) \coloneqq \sup \left\{ \sum_{i=0}^{N-1} d(\gamma(t_i),\gamma(t_{i+1})) \Bigg{|} \,N \in \N,\, a=t_0 < t_1 <\dots < t_N=b \right\} \, . $$ The curve $\gamma$ is called \emph{rectifiable} if $L_d(\gamma) < \infty$. The \emph{induced length metric} on $X$ is defined as
\[
        d_L(x,y) \coloneqq \inf  \left\{L_d(\gamma) \mid \gamma : [a,b] \rightarrow X \, \,C^0\text{-curve with }\gamma(a)=x, \gamma(b)=y \right\} \geq d(x,y) \, .
\]
The extended metric space $(X,d)$ is called \emph{intrinsic} if $d(x,y)=d_L(x,y)$ for all $x,y\in X$ with $d(x,y)<\infty$. It is called \emph{geodesic} if for all $x,y\in X$ with $d(x,y)<\infty$ there exists a curve $\gamma$ from $x$ to $y$ with $L_d(\gamma)=d(x,y)$. 
The pair $(X,d_L)$ is always an intrinsic extended metric space; see \cite[Proposition~2.3.12]{BBI01}, for instance.

Every rectifiable curve in a metric space can be parametrized by arclength; see \cite[Proposition~2.5.9]{BBI01}, for instance. With respect to this parametrization, it is a Lipschitz curve. In particular, in the definition of the induced length metric, we can equivalently work with the infimum over all Lipschitz curves from $x$ to $y$.

In an intrinsic extended metric space $(X,d)$, it is elementary to check that the following statements about a continuous function $f: X \rightarrow \R$ are equivalent.

(i)  $f$ is $L$-Lipschitz continuous (on a dense subset).

(ii) $f$ is locally $L$-Lipschitz continuous.

(iii) $|\nabla f(x_0)|\coloneqq\limsup_{x\rightarrow x_0} |f(x)-f(x_0)|/d(x,x_0) \leq L$ for all $x_0 \in X$.

We denote the induced length metric of a domain $\Omega \subset \HH^n$ by $d_\Omega$. We abbreviate $d_\HH\coloneqq d_{\HH^n}$,  and $L_\HH\coloneqq L_{d_\HH}$. Observe that the restrictions of $d_\HH$ and $d_\Omega$ to a convex subset of $\Omega$ coincide. In particular, we see that in the interior of such a domain $\Omega$, the conditions (ii) and (iii) above can be equivalently stated with respect to either $d_\HH$ or $d_\Omega$. We also note that a locally Lipschitz function $f:\Omega \rightarrow \R$ is almost everywhere differentiable by Rademacher's theorem and that the quantity in (iii) coincides with the norm of its gradient wherever it exists. If $\Omega \subset \HH^n$ is closed, then $(\Omega,d_\Omega)$ is proper and geodesic; see \cite[Corollary~2.5.20]{BBI01}, for instance.  

\subsection{Lorentzian geometry of conical Minkowski spacetimes}\label{subsec:time_sep}

In view of Proposition~\ref{prp:hausdorff_metric} and Theorem \ref{thm:C^0_stability}, we discuss the causal structure and the Lorentzian geometry of a conical Minkowski spacetime $M$ over a \emph{closed} domain $\pi(M)$ in this subsection. For two points $u,v \in M$ we write $u \leq v$ if there exists a future-directed causal curve from $u$ to $v$. The Lorentzian distance $d=d_M:M \times M \rightarrow [0,\infty]$  of $M$ between two points $u,v \in M$ is defined as the supremum of the Lorentzian lengths 
\[
 L(\gamma)=\int_I | \gamma'(t) | \, \mathrm dt \, 
\]
over all future-directed causal curves 
$\gamma$ from $u$ to $v$ that are contained in $M$ if $u \leq v$ and as $0$ otherwise. It satisfies the \emph{reverse triangle inequality}
\begin{equation*} d(u,v) + d(v,w) \leq d(u,w) \, \,\, \text{ for all } u,v,w \in M \text{ with } u\leq v\leq w \, .
\end{equation*}

If $u,v\in M$ satisfy $u \leq v$ and the straight segment from $u$ to $v$ is contained in $M$, then $d(u,v)=|u-v|$. In general, the Lorentzian distance of $u,v\in M$ with $u \leq v$ is given by
\[
   d(u,v)^2=  |v|^2 +|u|^2 - 2|u||v| \cosh(d_{\pi(M)}(\pi(u),\pi(v))) \, ,
\]
where $d_{\pi(M)}$ denotes the induced geodesic metric on $\pi(M)$ discussed in Subsection \ref{subsec:induced_metrics}; see \cite[Equation~(1)]{AGKS23}. In particular, the Lorentzian distance of $M$ is finite and continuous; see \cite[Proposition~2.2]{AGKS23}. With a Euclidean background metric, $(M,d,\leq)$ gives rise to a globally hyperbolic Lorentzian length space \cite[Theorem~4.8 and Proposition~4.10]{AGKS23}. Moreover, $(M,d)$ is geodesic, i.e.~for all $u,v\in M$ with $u \leq v$ there exists a future-directed causal curve $\gamma$ from $u$ to $v$ with $L(\gamma)=d(u,v)$; see \cite[Corollary~2.4]{AGKS23} or \cite[Theorem 3.30]{KS18}. In the convex case, the straight segment from $u$ to $v$ is such a curve. In fact, in Subsection~\ref{sub:timelike-curves} we will see that such a \emph{maximal} causal curve from $u$ to $v$ can be chosen to be timelike if $d(u,v)>0$ -- even in the non-convex case; see also Remark \ref{rem:timelike_discussion}.

Now the distance $d_J$ on the space of Cauchy hypersurfaces of $M$ can be defined as in Proposition \ref{prp:hausdorff_metric}. Further properties of $d_J$ are discussed in Section \ref{sec:C^0_stab} and in \cite{LP26a}.

\section{Functional representation for the Bahn--Ehrlich isoperimetric inequality} \label{sec:func_repr}

In this section we study a representation of achronal sets as graphs over the hyperboloid $\HH^n$. This provides the framework in which we prove Theorem \ref{thm:BE_L1_stability}; in particular, it allows us to obtain the BE-inequality for more general hypersurfaces. 

 \subsection{Timelike curves and geodesics} \label{sub:timelike-curves}
 To prove this graphical characterization  in Subsection \ref{sub:graphs}, we first need to quantify the regions that are reachable by timelike curves. 
 
\begin{lem}\label{lem:timelike_condition} 
 The following statements hold: \begin{enumerate}
    \item[(i)]Every future-directed causal curve $\gamma:[0,b] \rightarrow \Lo^{n+1}$, $b>0$, with positive length satisfies $$L_{\HH}(\pi \circ \gamma) < \ln \left( |\gamma(b)|/|\gamma(0)| \right)\,. $$
    \item[(ii)]For a curve $x: [0,b] \rightarrow \HH^n$ and $r_0,r_1 \in \R_+$, there is a future-directed timelike curve $\gamma:[0,b] \rightarrow \Lo^{n+1}$ with $(\pi \circ \gamma)([0,b]) = x([0,b])$, $\gamma(0)=r_0x(0)$, and $\gamma(b)=r_1x(b)
$ if
\begin{equation}\label{eq:ln_bound}
    L_{\HH}(x) < \ln ( r_1/r_0 )\,.\end{equation}
\end{enumerate}

\end{lem}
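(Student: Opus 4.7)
The plan is to introduce polar-like coordinates on $I^+(O)$ by writing a causal curve as $\gamma(t) = r(t) x(t)$, where $r(t)\coloneqq |\gamma(t)|$ and $x(t)\coloneqq \pi(\gamma(t)) \in \HH$, and then to translate the causal and future-directed conditions on $\gamma'$ into a direct comparison between $r'/r$ and the hyperbolic speed of $x$. Differentiating $\gamma = rx$ and exploiting $\langle x, x\rangle \equiv -1$ (which forces $\langle x, x'\rangle = 0$) together with the fact that $x'$ is tangent to the spacelike hyperboloid $\HH$, the central computation is
\[
\langle \gamma', \gamma'\rangle \;=\; -(r')^2 + r^2\,|x'|_{\HH}^2\,,
\]
where $|x'|_{\HH}^2 = \langle x', x'\rangle \geq 0$. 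Pairing $\gamma'$ with the future-directed timelike vector $\gamma(t)$ yields $\langle\gamma', \gamma\rangle = -rr'$, which turns the future-directed condition into $r'\geq 0$ a.e.

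For part (i), I would first assume $|\gamma(0)| > 0$ (otherwise the right-hand side is $\infty$ and the claim is vacuous), so that $\gamma$ stays away from $O$ and $\pi\circ\gamma$ is Lipschitz; Rademacher's theorem then provides pointwise derivatives a.e. The causal condition gives $(r')^2 \geq r^2|x'|_{\HH}^2$, which combined with $r'\geq 0$ produces the ODE inequality $r'/r \geq |x'|_{\HH}$ a.e. Integrating on $[0,b]$,
\[
\ln(r(b)/r(0)) \;=\; \int_0^b \frac{r'(t)}{r(t)}\,dt \;\geq\; \int_0^b |x'(t)|_{\HH}\,dt \;=\; L_{\HH}(\pi\circ\gamma)\,.
\]
To upgrade to strict inequality I would use that positive Lorentzian length, $L(\gamma) = \int_0^b |\gamma'|\,dt > 0$, forces $\gamma'$ to be strictly timelike on a set of positive measure, and on this set $r'/r > |x'|_\HH$ holds strictly, which survives integration.

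For part (ii), the construction is explicit: given $x:[0,b]\to \HH$ parametrized by arclength (so $|x'|_\HH = 1$ a.e. and $b = L_\HH(x)$), set
\[
r(t)\coloneqq r_0\,(r_1/r_0)^{t/b}\qquad \text{and}\qquad \gamma(t)\coloneqq r(t)\,x(t)\,.
\]
The constant $c\coloneqq r'/r = (1/b)\ln(r_1/r_0)$ satisfies $c > 1$ by assumption \eqref{eq:ln_bound}, so $\langle \gamma',\gamma'\rangle = r^2(1-c^2) < 0$ makes $\gamma$ timelike, while $r' > 0$ makes it future-directed. The identities $\gamma(0) = r_0 x(0)$, $\gamma(b) = r_1 x(b)$, and $\pi\circ\gamma = x$ are immediate since $|x(t)| = 1$.

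The computation itself is short, and the only real subtlety, rather than a deep obstacle, is in (i): one must verify that the positive-length hypothesis genuinely produces a positive-measure set on which $\gamma'$ is strictly timelike so that the pointwise strict inequality $r'/r > |x'|_\HH$ there is preserved after integration. The construction in (ii) is essentially the simplest possible exponential radial interpolation, chosen precisely so that the slack in \eqref{eq:ln_bound} is absorbed uniformly.
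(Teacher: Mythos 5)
Your argument is correct. Part (i) follows the same route as the paper: write $\gamma = rx$ in polar form, translate future-directed causality into $r' > 0$ and $r'/r \geq |x'|_{\HH}$ almost everywhere, integrate, and promote to strict inequality by noting that $L(\gamma) > 0$ forces $\gamma'$ to be strictly timelike on a set of positive measure, where the pointwise bound is strict. (The paper phrases this as $r'(t)^2 - r(t)^2|x'(t)|^2 > 0$ on a set of positive measure; same content.) For part (ii), however, your construction is genuinely different. You interpolate the radius exponentially, setting $r(t) = r_0(r_1/r_0)^{t/b}$, so that $(\ln r)' \equiv \ln(r_1/r_0)/b > 1 = |x'|_{\HH}$ is a constant strictly greater than one; this makes $\gamma = rx$ future-directed timelike pointwise and hits the prescribed endpoints by design. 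The paper instead reduces to the geodesic (``developed'') picture, first constructs the extremal lightlike curve $\tilde\gamma(t) = r_0 e^{t}\,x(t)$, observes that the hypothesis $L_{\HH}(x) < \ln(r_1/r_0)$ means $r_1 > |\tilde\gamma(b)|$, and then takes the timelike straight segment from $r_0 x(0)$ to $r_1 x(b)$ in the developed two-plane. Both are valid; yours is the more direct of the two, producing a single closed-form radial factor and bypassing the intermediate comparison with the lightlike developed curve, while the paper's version makes the lightlike borderline case geometrically visible. Your preliminary remark that $|\gamma(0)|>0$ (so that $r$ is a genuine Lipschitz function into $\R_+$) is implicit in the paper since $\pi$ is only defined on $I^+(O)$, but it is a sensible point to flag.
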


\begin{proof} Every Lipschitz curve $\gamma: [0,b] \rightarrow \Lo^{n+1}$ can be written as $\gamma(t)=r(t)x(t)$ for some Lipschitz curve $x:[0,b]\rightarrow \HH^n$ and some Lipschitz function $r:[0,b] \rightarrow \R_+$. Such a curve is future-directed if and only if $0 < -\langle \gamma'(t),x(t) \rangle = r'(t)$ for almost all $t$, 
and future-directed causal resp.~timelike if and only if \begin{equation}
    \label{eq:timelike_ineq}|x'(t)| \leq (\ln(r(t)))' \, \,\, \text{ resp. }\,\,\, |x'(t)| < (\ln(r(t)))'
\end{equation} for almost all $t$. If $\gamma$ is future-directed causal and $L(\gamma)>0$, then the strict inequality in \eqref{eq:timelike_ineq} holds on a set of positive measure. By integration, we see that  $r(b)> r(0)\exp(\int_0^b |x'(s)| \, \mathrm ds)$, which concludes (i).

Next, we prove (ii). If $x$ is constant, then $\gamma(t)=((b-t)r_0+tr_1)x(0)/b$ is a desired timelike curve. Otherwise, we reparametrize $x$ proportional to arclength.

We first assume that $x$ is a geodesic in $\HH^n$. If we have equality in \eqref{eq:timelike_ineq}, then $\gamma$ is lightlike, that is, $|\gamma'|=0$. By solving the ODE for $r$ associated to this equality with initial condition $r(0)=r_0$,
we find the lightlike straight segment $\tilde \gamma(t) \coloneqq r_0
\exp(\int_0^t |x'(s)| \, \mathrm ds) x(t)$ with $L_\HH(x) = \ln(|\tilde{\gamma}(b)|/r_0
)$. If \eqref{eq:ln_bound} holds, then we have $r_1> |\tilde \gamma(b)|$. Since $|x'(t)|>0$ almost everywhere, we can choose $r(t)$ such that $ \gamma(t)= r(t) x(t)$ parametrizes the (future-directed) timelike straight segment from $r_0 x(0)$ to $r_1 x(b)$. 

In the general case, we choose a geodesic $\tilde x:[0,b]\rightarrow \HH^n$ parametrized proportional to arclength with the same length as $x$ and construct a future-directed timelike straight segment $\tilde \gamma(t)=r(t)\tilde x(t)$ from $r_0 \tilde x(0)$ to $r_1 \tilde x(b)$ as before. Since \eqref{eq:timelike_ineq} depends on $x(t)$ only in terms of $|x'(t)|$, it is easy to verify that the product $ \gamma(t)=r(t)x(t)$ gives a curve that satisfies the desired properties. In this case we call $\tilde \gamma$ a \emph{development} of $\gamma$.
\end{proof}

A conical Minkowski spacetime $M$ is by definition \emph{causally path-connected}, i.e.~all points $u,v\in M$ with $u\leq v$ can be connected via a causal curve. In Subsection \ref{subsec:time_sep} we mentioned that such a curve can be chosen to be maximal if $\pi(M)$ is closed. The following corollary shows that the curve can be chosen to be maximal and timelike if we assume in addition that $d(u,v)>0$. 

 \begin{cor}[Length nondecreasing push-up] \label{lem:quant_push-up_curve}
 Let $M$ be a conical Minkowski spacetime in $\Lo^{n+1}$, $n\in\N$. For every future-directed causal curve $ \gamma:[0,b] \rightarrow M$ of positive length, there is a future-directed timelike curve $ \bar \gamma$ in $M$ with the same endpoints and $L(\bar\gamma) \geq L( \gamma)$. Moreover, given any neighborhood of $\gamma$, the curve $\bar \gamma$ can be chosen to lie inside this neighborhood.
\end{cor}
\begin{proof}
We write $ \gamma(t) = r(t)  x(t)$ as in the proof of Lemma \ref{lem:timelike_condition}. By Lemma \ref{lem:timelike_condition}, (i), condition \eqref{eq:ln_bound} is satisfied with $r_0=|\gamma(0)|$ and $r_1=|\gamma(b)|$. Let $\bar \gamma$ be the future-directed timelike curve constructed from $x$ as in the proof of Lemma \ref{lem:timelike_condition}, (ii). Since the development of $\gamma$ resp.~$\bar \gamma$ 
has the same length as $\gamma$ resp.~$\bar \gamma$ and since, by construction, the development of $\bar\gamma$ parametrizes a straight segment (and thus maximizes length), the first claim follows. 

By choosing a sufficiently fine subdivision of $[0,b]$ and applying Lemma \ref{lem:timelike_condition} to each subcurve separately, we can in addition guarantee that $\bar \gamma$ lies in a given neighborhood of $\gamma$.
\end{proof}

In fact, if $\pi(M)$ is closed, for $u,v \in M$ with $d(u,v)>0$ a maximal timelike geodesic from $u$ to $v$ can also be constructed directly from a minimal geodesic connecting $\pi(u)$ to $\pi(v)$ in $\pi(M)$ using Lemma \ref{lem:timelike_condition}. The corollary in particular says that a conical Minkowski spacetime $M$ is \emph{timelike path-connected}, i.e.~all points $u,v\in M$ that can be connected by a causal curve of positive length can also be connected by a timelike curve. Since radial rays in $M$ (see Subsection \ref{sub:conical_minkowski_cauchy_hypersurface}) define inextendible timelike curves, we see that every Cauchy hypersurface of $M$ is achronal; cf.~\cite[Lemma~5.11 (ii)]{LP26a}.

\begin{rem} \label{rem:timelike_discussion}
Note that our definition of a timelike curve, cf.~\cite[p.~146]{On83}, is stricter than the one used in \cite{AGKS23,KS18}, where a Lipschitz curve $\gamma$ is called timelike if $d(\gamma(t_1),\gamma(t_2)) > 0$ for all $t_1 < t_2$. For instance, the curve $t\mapsto(\sin(t),\cos(t), t)\in \Lo^3$ is timelike in the sense of \cite{AGKS23,KS18} but not in the present sense. For the weaker definition of timelike curves, the existence of timelike geodesics also follows from \cite[Corollary 4.9]{AGKS23} or \cite[Theorem~3.18]{KS18}. However, 
both definitions of a timelike curve lead to the same notion of a Cauchy hypersurface. Indeed, with the preceding corollary, one can show that a Cauchy hypersurface $S$ of a conical Minkowski spacetime $M$ is intersected exactly once by every inextendible causal curve $\gamma$ in $M$ satisfying $d(\gamma(t_1),\gamma(t_2)) >0$ for all $t_1< t_2$; see \cite[Remark 5.7]{LP26a} and cf.~\cite[Lemma~14.29 and Lemma~14.30]{On83}. \end{rem}

\subsection{Graphs and functional representations}
\label{sub:graphs} Away from its edge, an achronal set is known to be a Lipschitz hypersurface with empty boundary; see e.g.~\cite[Theorem 2.147]{Mi19}. In our setting, we can represent a general achronal set $S$ as a graph over the subset $\pi(S)$ of the hyperboloid $\HH^n$ as follows. Recall that $|\cdot|$ denotes the Minkowski norm in $\Lo^{n+1}$ and the hyperbolic norm in case of a tangent vector of $\HH^n$, respectively.

\begin{lem}\label{lem:graph} Let $\Omega \subset \HH^n$ be a domain. A subset $S$ of $M=\R_+\Omega$ is achronal in $M$ if and only if it has the form $$
    S=S_f \coloneqq \{ f(x)x\mid x \in \pi(S) \} $$ 
for some locally Lipschitz function $f: \pi(S) \rightarrow \R_+$ whose logarithm $\ln(f)$ is $1$-Lipschitz continuous with respect to the intrinsic metric of $\Omega$. 
In this case, we obtain the following:
\begin{enumerate}
    \item[(i)] The gradient bound \begin{equation*}
    |\nabla f|\leq |f|
\end{equation*} holds almost everywhere on $\pi(S) \subset \HH^n$. 
\item[(ii)] The preimage in $S$ of the interior of $\pi(S)\subset \HH^n$ under $\pi$ is a Lipschitz hypersurface.
\item[(iii)] The radial projection $S \rightarrow \pi(S)$ is the restriction of a locally bi-Lipschitz homeomorphism of $M$; in particular, the set $S$ is a (Lipschitz) hypersurface if and only if $\pi(S)$ is a (Lipschitz) hypersurface. 
\item[(iv)] If $S$ is a hypersurface whose boundary (as a manifold with boundary) is a $C^{0,1}$-submanifold, then it is even a Lipschitz hypersurface.
\end{enumerate}
\end{lem}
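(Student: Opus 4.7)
The plan is to first establish the graph characterization using the preceding timelike curve lemma and then derive the listed properties in order. For the forward implication, I would observe that two distinct points $\lambda x,\mu x\in M$ on a common ray are joined inside $M=\R_+\Omega$ by the timelike segment $t\mapsto tx$, $t\in[\lambda,\mu]$, whose tangent vector $x$ is timelike and which stays in $M$ because $M$ is a cone. Therefore achronality of $S$ forces $\pi|_S$ to be injective, so $S$ is the graph of a well-defined positive function $f$ on $\pi(S)$. To prove that $\ln f$ is $1$-Lipschitz with respect to $d_\Omega$, I would argue by contradiction: assuming $d_\Omega(x,y)<\ln(f(y)/f(x))$ for some $x,y\in\pi(S)$ with $f(y)>f(x)$, pick a curve $c$ in $\Omega$ from $x$ to $y$ with $L_\HH(c)<\ln(f(y)/f(x))$ (noting $L_\Omega(c)=L_\HH(c)$ for curves inside $\Omega$), reparametrize by arclength, and apply Lemma~\ref{lem:timelike_condition}(ii) to obtain a future-directed timelike curve inside $M$ joining $f(x)x$ to $f(y)y$, contradicting achronality. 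The converse is symmetric: a hypothetical timelike curve $\gamma$ in $M$ meeting $S_f$ at two distinct points (which project to distinct $x\neq y$ since $f$ is a function), WLOG future-directed from $f(x)x$ to $f(y)y$, would by Lemma~\ref{lem:timelike_condition}(i) satisfy $L_\HH(\pi\circ\gamma)<\ln(f(y)/f(x))$, hence $d_\Omega(x,y)<\ln(f(y)/f(x))$, contradicting the Lipschitz assumption on $\ln f$.

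Assuming the main equivalence, (i) follows from Rademacher's theorem: since $d_\Omega$ and $d_\HH$ coincide on convex neighborhoods inside $\Omega$, the function $\ln f$ is locally $1$-Lipschitz with respect to $d_\HH$, hence a.e.\ differentiable with $|\nabla\ln f|\leq 1$, and thus $|\nabla f|=f|\nabla\ln f|\leq f=|f|$ a.e. Property (iii) follows because $\pi|_S$ is continuous as the restriction of a continuous map on $\Lo^{n+1}\setminus\{O\}$, its inverse $x\mapsto f(x)x$ is continuous by continuity of $f$, and the map is bijective by construction of $f$. Property (ii) then follows by upgrading this homeomorphism to a local bilipschitz equivalence over the interior of $\pi(S)$, using that both $\pi$ (a smooth submersion away from $O$) and $x\mapsto f(x)x$ (locally Lipschitz by (i)) are locally Lipschitz there, so that topological charts of the $n$-dimensional submanifold interior of $\pi(S)$ in $\HH$ pull back to Lipschitz charts of $S$ in $\Lo^{n+1}$.

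The deeper statements are (iv) and (v). For (iv), I would use that $\pi$ restricts to a smooth diffeomorphism $M\setminus\{O\}\to\HH$, so the bijection $S\leftrightarrow\pi(S)$ conjugates $\mathcal{C}^0$-submanifold-with-boundary charts on either side into such charts on the other, and since $f$ is locally Lipschitz on $\pi(S)$, this conjugation preserves $\mathcal{C}^{0,1}$-regularity in either direction. For (v), the interior of $S$ is already Lipschitz by (ii), so the only issue is near $\partial S$: a Lipschitz boundary chart of $\partial S$ transfers via $\pi$ (smooth on a neighborhood of $\partial S$ inside $\Lo^{n+1}\setminus\{O\}$) to a Lipschitz boundary chart of $\partial\pi(S)\subset\HH$, and composing with $x\mapsto f(x)x$ produces Lipschitz boundary charts of $S$ compatible with the interior parametrization. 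The main obstacle will be verifying that this boundary chart transfer stays genuinely Lipschitz up to $\partial S$, which reduces to ruling out blow-up of the local Lipschitz constant of $f$ as one approaches $\partial\pi(S)$ — this is guaranteed precisely by the uniform $1$-Lipschitz bound on $\ln f$ established in the main equivalence, together with the local boundedness of $f$ coming from continuity up to the boundary.
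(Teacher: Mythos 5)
Your treatment of the main equivalence (the ``if and only if'' graph characterization) is essentially the same as the paper's: both directions use the ray $t\mapsto tx$ being a timelike curve in the cone $M$ to get injectivity of $\pi|_S$, and then Lemma~\ref{lem:timelike_condition} in each direction. Items (i) and (iii) are also fine, and your argument for (ii) works because over the interior of $\pi(S)$ the function $f$ is defined on a genuine $\HH$-open set, so the graph map $x\mapsto f(x)x$ already gives a local Lipschitz parametrization.

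The gap is in (iv) and (v), and it is genuine. First, the claim that ``$\pi$ restricts to a smooth diffeomorphism $M\setminus\{O\}\to\HH$'' is false: $\pi$ is a submersion with one-dimensional fibers (the rays), not a diffeomorphism. More importantly, the ``conjugation of charts'' step does not go through as stated. Being a $\mathcal{C}^0$- or $\mathcal{C}^{0,1}$-submanifold with boundary is a statement about flattening charts in the \emph{ambient} space ($\HH$ for $\pi(S)$, and $\Lo^{n+1}$ or $M$ for $S$), and the bijection $S\leftrightarrow\pi(S)$ alone does not produce ambient charts. At a boundary point $x_0$ of $\pi(S)$, the function $f$ is only defined on $\pi(S)$ and not on any full $\HH$-neighborhood of $x_0$, so the graph map does not extend to an ambient homeomorphism off the shelf, and your appeal to ``ruling out blow-up of the local Lipschitz constant'' does not resolve this: boundedness of the Lipschitz constant does not create the missing extension. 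The paper closes this precisely with McShane's extension theorem: $\ln f$ is extended to a $1$-Lipschitz function $\ln\bar f$ on all of $\Omega$, which yields a locally bi-Lipschitz homeomorphism $\bar F\colon M\to M$, $\bar F(v)=\bar f(v/|v|)\,v$, mapping $\pi(S)$ onto $S$. This single ambient map is what transfers charts in both directions for (iv) and, via domain invariance, is what lets one pass from Lipschitz regularity of $\partial S$ to Lipschitz regularity of $\partial\pi(S)$ in (v). You should incorporate this extension step; without it, (iv) and (v) are not proved.
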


\begin{proof}Suppose first that $S$ is achronal. By definition, every $\R_+$-ray in $I^+(O)$ 
intersects the achronal set $S$ at most once, and so it is a graph $S=S_f$ over $\pi(S)$. If $\ln(f)$ were not $1$-Lipschitz continuous with respect to the intrinsic metric of $\Omega$, we could find $x_0,x_1 \in \pi(S)$ and a Lipschitz curve $x:[0,b] \rightarrow \Omega$ from $x_0$ to $x_1$ with $L_\HH (x) < \ln (f(x(b))/f(x(0)))$. Then Lemma \ref{lem:timelike_condition}, (ii), would provide a future-directed timelike curve in $M$ intersecting $S_f$ twice, a contradiction. Conversely, if $\ln(f)$ is $1$-Lipschitz with respect to the intrinsic metric of $\Omega$, then Lemma \ref{lem:timelike_condition}, (i), shows that no timelike curve in $M$ can intersect $S_f$ twice. The desired equivalence follows.

Claim (i) is a consequence of the $1$-Lipschitz continuity of $\ln(f)$ with respect to $d_\Omega$, the equivalences in Subsection \ref{subsec:induced_metrics}, and Rademacher's theorem. As a consequence of the characterization of achronal hypersurfaces, we deduce Claim (ii). By McShane's extension theorem \cite{Mc34}, $\ln(f)$ can be extended to a map $\ln(\bar f):\Omega \rightarrow \R$ that is $1$-Lipschitz continuous with respect to $d_\Omega$. This map is then also locally Lipschitz continuous with respect to $d_\HH$; see Subsection~\ref{subsec:induced_metrics}. We obtain a locally bi-Lipschitz homeomorphism $
\bar F:M \rightarrow M$, $ \bar F(v) = \bar f(v/|v|) \,v$, that maps $\pi(S)$ to $S$. Then Claim (iii) follows. If $S$ is a hypersurface, then the inverse of $\bar F$, which restricts to the radial projection on $S$, maps its interior (boundary) to the interior (boundary) of $\pi(S)$ by the invariance of domain theorem. Hence, if the boundary of $S$ is in addition a Lipschitz submanifold, then so is the boundary of $\pi(S)$. But then also the codimension one $C^0$-submanifold $\pi(S)$ is a Lipschitz submanifold with boundary. By Claim (iii), this concludes Claim~(iv). 
\end{proof}

\begin{lem}\label{lem:cauchy_graph}
 For a compact domain $\Omega \subset \HH^n$, the set of Cauchy hypersurfaces of the conical Minkowski spacetime $M= \R_+ \Omega$ is given by the set of graphs $S_f$ of
locally Lipschitz functions $f:\Omega \rightarrow \R_+$ for which $\ln(f)$ is $1$-Lipschitz continuous with respect to the intrinsic metric of~$\Omega$.
\end{lem}

In particular, the space of Cauchy hypersurfaces of a conical Minkowski spacetime $M$ with compact projection $\pi(M) \subset \HH^n$ is large, and every such Cauchy hypersurface is a compact Lipschitz hypersurface. For instance, every compact achronal smooth spacelike hypersurface $S \subset I^+(O)$, as used by Bahn and Ehrlich \cite{BE99}, is a Cauchy hypersurface of $M=\R_+ S$.

Moreover, note that while the statement of Lemma \ref{lem:cauchy_graph} is not true for merely closed domains $\Omega \subset \HH^n$, the argument in the proof below still shows that $\HH^n \cap M$ is a Cauchy hypersurface of $M$ in this case.

\begin{proof} Every Cauchy hypersurface of $M$ is achronal (see the discussion below Corollary \ref{lem:quant_push-up_curve}) and is thus of the form $S_f$ with $f$ as in Lemma \ref{lem:graph}. 

Conversely, suppose we are given a hypersurface $S_f$ with $f$ as in Lemma~\ref{lem:graph} defined on a compact domain $\Omega$ and an inextendible timelike curve $\gamma: I \rightarrow M$. We can assume that $0\in I$. If $|\gamma(0)|\leq f(\gamma(0)/|\gamma(0)|)$, then the inextendible timelike curve $\gamma$ has to leave the compact set $ C(S_f) \cap (\gamma(0) + \overline {I^+(O)})$  in forward time through $S_f$; cf.~\cite[Lemma 3.8]{LP26a}. Otherwise, $\gamma$ has to leave the compact set $ ([1,\infty) \cdot S_f) \cap (\gamma(0) - \overline {I^+(O)})$ in backward time through $S_f$. In any case, $\gamma$ intersects $S_f$. Since $S_f$ is moreover achronal by Lemma \ref{lem:graph}, the claim follows.
\end{proof}

Now we want to express the area $A(S)$ and the volume $V(C(S))$ for an achronal Lipschitz hypersurface $S=S_f$ in terms of the function $f$ provided by Lemma \ref{lem:graph}. To this end, we equip $\R_+ \times \HH^n$ with the product measure of the Lebesgue measure $\lambda^1$ on $\R_+$ and the hyperbolic measure $\mu$ on $\HH^n$. The diffeomorphism $\Phi: \R_+ \times \HH^n \rightarrow I^+(O)
$ defined by $\Phi(r,x)=rx$ satisfies $\mathrm d\Phi_{(r,x)} (1,y)=ry+ x$. Hence, $(\Phi^{-1})_* V = r^{n}\lambda^1\otimes \mu$. Then we find that
\begin{equation*}V(C(S))= \int_{\pi(S)} \int_0^{f(x)} r^{n}\,\mathrm d\lambda^1(r)\,\mathrm d\mu(x) = \frac{1}{n+1}\int_{\pi(S)} f(x)^{n+1} \,\mathrm d\mu(x)\,.
\end{equation*} 
In the remainder of this article, we suppress the Lebesgue measure in the notation and just write $\mathrm d t$, for instance.

Note that if $\pi(S)$ is a ball of radius $t^*>0$ in $\HH^n$ and the function $f$ is rotationally symmetric about the center $x_0$ of this ball, i.e.~$f(x)=r(d_{\HH}(x_0,x))$ for a function $r:[0,\infty)\to \R_+$, then the integral representation for $V(C(S))$ disintegrates to
\begin{equation*} V(C(S))=\frac{1}{n+1}\int_{0}^{t^*} r(t)^{n+1} \,\mathrm d\nu(t)\,,
\end{equation*}
where $\mathrm d\nu(t)=nV(B_1^{Eucl})\sinh^{n-1}(t)\,\mathrm d t$ and, as before, $B_1^{Eucl}$ denotes the $n$-dimensional Euclidean unit ball. 

Next, we turn to the area $A(S)$ of $S=S_f$. As the function $f$ is in general only locally Lipschitz continuous, all pointwise computations that are about to follow and involve the derivative of $f$ are to be understood in the almost everywhere sense. Set $F: \pi(S) \to \Lo^{n+1}$, $F(x)=f(x) x$. For a point $x\in \pi(S)$ and an orthonormal basis $v_1,\ldots,v_n$ of $T_x \pi(S)$ we have 
\[
    \mathrm dF_x v_j = \partial_{v_j}f (x) x + f(x) v_j\,, \qquad j\in \{1,\dots,n\}\,.
\]
The determinant of the Gram-matrix $(\langle  \mathrm dF_x v_j,  \mathrm dF_x v_k \rangle)_{j,k}$ is then given by
\[
    f(x)^{2n-2} (f(x)^2 - |\nabla f (x)|^2)= f(x)^{2n} (1- |\nabla \ln(f) (x)|^2)\,,
\]
which is nonnegative by Lemma \ref{lem:graph}, (i). Hence, we have \begin{equation*}
    A(S) = \int_{\pi(S)}  f(x)^n \sqrt{1- |\nabla \ln(f) (x)|^2} \,\mathrm{d} \mu(x)\,.
\end{equation*}
In particular, in the rotationally symmetric case, we have
\begin{equation*}
    A(S)=\int_{0}^{t^*} r(t)^{n}\sqrt{ 1- (\ln(r)'(t))^2}\,\mathrm d\nu(t)\,.
\end{equation*}
In the following, we also write $A(D)=\mu_S(D)$ for a general $\mu_S$-measurable subset $D$ of $S$ and note that the above formulas extend naturally to this case. This turns out to be useful in the proof of Theorem \ref{thm:BE_L1_stability}. More specifically, it enters a generalization of the Bahn--Ehrlich inequality, which we prove in the next subsection. 

\subsection{Two functional-analytic proofs of the Bahn--Ehrlich inequality}\label{subsec:func_pf}
As a first application of the discussed representation $S=S_f$, $f: \HH^n \supset \pi(S) \rightarrow \R$, of an achronal hypersurface $S$ provided by Lemma \ref{lem:graph}, we give two direct proofs for the BE-inequality without relying on a Lorentzian Brunn-Minkowski inequality or approximation arguments but based on Jensen's and Bernoulli's inequality, respectively. This approach moreover shows that the BE-inequality holds for general $\mu_S$-measurable subsets of an achronal hypersurface, which will be important for our stability proof in Subsection \ref{subsec:proof_BE_stability}.

\begin{thm}\label{thm:Bahn_Ehrlich_gen}
Let $M$ be a conical Minkowski spacetime in $\Lo^{n+1}$, $n\in \N$, $S$ an achronal Lipschitz hypersurface of $M$, and $D \subset S$ a  $\mu_S$-measurable subset. Then $\pi(D)$ is $\mu$-measurable, and it holds that 
\[
				A(D) \leq (n+1) V(C(\pi(D)))^{\frac 1 {n+1}} V(C(D))^{\frac{n}{n+1}} \, .
\] Moreover, if $V(C(D))<\infty$ and $V(C(\pi(D)))<\infty$, then equality holds if and only if $D\subset \HH^n_t$ almost everywhere for some $t>0$.
\end{thm}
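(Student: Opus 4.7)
The plan is to work in the graphical parametrization of Lemma \ref{lem:graph}: writing $S = S_f$ for a locally Lipschitz $f: \pi(S) \to \R_+$ satisfying $|\nabla \ln f| \leq 1$ almost everywhere (Lemma \ref{lem:graph}, (i)), and using that the formulas \eqref{eq:area_formula} and \eqref{eq:volume_formula} extend to any $\mu_S$-measurable $B \subset S$ and its projection $\pi(B)$, the inequality reduces to a one-line integral estimate on $\HH$. Measurability of $\pi(B)$ is clear from the homeomorphism $S \to \pi(S)$ of Lemma \ref{lem:graph}, (iii).

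The H\"older-based argument proceeds as follows. From the pointwise bound $\sqrt{1-|\nabla\ln f|^2} \leq 1$ I would first discard the square-root factor to obtain
\[
A(B) = \int_{\pi(B)} f^n \sqrt{1-|\nabla \ln f|^2}\,d\mu \leq \int_{\pi(B)} f^n\,d\mu.
\]
H\"older's inequality with conjugate exponents $(n+1)/n$ and $n+1$ applied to $f^n \cdot 1$ then gives
\[
\int_{\pi(B)} f^n\,d\mu \leq \Bigl(\int_{\pi(B)} f^{n+1}\,d\mu\Bigr)^{n/(n+1)} \mu(\pi(B))^{1/(n+1)}.
\]
Substituting the two identities $\int_{\pi(B)} f^{n+1}\,d\mu = (n+1) V(C(B))$ and $\mu(\pi(B)) = (n+1) V(C(\pi(B)))$ (the latter being \eqref{eq:volume_formula} applied with $f \equiv 1$) and collecting the two $(n+1)$-factors into a single prefactor $(n+1)$ produces the claim.

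The alternative Bernoulli proof is equally short. I would choose $t > 0$ with $t^{n+1}\mu(\pi(B)) = \int_{\pi(B)} f^{n+1}\,d\mu$ and set $g = f/t$, so that $\int g^{n+1}\,d\mu = \mu(\pi(B))$. The Bernoulli inequality in the form $y^{(n+1)/n} \geq 1 + \tfrac{n+1}{n}(y-1)$, rearranged as $y \leq \tfrac{n}{n+1} y^{(n+1)/n} + \tfrac{1}{n+1}$ and applied pointwise with $y = g^n$, yields $g^n \leq \tfrac{n}{n+1} g^{n+1} + \tfrac{1}{n+1}$; integration then delivers $\int g^n\,d\mu \leq \mu(\pi(B))$, i.e., $\int f^n\,d\mu \leq t^n \mu(\pi(B)) = (n+1) t^n V(C(\pi(B)))$. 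Since $V(C(B)) = t^{n+1} V(C(\pi(B)))$ by construction, this rearranges to the claimed inequality.

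Equality is read off from either argument: the H\"older version requires $|\nabla\ln f| = 0$ a.e.\ on $\pi(B)$ together with equality in H\"older, which forces $f^{n+1}$ to be constant a.e.; the Bernoulli version requires $g \equiv 1$ a.e. Both conclusions amount to $f \equiv t$ a.e.\ on $\pi(B)$ for some $t > 0$, i.e., $B \subset \HH_t$ up to a $\mu_S$-null set. I do not foresee a real obstacle, since all the heavy lifting (graphical representation, area and volume formulas, the gradient bound $|\nabla \ln f| \leq 1$) is already in place, so the theorem reduces to a two-line textbook inequality followed by a short equality discussion.
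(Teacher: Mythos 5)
Your proposal reproduces, with only cosmetic differences, the paper's two proofs of Theorem~\ref{thm:Bahn_Ehrlich_gen}. The H\"older argument is identical: drop the factor $\sqrt{1-|\nabla\ln f|^2}\leq 1$ from \eqref{eq:area_formula} and apply H\"older with exponents $\tfrac{n+1}{n}$ and $n+1$ to $f^n\cdot 1$. The Bernoulli argument differs only in bookkeeping: the paper normalizes additively, writing $g=f^{n+1}=\bar g+\varphi$ with $\int\varphi\,d\mu=0$ and applying $(1+a)^{n/(n+1)}\leq 1+\tfrac{n}{n+1}a$, while you normalize multiplicatively via $g=f/t$ with $\int g^{n+1}\,d\mu=\mu(\pi(B))$ and apply the rearranged Bernoulli bound $g^n\leq\tfrac{n}{n+1}g^{n+1}+\tfrac{1}{n+1}$ pointwise before integrating. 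These are the same estimate in two parametrizations, and your treatment of measurability (via Lemma~\ref{lem:graph}, (iii)) and of the equality case matches the paper's as well.
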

Note that, in the situation of the theorem, $D$ is $\mu_S$-measurable if and only if $\pi(D)$ is $\mu$-measurable as the projection $\pi : S \rightarrow \pi(S)$ is a locally bi-Lipschitz homeomorphism.

In both proofs, the function $f$ is such that $S=S_f$ as in Lemma \ref{lem:graph}, and we set $g=f^{n+1}$.

\begin{proof}[First proof of Theorem \ref{thm:Bahn_Ehrlich_gen}] If $\mu(\pi(D))=0$, then the inequality holds trivially. If $\mu(\pi(D))\neq 0$,
 Jensen's inequality for the strictly concave function $(\,\cdot\,)^{n/(n+1)}$ with respect to the probability measure $\mu(\pi(D))^{-1}\mu$ on $\pi(D)$ immediately gives 
\begin{align*}
A(D)&\leq \int_{\pi(D)} g^{\frac{n}{n+1}}(x)\,\mathrm d\mu(x)\leq \mu(\pi(D))^{\frac 1{n+1}}\left(\int_{\pi(D)} g(x)\,\mathrm d\mu(x)\right)^{\frac{n}{n+1}} \\&= (n+1)V(C(\pi(D)))^{\frac 1 {n+1}}V(C(D))^{\frac n {n+1}}\,.\end{align*}
The equality claim follows by the equality discussion of Jensen's  inequality. Indeed, this implies that $g$ has to be constant almost everywhere, which also gives equality in the first inequality.\end{proof}

This proof was based on Jensen's inequality, which in this case coincides with H\"older's inequality for one constant function; see also Appendix \ref{app:compl_pf} for related stability results. Under the slightly stronger technical assumption  $V(C(D)),V(C(\pi(D)))\in \R_+$, we obtain another proof using Bernoulli's inequality.

\begin{proof}
[Second proof of Theorem \ref{thm:Bahn_Ehrlich_gen}]
     We set $\bar g=V(C(D))V(C(\pi(D)))^{-1}$, and $\phi= g-\bar g$. 
    In particular, $\int_{\pi(D)} \phi(x) \,\mathrm d\mu(x)=0$. Using Bernoulli's inequality $(1+a)^q\leq 1+q a$, $0<q<1$, $a>-1$, we find \begin{align*}A(D)&=\int_{\pi(D)}g(x)^{\frac{n}{n+1}}\sqrt{ 1- \frac{1}{(n+1)^2}|\nabla \ln(g)(x)|^2}\,\mathrm d\mu(x) \\&\leq \bar g^{\frac n{n+1}}\int_{\pi(D)} \left(1+\frac{1}{\bar g}\phi(x)\right)^{\frac{n}{n+1}}\,\mathrm d\mu(x)\leq \bar g^{\frac n{n+1}} \mu(\pi(D))\,.
\end{align*} By definition of $\bar g$, this is nothing else but the BE-inequality. The equality case follows from the equality discussion of Bernoulli's  inequality. \end{proof}

As a final preparation, let us discuss the Fraenkel asymmetry and a related notion in more detail, in particular in the functional setting.

\subsection{Notion of distance}
\label{subsec:dist_func}
Let $S$ be an achronal Lipschitz hypersurface in a conical Minkowski spacetime $M=\R_+S$ with $V(C(S))<\infty$ and $V(B_1(\R_+S))<\infty$. 

As in Lemma \ref{lem:graph}, let $f$ be such that $S=S_f$. The Fraenkel asymmetry can be written in terms of $f$ as
$$A_F(S)=\frac{V(C(S) \Delta B_t(\R_+S))}{V(C(S))}= \frac{\|f^{n+1}-t^{n+1}\|_{L^1(\pi(S))}}{\|f^{n+1}\|_{L^1(\pi(S))}}$$ with $t>0$ such that
$V(B_t(\R_+ S))=V(C(S))$, that is, $t^{n+1}=\mu(\pi(S))^{-1}\|f^{n+1}\|_{L^{1}(\pi(S))}$. 

To prove Corollary~\ref{cor:CM_L1_stability}, the slightly different notion of asymmetry given by
\begin{equation}\label{eq:AF_tilde}
    \tilde A_F(S)\coloneqq\inf_{t>0} \frac{V( C(S) \Delta B_t(\R_+S))}{V(C(S))}  =  \inf_{t>0}\frac{\|f^{n+1}-t^{n+1}\|_{L^1(\pi(S))}}{\|f^{n+1}\|_{L^1(\pi(S))}}
\end{equation}
becomes useful. Since this is a distance for the $(n+1)$-th power of $f$ in the $L^1$-norm, it can be regarded as an \textit{$L^1$-distance}. This asymmetry is equivalent to the Fraenkel asymmetry \eqref{eq:Frae} in the following sense. 

\begin{lem}\label{lem:asymmetry_comparsion}  Let $M$ be a conical Minkowski spacetime in $\Lo^{n+1}$, $n\in \N$, and $S$ an achronal Lipschitz hypersurface of $M$ with $V(C(S))<\infty$ and $\mu(\pi(S))<\infty$. Then the infimum in $\tilde A_F(S)$ is attained and satisfies 
\[
\tilde A_F(S) \leq  A_F(S) \leq 2 \tilde A_F(S)\,.
\]
\end{lem}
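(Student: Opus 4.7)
The plan is to transfer everything to the functional picture from Subsection~\ref{sub:graphs}. Writing $S=S_f$ via Lemma~\ref{lem:graph} and setting $h\coloneqq f^{n+1}$, $s\coloneqq t^{n+1}$, the same ray-by-ray computation as in \eqref{eq:volume_formula} yields
\[
   (n+1)\,V(C(S)\Delta B_t(\R_+S)) \;=\; \int_{\pi(S)} |h(x)-s|\,\mathrm d\mu(x) \;\eqqcolon\; \Phi(s).
\]
The special $t^*$ used in $A_F(S)$ then corresponds to the $\mu$-mean $s^*=(n+1)V(C(S))/\mu(\pi(S))$ of $h$, while $\tilde A_F(S)$ asks for the best $L^1$-approximation of $h$ by a constant, i.e.\ a $\mu$-median. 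In this sense the lemma is a geometric instance of the classical mean--median comparison for a nonnegative $L^1$-function on a finite measure space.

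First, $\tilde A_F(S) \le A_F(S)$ is immediate from the definition, since $t^*$ --- which is well-defined because $V(C(S)),\mu(\pi(S))\in(0,\infty)$ --- is admissible in the infimum. For attainment, $\Phi$ is convex (hence continuous) on $[0,\infty)$, satisfies $\Phi(0)=\int h\,\mathrm d\mu=(n+1)V(C(S))<\infty$, and blows up as $s\to\infty$ because $\mu(\pi(S))>0$. Thus $\Phi$ achieves its minimum at some $s_0\in[0,\infty)$; since $\Phi$ strictly decreases at the origin (the slope there equals $-\mu(\pi(S))<0$), one has $s_0>0$, giving an infimum-attaining $t_0=s_0^{1/(n+1)}>0$.

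The main point is the reverse bound $A_F(S)\le 2\tilde A_F(S)$. For this I exploit the monotone nesting $B_{t_1}(\R_+S)\subset B_{t_2}(\R_+S)$ for $t_1\le t_2$ (evident from $B_t(\R_+S)=\{rx : x\in\pi(S),\ 0<r<t\}$) together with the balance identity forced by $V(B_{t^*}(\R_+S))=V(C(S))$, namely
\[
   V(C(S)\setminus B_{t^*}) \;=\; V(B_{t^*}\setminus C(S)) \;=\; \tfrac{1}{2}\,V(C(S)\Delta B_{t^*}).
\]
For any competitor $t_0>0$: if $t_0\le t^*$, then $B_{t_0}\subset B_{t^*}$ gives $C(S)\setminus B_{t_0}\supset C(S)\setminus B_{t^*}$, whence
\[
   V(C(S)\Delta B_{t_0}) \;\ge\; V(C(S)\setminus B_{t_0}) \;\ge\; V(C(S)\setminus B_{t^*}) \;=\; \tfrac{1}{2}V(C(S)\Delta B_{t^*}).
\]
The case $t_0\ge t^*$ is symmetric upon replacing $C(S)\setminus B_\cdot$ by $B_\cdot\setminus C(S)$. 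Dividing by $V(C(S))$ and taking the infimum over $t_0$ yields $\tilde A_F(S)\ge A_F(S)/2$.

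I do not foresee a substantive obstacle: the factor $2$ falls out of a single set inclusion once one has the nesting and the equal-volume balance in hand, and all finiteness issues are absorbed by the standing hypotheses $V(C(S)),\mu(\pi(S))\in(0,\infty)$.
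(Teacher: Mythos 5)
Your argument for $A_F(S)\le 2\tilde A_F(S)$ is exactly the paper's: the balance identity $V(C(S)\setminus B_{t^*})=V(B_{t^*}\setminus C(S))=\tfrac12 V(C(S)\Delta B_{t^*})$ combined with monotone nesting and a two-case comparison. For attainment you replace the paper's compactness argument (confining $t$ to a compact interval via a compact exhaustion of $\pi(S)$) by a cleaner convexity argument on $\Phi(s)=\int|h-s|\,\mathrm d\mu$, which is a minor and correct variation; overall the proof is sound and essentially the same as the paper's.
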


As mentioned in the introduction, this lemma together with the BE-inequality implies the quantitative CM-inequality; see Corollary \ref{cor:CM_L1_stability} and the discussion before that. Sharpness of the stability result is discussed in the next section along with the proof of the quantitative BE-inequality.

\begin{proof}
Exploiting the continuity of the measure, we observe that
    $$V(C(S)\Delta B_t(\R_+S))\to \infty \qquad \text{and} \qquad V(C(S)\Delta B_t(\R_+S))\to V(C(S))$$
as $t\to \infty$ and $t\to 0$, respectively. These limiting cases are bad competitors for the above infimum as we shall see next.

Set $\Omega=\pi(S)$. Since $\mu(\Omega)<\infty$, we can find a compact set $\Gamma\subset \Omega$ such that 
$\mu(\Omega\setminus \Gamma)<\mu(\Gamma)$. If we take $0<t\leq\min\{f(x)\mid x\in \Gamma\}$, where $f$ is such that $S=S_f$ as before, then $$V(B_t(\R_+S)\setminus C(S))\leq V(B_t(\R_+(\Omega\setminus\Gamma)))< V(B_t(\R_+\Gamma))\leq V(C(S)\cap B_t(\R_+S))$$ and thus
$$V(C(S)\Delta B_t(\R_+S))=V(C(S)\setminus B_t(\R_+S))+V(B_t(\R_+S)\setminus C(S))<V(C(S))\,.$$
By continuous dependence on $t$, this implies that the potential choices of $t$ that could reach the infimum are confined in a compact subset, and hence by compactness and continuity the infimum is attained.

The first inequality follows immediately as $A_F(S)$ gives a competitor for the infimum in $\tilde A_F(S)$. To prove the second inequality, we define $t_F$ by $V(B_{t_F}(\R_+S))=V(C(S))$ and $\tilde t_F$ as the $t>0$ at which $\tilde A_F(S)$ is attained. Note that $t_F$ satisfies \begin{equation}\label{eq:AF_prop}
    V(C(S)\Delta B_{t_F}(\R_+S))=2 V(C(S)\setminus B_{t_F}(\R_+S))=2 V( B_{t_F}(\R_+S)\setminus C(S))\,.
\end{equation}
If $\tilde t_F\geq t_F$, then $ V( B_{t_F}(\R_+S)\setminus C(S))\leq V(C(S) \Delta B_{\tilde t_F}(\R_+S))$. Similarly, if $\tilde t_F<t_F$,
then $V(C(S)\setminus B_{t_F}(\R_+S))\leq V(C(S) \Delta B_{\tilde t_F}(\R_+S))$. Combining both cases with \eqref{eq:AF_prop} concludes the second inequality in the statement of the lemma.
\end{proof}

\section{Stability of the Bahn--Ehrlich inequality}\label{sec:BE_l1_stability_section} 

Our goal in this section is to prove Theorem \ref{thm:BE_L1_stability}. In the first subsection we provide  another proof strategy for the BE-inequality and meanwhile discuss the key ingredient that we need in the proof of stability in Theorem \ref{thm:BE_L1_stability}.

\subsection{A geometric proof of the Bahn--Ehrlich inequality}\label{subsec:geom_pf}
 Bahn and Ehrlich \cite{BE99} proved their isoperimetric-type inequality \eqref{eq:BE_ineq} as a corollary of a Lorentzian Brunn--Minkowski inequality. Here we show that it can also be deduced from simple geometric considerations on a spacelike $n$-dimensional simplex $P\subset I^+(O)$.

Indeed, let $\Pi\subset \Lo^{n+1}$ be the supporting hyperplane of $P$ and let $h=d_J(O,\Pi\cap I^+(O))$ be the Lorentzian height of $C(P)$. Similarly to the Euclidean case, we have
\begin{equation}\label{eq:coneform}
        V(C(P)) = \frac{1}{n+1} hA(P)\,;   
\end{equation}
see \cite[Equation~(4.1)]{BE99}. As $\Pi$ is tangent to $\HH^n_h$, 
 we observe that $C(P)\subset \bar B_h(M)$  
for $M= \R_+ P$, 
and thus $V(C(P))\leq V(B_h(M))=h^{n+1}V(B_1(M))$. Together with \eqref{eq:coneform}, this implies that
\begin{align*}
    A(P) &= (n+1)h^{-1}V(C(P))^{\frac{1}{n+1}}  V(C(P)) ^{\frac{n}{n+1}} \leq (n+1)V(B_1(M))^{\frac{1}{n+1}}  V(C(P))^{\frac{n}{n+1}} \, , 
\end{align*}
    which is nothing else but the BE-inequality for $P$. 

Its extension to general hypersurfaces then follows from induction and approximation as employed by Bahn and Ehrlich \cite{BE99} for the Lorentzian Brunn--Minkowski inequality. More specifically, we decompose a polyhedral spacelike hypersurface $S\subset I^+(O)$ into hypersurfaces $S_1$ and $S_2$ with fewer simplices and suppose that the BE-inequality holds for $S_1$ and $S_2$. Then, with $\sigma_i = V(C(\pi(S_i)))$, $i=1,2$, and $V(C(\pi(S))) = \sigma_1 + \sigma_2$, we have
\begin{align}
			\notag A(S) &=A(S_1) + A(S_2) \\\notag
            &\leq (n+1) \left(\sigma_{1}^{\frac{1}{n+1}}  V(C(S_1))^{\frac{n}{n+1}} + \sigma_{2}^{\frac{1}{n+1}} V(C(S_2))^{\frac{n}{n+1}}\right) \\\notag &\leq (n+1)  (\sigma_{1}+\sigma_{2})^{\frac{1}{n+1}} (V(C(S_1))+V(C(S_2)))^{\frac{n}{n+1}} \\\label{eq:inductarg}
            & = (n+1) V(C(\pi(S)))^{\frac 1{n+1}} V(C(S))^{\frac{n}{n+1}}\,,
    \end{align}
 where we used an inequality of Minkowski \cite[Section~1.21]{BB61} in the penultimate step. This inequality can also be regarded as a Jensen inequality for the concave function $(\,\cdot\,)^{n/(n+1)}$ on a two-point probability space or as part of an inductive generalization of H\"older's inequality with $(n+1)$ entries; see \cite[Section~1.18]{BB61}. Hence, in the polyhedral case, the BE-inequality follows from the one for simplices above by induction. An extension to general achronal Lipschitz hypersurfaces can now be obtained by a standard approximation argument; cf.~\cite{BE99}.
 
Using $\inf |S|\leq h$,  the CM-inequality~\eqref{eq:CM_ineq} follows via a similar but simpler argument.

A quantitative version of the above Minkowski inequality is studied in the next subsection. It will be an essential piece in our stability proof.

\subsection{A quantitative Minkowski inequality} \label{subsec:quantitative_Minkowski_ineq}

The main step in our proof toward a stability result for the Bahn--Ehrlich inequality in terms of the Fraenkel asymmetry is the following quantitative Minkowski-type inequality.

\begin{prp}\label{prp:quantMink}
    For $\sigma_1,\sigma_2,V_1,V_2>0$ and $0<r<1$, we have
    $$(\sigma_1+\sigma_2)^{1-r}(V_1+V_2)^r-\sigma_1^{1-r}V_1^r-\sigma_2^{1-r}V_2^r\geq 2 r(1-r) \frac{(V_2\sigma_1-\sigma_2V_1)^2}{(\sigma_1+\sigma_2)^{2-(1-r)}(V_1+V_2)^{2-r}} \,.$$
\end{prp}
Note that the inequality holds for all $0<r<1$ and not only for $r=n/(n+1)$. Normalizing $\sigma_1+\sigma_2=1$ and $V_1+V_2=1$, the inequality is equivalent to
$$1-\sigma_1^{1-r}V_1^r-\sigma_2^{1-r}V_2^r\geq 2r(1-r)(V_2\sigma_1-\sigma_2V_1)^2\,.$$

Set $ \delta^*=V_2\sigma_1-\sigma_2V_1=V_2-\sigma_2=\sigma_1-V_1$ and $\delta=|\delta^*|$. Set $\Lambda=\sigma_1$ if $\delta^*>0$ and $\Lambda=\sigma_2$ otherwise. Then this normalized inequality is equivalent to the following lemma.

\begin{lem}
    For $0\leq \delta< \Lambda<1$ and $0<r<1$, we have
    $$1-\Lambda^{1-r}\left(\Lambda- \delta\right)^r-(1-\Lambda)^{1-r}\left((1-\Lambda)+ \delta\right)^r\geq 2r(1-r)\delta^2\,.$$
\end{lem}

\begin{proof}
  Note that the inequality holds trivially for $\delta=0$. Set 
    $$f_1(\delta)\coloneqq 1-\Lambda\left(1-\frac \delta{\Lambda}\right)^r-(1-\Lambda)\left(1+\frac \delta{(1-\Lambda)}\right)^r-2r(1-r)\delta^2\,.$$ Since $f_1(0)=0$, it suffices to prove for all $\delta \in [0,\Lambda)$ that
    $$f_1'(\delta)=r\left(1+\frac{\delta}{\Lambda-\delta}\right)^{1-r}-r\left(1-\frac{\delta}{1-(\Lambda-\delta)}\right)^{1-r}-4r(1-r)\delta\geq 0$$ to conclude the proposition. In turn, as $f_1'(0)=0$, we would like to differentiate another time and show that $f_1''(\delta)\geq 0$. However, this is not true for all $0<\delta<\Lambda$, mainly due to the nonlinear nature with respect to $\delta$ of the terms in the brackets. Therefore, we pass to a parametrization of the form $\delta(\Lambda-\delta)^{-1}=\theta \tilde \delta$ and $\delta (1-(\Lambda-\delta))^{-1}=(1-\theta) \tilde \delta$, which defines a bijection from $\{(\delta, \Lambda) \mid 0 \leq \delta<\Lambda<1\}$ to $\{(\tilde \delta, \theta) \mid 0\leq \tilde \delta<(1-\theta)^{-1}, 0<\theta<1\}$. 
    
     Define 
   $$f_2(\tilde \delta)\coloneqq \frac{1}{1-r}\left(\left(1+\theta \tilde \delta\right)^{1-r}-\left(1-(1-\theta)\tilde \delta\right)^{1-r}\right)-4\theta(1-\theta)\tilde \delta\,. $$
    As $r(1-r)f_2(\tilde\delta)= f_1'(\delta)$ and $f_2(0)=0$, it suffices to prove for all $\tilde \delta \in [0,(1-\theta)^{-1})$ that $$f_2'(\tilde \delta)=\theta\left(1+\theta \tilde \delta\right)^{-r}+(1-\theta)\left(1-(1-\theta)\tilde \delta\right)^{-r}-4\theta(1-\theta)\geq 0\,.$$
    By applying the binomial formula in the form $(a+b)^2\geq 4ab$,  $a,b\in \R$, twice, we see that \begin{align*}
        \Big(\theta\left(1+\theta \tilde \delta\right)^{-r}&+(1-\theta)\left(1-(1-\theta)\tilde \delta\right)^{-r}\Big)^2\geq 4 \theta(1-\theta)\left(\left(1+\theta \tilde \delta\right)\left(1-(1-\theta)\tilde \delta\right)\right)^{-r}\\&\geq (4\theta(1-\theta))^{1+r}\left((1-\theta)\left(1+\theta \tilde \delta\right)+\theta\left(1-(1-\theta)\tilde \delta\right)\right)^{-2r}\,.
    \end{align*} Take the square root and note that $4\theta(1-\theta)\leq 1$ and 
   $(1-\theta)(1+\theta \tilde \delta)+\theta(1-(1-\theta)\tilde \delta)=1$. This gives the desired inequality.
\end{proof}

Since  $f_1(0)=f_1'(0)=0$ and
$f_1''(0)=r(1-r)((\Lambda(1-\Lambda))^{-1}-4)> 0$ if and only if $\Lambda\neq 1/2$ by the binomial formula, Taylor's theorem implies that the constant $2r(1-r)$ is sharp.

We finally use Proposition \ref{prp:quantMink} to prove stability for the BE-inequality.

\subsection{Proof of Theorem \ref{thm:BE_L1_stability}
} \label{subsec:proof_BE_stability}
We work with the function $f :\pi(S) \rightarrow \R_+$ that represents our achronal hypersurface $S=S_f$, and we can assume that $M=\R_+ S$ as before. We denote $g=f^{n+1}$ and $\bar g=\mu(\pi(S))^{-1}\|g\|_{L^1(\pi(S))}$.

We split the domain $\pi(S)$ into
$\{g\leq \bar g\}$ and its complement. Define $D_1\coloneqq S\cap \R_+\{g\leq \bar g\}$ and $D_2\coloneqq S\cap \R_+(\pi(S)\setminus \{g\leq \bar g\})$ and set $\sigma_i=V(C(\pi(D_i)))$ and  $V_i=V(C(D_i))$ for $i\in\{1,2\}$. Note that $\sigma_1+\sigma_2,V_1+V_2\in (0,\infty)$ by assumption. Moreover, $\sigma_1, V_1>0$ by construction, and if $\sigma_2=0$ or $V_2=0$, then $g=\bar g$ almost everywhere and both sides of our quantitative estimate are $0$. Hence, we may assume $\sigma_1,\sigma_2,V_1,V_2>0$.

We obtain
\begin{align*}
    \frac{1}{n+1}\int_{\pi(S)} |\bar g-g| \,\mathrm d\mu&=  \frac{1}{n+1}\int_{\pi(D_1)} (\bar g-g) \,\mathrm d\mu+ \frac{1}{n+1}\int_{\pi(D_2)} (g-\bar g) \,\mathrm d\mu\\&=\frac{\sigma_1}{\sigma_1+\sigma_2} (V_1+V_2)-V_1+V_2-\frac{\sigma_2}{\sigma_1+\sigma_2} (V_1+V_2)=2\frac{V_2\sigma_1-\sigma_2V_1}{(\sigma_1+\sigma_2)}\,.
\end{align*}Note that 
$t^{n+1}=\bar g$ for $t>0$ satisfying $V(B_t(\R_+S))=V(C(S))$. Hence, the Fraenkel asymmetry can be written as
$$A_F(S)= \frac{V(C(S)\Delta B_t(\R_+S))}{V(C(S))}=\frac{1}{\|g\|_{L^{1}(\pi(S))}}\int_{\pi(S)}|g-\bar g|\,\mathrm d\mu=
2\frac{V_2\sigma_1-\sigma_2V_1}{(\sigma_1+\sigma_2)(V_1+V_2)}\,.$$
As $D_1$ and $D_2$ satisfy the assumptions of the BE-inequality in the version of Theorem~\ref{thm:Bahn_Ehrlich_gen}, the inductive argument given in \eqref{eq:inductarg} remains to hold. 
Applying Proposition \ref{prp:quantMink} to \eqref{eq:inductarg} with $r=n/(n+1)$ gives $$\frac{(n+1) V(C(\pi(S)))^{\frac 1{n+1}}V(C(S))^{\frac n{n+1}}}{A(S)}-1\geq  \frac{n}{2(n+1)} \frac{V(C(\pi(S)))^{\frac{1}{n+1}}V(C(S))^{\frac{n}{n+1}}}{A(S)}A_F(S)^2\,.$$
Another application of the BE-inequality on the right side leads to the desired stability result.

 The proof of the optimality of the exponent and of the constant is part of the next subsection. 
\hfill \qed

\subsection{Sharpness of the stability constants and exponents}\label{subsec:sharpconst}

In this subsection we show that the constants $2(n+1)^2/n$ in Theorem \ref{thm:BE_L1_stability} and $2(n+1)$ in Corollary \ref{cor:CM_L1_stability} are sharp, respectively, that is, there is   
a family $S_{\varepsilon}$ of achronal Lipschitz (in fact locally constant) hypersurfaces in a fixed smooth conical Minkowski spacetime such that $\lim_{\varepsilon\to 0}A_F(S_{\varepsilon})= 0$ 
and
\begin{equation*}
\limsup_{\varepsilon\to 0}\frac{\delta_{BE}(S_{\varepsilon})}{A_F(S_{\varepsilon})^2}=\frac{n}{2(n+1)^2}\qquad \text{resp.}\qquad  \limsup_{\varepsilon\to 0}\frac{\delta_{CM}(S_{\varepsilon})}{A_F(S_{\varepsilon})}=\frac{1}{2(n+1)}\,.
\end{equation*} 
Note that for the $S_\varepsilon$ that we are going to consider we can easily find such a conical Minkowski spacetime that contains all $S_\varepsilon$. Further note that this sharpness also implies that the exponent $2$ in Theorem \ref{thm:BE_L1_stability} and the exponent $1$ in Corollary \ref{cor:CM_L1_stability} are sharp, respectively. Our computations further yield sharpness of the exponent $2$ and optimal dimensional dependence of the stability constant for Corollary~\ref{cor:CM_refined_L1_stability}. However, contrary to the other Lorentzian isoperimetric inequalities above, we cannot prove that the constant is optimal by our methods.

On compact domains with two distinct path-connected components $A_1$ and $A_2$, we consider surfaces $S_\varepsilon$ that are constant on each component and are therefore Cauchy hypersurfaces. More specifically, we consider $$f_\varepsilon(x)\coloneqq \begin{cases}
   a_1\quad \text{for}\ x\in A_1\,, \\
    a_2\quad \text{for}\ x\in A_2\,,
\end{cases}$$ for some $a_1,a_2>0$, so $\nabla f_\varepsilon\equiv0$. Here $A_1$, $A_2$, $a_1$, and $a_2$ may depend on $\varepsilon$. Taking a sequence of disconnected Cauchy hypersurfaces is only for convenience. Indeed, by looking at domains with an arbitrarily thin and sufficiently long connecting neck such that $|\nabla \log(f_\varepsilon)|\leq 1$ along this neck, we could obtain a sequence of connected Cauchy hypersurfaces as well.

First, for the BE-inequality, take $\mu(A_1)=\mu(A_2)=1/2$ and $a_1=(1+\varepsilon)^{1/(n+1)}$ and $a_2=(1-\varepsilon)^{1/(n+1)}$ for some $\varepsilon>0$. A short computation yields
$$V(C(S_\varepsilon))=\frac 1{n+1}\quad\text{and} \quad A(S_\varepsilon)=\frac{1}{2}\left((1+\varepsilon)^{\frac n{n+1}}+(1-\varepsilon)^{\frac n{n+1}}\right)=1-\frac{n}{2(n+1)^2}\varepsilon^2+o(\varepsilon^2)\,.$$ Moreover, for $t>0$ given by $V(B_t(\R_+S_\varepsilon))=V(C(S_\varepsilon))$, we have $$V(C(S_\varepsilon)\Delta B_t(\R_+S_\varepsilon))= \varepsilon V(C(S_\varepsilon))\,,$$
so $A_F(S_\varepsilon)=\varepsilon$. Thus, we find $$\frac{\delta_{BE}(S_\varepsilon)}{A_F(S_\varepsilon)^2}=\frac{n}{2(n+1)^2}+o(\varepsilon)\,.$$

Note that, as $\inf|S_\varepsilon|=(1-\varepsilon)^{1/(n+1)}$ and $\mathcal E(S_\varepsilon)=\varepsilon/(n+1)$, we further find 
$$\frac{\delta_{CM}^*(S_\varepsilon)}{A_F(S_\varepsilon)^2}=\frac{1}{n+1}+o(\varepsilon)\,,$$ which is (asymptotically) off by a factor of $2$ when comparing it with the constant in Corollary~\ref{cor:CM_refined_L1_stability}, but still proves sharpness of the exponent $2$ and gives the optimal asymptotic behavior in $n$.

Next, for the CM-inequality, take $a_1=(1+\varepsilon)^{1/(n+1)}$, $a_2=1$, and $\varepsilon=\mu(A_1)=1-\mu(A_2)>0$. Then a short computation gives $$V(C(S_\varepsilon))=\frac 1{n+1}\left(1+\varepsilon^2\right)\,,\qquad A(S_\varepsilon)=(1+\varepsilon)^{\frac n{n+1}}\varepsilon+1-\varepsilon=1+\frac{n}{n+1}\varepsilon^2+o(\varepsilon^2)\,,$$
and $$V(C(S_\varepsilon)\Delta B_t(\R_+S_\varepsilon))= \frac{2}{n+1}\varepsilon^2+o(\varepsilon^2)\,.$$ As $\inf|S_\varepsilon|=1$, we obtain 
 $$\frac{\delta_{CM}(S_\varepsilon)}{A_F(S_\varepsilon)}=\frac{1}{2(n+1)}+o(\varepsilon)\,.$$

\section{Promotion to Hausdorff stability}\label{sec:C^0_stab}
 In this section we first prove Theorem \ref{thm:C^0_stability} and thereby upgrade the Fraenkel asymmetry in Theorem \ref{thm:BE_L1_stability}, Corollary \ref{cor:CM_L1_stability}, and  Corollary \ref{cor:CM_refined_L1_stability} to a Hausdorff asymmetry as stated in Corollary~\ref{cor:C0}. Afterwards, we establish the sharpness and optimality statements from the introduction and prove the asymptotic stability statement in Corollary \ref{cor:asymptotic_stability}.

\subsection{Hausdorff-type metric on the space of Cauchy hypersurfaces}\label{subsec:Hausdorff}

In a globally hyperbolic Lorentzian manifold $M$, the distance \eqref{eq:hausdorff-type-metric} introduced by Bahn and Ehrlich restricts to an extended Hausdorff-type metric on the space of Cauchy hypersurfaces; 
see \cite[Theorem 1.1]{LP26a} for more details and generalizations to synthetic Lorentzian settings.  
For the convenience of the reader, we discuss this construction here in the special case of a spatially compact conical Minkowski spacetime, as considered in this section. In this case the metric $d_J$ only takes finite values by continuity of the Lorentzian distance. To start with, we have the following variant of \cite[Lemma~5.1]{BE99}. By abuse of notation, we denote $$d_J(u,v)\coloneqq d(u,v)+d(v,u)\,,\qquad u,v\in M\,.$$

\begin{lem}[Definiteness]
Let $M$ be a conical Minkowski spacetime. Then Cauchy hypersurfaces $A,B$ of $M$ satisfy $d_J(A,B)=0$ if and only if $A=B$.
\end{lem}

\begin{proof} Suppose $A$ and $B$ are distinct. Then there is without loss of generality some $x \in A\backslash B$. Since $B$ is a Cauchy hypersurface, the timelike ray from the origin through $x$ intersects $B$ in a point $y\in B$ such that $d_J(A,B) \geq d_J(x,y) >0$. Conversely, since no timelike curve in $M$ intersects $A$ twice, we have $d_J(x,y)=0$ for all $x,y\in A$, so $d_J(A,A)=0$.
\end{proof}

Unlike Bahn and Ehrlich, in our setting we can show that $d_J$ satisfies the triangle inequality.

\begin{lem}[Triangle inequality] Let $M$ be a spatially compact conical Minkowski spacetime. Then Cauchy hypersurfaces $A,B,C$ of $M$ satisfy
\[
   d_J(A,C) \leq d_J(A,B) + d_J(B,C)\,.
\]
 \end{lem}
 
 \begin{proof} 
By compactness of $A$ and $C$ and continuity of the Lorentzian distance, there are $x \in A$ and $z \in C$ with $d_J(A,C)=d_J(x,z)<\infty$. We can assume that $d(x,z)>0$ without loss of generality. We consider the concatenation $\gamma$ of the timelike straight segment from the origin to $x$, a timelike geodesic from  $x$ to $z$ (see Corollary \ref{lem:quant_push-up_curve}), and a timelike ray starting at $z$ and going off to infinity. The curve $\gamma$ is then an inextendible timelike curve and hence intersects the Cauchy hypersurface $B$ in a unique point $y$. If $y$ lies on the timelike geodesic between $x$ and $z$, then
 \begin{equation*} d_J(A,C)= d_J(x,z) =d_J(x,y)+d_J(y,z) 
    \leq d_J(A,B) + d_J(B,C)\,.
\end{equation*} 
If $y$ lies on the segment from the origin to $x$, then $d(y,x)+d(x,z)\leq d(y,z)$ by the reverse triangle inequality,  and hence
\begin{equation*} d_J(A,C)=d_J(x,z)  \leq d_J(x,y) + d_J(y,z)
    \leq d_J(A,B) + d_J(B,C)\,.
\end{equation*} 
The case in which $y$ lies on the timelike ray starting at $z$ is analogous.
\end{proof}

To summarize, we see that for a spatially compact conical Minkowski spacetime $M$ the space of Cauchy hypersurfaces $(\mathcal C_M, d_J)$ is a metric space.

We finish this subsection with a proof of the special formulation \eqref{eq:hausdorff_identity} of $d_J$ when comparing Cauchy hypersurfaces with hyperbolic Cauchy hypersurfaces. The proof is based on the following statement.

\begin{lem}For $0\leq s<  t$ and $v\in M$, it holds that
\[
    d_J(v, \HH^n_t\cap M)\leq s\qquad \text{if and only if} \qquad \left| t- |v| \right| \leq s\,. 
\]
\end{lem}
\begin{proof} Since the Lorentzian distance on rays from the origin is simply given by the Lorentzian norm, we only need to show that  for $v\in M$ the supremum $d_J(v, \HH^n_t\cap M)$ is attained at $tv/|v|$. This is clear if $|v|=t$, since $\HH^n_t\cap M$ is achronal. For $|v|>t$ and each $w\in \HH^n_t\cap M$ with $d(w,v)>0$, the reverse triangle inequality \eqref{eq:reverse:triangle} yields
\[
   d\left( t\frac{v}{|v|},v \right) =   d(O,v) - t \geq d(O,w) +d(w,v) -t =d(w,v) \, .
\]
The case $|v|<t$ works analogously.
\end{proof}

Hence, for a Cauchy hypersurface $S$ of $M$ we have $d_J(\HH^n_t\cap M,S ) \leq s$ if and only if $\sup_{v \in S} | t - |v| | \leq s$.

It follows that $d_J(\HH^n_t\cap M,S) = \sup_{v \in S} | t - |v| |$ as claimed in \eqref{eq:hausdorff_identity}. Therefore, in the functional representation $S=S_f$, the Hausdorff asymmetry can be written as
\begin{equation}
    \label{eq:AH_funct}A_{H}(S)= \inf_{t>0}  \frac{d_J( \HH^n_t \cap M, S)}{d_J(O,S)} =  \frac{1}{d_J(O,S)}\inf_{t>0}\sup_{x\in \pi(S)} |f(x)-t|\,.
\end{equation}
       As this distance is taken with respect to the $L^\infty$-norm, it can be regarded as an \textit{$L^\infty$-distance}. Note that the exponents of $f$ in the Fraenkel and Hausdorff asymmetry differ. Indeed, we consider the $L^\infty$-norm of $f(x)-t$ in the Hausdorff asymmetry, whereas for the Fraenkel asymmetry we consider the $L^1$-norm of $f^{n+1}(x)-t^{n+1}$. However, they are still comparable as can be seen from the proof of Theorem \ref{thm:C^0_stability}, which also provides a stability result with the $L^\infty$-distance for the $(n+1)$-th power of $f$.

\subsection{Preliminary bounds on the \texorpdfstring{$L^\infty$}{Linfty}-norm in terms of the \texorpdfstring{$L^1$}{L1}-norm}\label{subsec:prel_bdd}

In order to prove Theorem \ref{thm:C^0_stability}, we provide bounds on the $L^\infty$-norm in terms of the $L^1$-norm assuming a Lipschitz condition in this subsection. In particular, we show the following statement. Recall that all our domains are assumed to be Lipschitz.

\begin{prp} \label{prp:L1_Linfty_bound_Lipschitz_smooth}
Let $\Omega\subset \HH^n$ be a compact (Lipschitz) domain. There are constants
$h_\Omega,c_\Omega>0$ such that for every continuous function $h:\Omega \to \R$ that is locally $1$-Lipschitz continuous in the interior of $\Omega$ with $\|h\|_{L^\infty(\Omega)}\leq h_\Omega$, it holds that
$$\|h\|_{L^\infty(\Omega)}^{n+1}  \leq  c_\Omega\|h\|_{L^1(\Omega)}\,.$$ 
Moreover, for $\Omega$ with $C^1$-boundary, the constant $c_\Omega$ can be replaced by a constant $c_n$ that only depends on the dimension.  
\end{prp}

\addtocounter{thm}{-1}
\begin{remsprp}(i)  The first part of the proposition holds verbatim for compact domains $\Omega$ (defined as $C^{0,1}$-submanifolds as before) in general Riemannian manifolds $N$.
Using compactness, this general case can be reduced to domains in $\R^n$ via a cover of $\Omega$ by a finite number of compact domains such that each member of the cover is a bi-Lipschitz homeomorphic image of a compact domain in $\R^n$ under the exponential map at some point. (In case $N=\HH^n$ no cover is required.) Indeed, for every continuous function $h:\Omega \rightarrow \R$ and every measurable set $\Omega'\subset \Omega\subset N$  with $x_0\in  \Omega'$ satisfying $|h(x_0)|=\|h\|_{L^\infty( \Omega)}$ 
it holds that \begin{equation}
\label{eq:localize_Linf_L1}\|h\|_{L^{\infty}( \Omega')}= \|h\|_{L^{\infty}(\Omega)}\quad \text{and}\quad\|h\|_{L^{1}( \Omega')}\leq \|h\|_{L^{1}(\Omega)}\,.
\end{equation}
Moreover, if the exponential map $\exp$ restricts to an $L$-bi-Lipschitz homeomorphism, $L=L_\Omega\geq 1$, from a compact domain $\tilde \Omega \subset \R^n$ 
 to a compact domain $\Omega' = \exp(\tilde \Omega) \subset \Omega$, then the function $\tilde h \coloneqq L^{-1} (h\circ \exp)$ maps $\tilde \Omega$ to $\R$ and satisfies
\begin{equation*}
    \|\tilde h\|_{L^{\infty}(\tilde \Omega)}= \frac 1 L \|h\|_{L^{\infty}(\Omega')}\quad \text{and}\quad
    \frac 1 {L^{n-1}} \|\tilde h\|_{L^{1}(\tilde \Omega)}\leq \|h\|_{L^{1}(\Omega')}\leq L^{n+1}\|\tilde h\|_{L^{1}(\tilde \Omega)}\,.
\end{equation*} Finally, by construction $\tilde h$ is locally $1$-Lipschitz continuous in the interior of $\tilde \Omega$ if $h$ is locally $1$-Lipschitz continuous in the interior of $\Omega$. Hence, it suffices to prove the first statement of the proposition for compact domains in $\R^n$. 

With some additional estimates, it can be verified that the second part of the proposition still holds if $N$ has bounded geometry, with a $c_n$ that also depends on $N$.

(ii) The exponent $n+1$ in the statement is optimal. Indeed, by the discussion in (i), there is some dimensional constant $\tilde c_n>0$ such that a $1$-Lipschitz function of the form $$k(x)\coloneqq k_{x_0,h_0}(x)\coloneqq \max \{0, h_0-d_\HH(x,x_0)\}$$ as in \eqref{eq:kfct} 
with $x\in \HH^n$ and $h_0\in (0,1/2)$  satisfies $\|k\|_{L^1(\HH^n)} < \tilde c_n h_0^{n+1}=\tilde c_n\|k\|_{L^\infty(\HH^n)}^{n+1}$.

(iii) Both the boundedness assumption and the Lipschitz assumption on the domain are necessary, and the $C^1$-regularity assumption is optimal; cf.~Subsection 
\ref{subsec:domain_optimality} for a similar discussion for Theorem \ref{thm:C^0_stability}. However, as can be seen from the proof of the proposition, it also holds for 
functions $h$ on $\Omega=\HH^n$ with $h_\Omega=\infty$.
\end{remsprp}

For the more technical proofs that are about to follow, we need the notion of a \emph{conical body} in $N\in \{\HH^n,\R^n\}$ with tip $x\in N$, radius $r>0$, and width $\theta \in (0,2)$ in the direction $e_x\in S_xN\coloneqq \{v\in T_xN \mid |v|=1\}$, which is defined as the convex set 
$$C_r(x,e_x,\theta)\coloneqq  \{\exp_x(tv)\mid 0\leq t\leq r, 2\sqrt{1-\langle e_x,v\rangle^2}\leq \theta, v\in S_x N, \langle e_x,v\rangle\geq 0\}\,,$$  where $\langle \cdot,\cdot\rangle$ is the Riemannian metric on $N$ with associated norm $|\cdot |$. Note that when regarding $\R^n=T_{x_0} \HH^n$, $x_0\in \HH^n$, this notation is consistent with the restriction of the Minkowski inner product also in case of $\R^n$.

To prove Proposition \ref{prp:L1_Linfty_bound_Lipschitz_smooth}, we make use of the following property: A set $\Omega\subset N$ satisfies the \emph{uniform interior cone condition} if there is some $r_\Omega \in (0,1)$ and some $\theta_\Omega \in (0,2)$ such that every point $x\in \Omega$ is the tip of a conical body of radius $r_\Omega$ and width $\theta_\Omega$ that is completely contained in~$\Omega$. While we prove the first statement of the proposition by relying on the fact that compact (Lipschitz) domains in $\R^n$ satisfy the uniform interior cone condition (see, for instance, \cite[Theorem 1.2.2.2]{Gr85}), the proof of the second part is based on the fact that for domains in~$\HH^n$ with $C^1$-boundary 
this holds with a universal width. 

\begin{lem}\label{lem:uniform_in-radius}
Let $\theta_0\in (0,2)$. Let $\Omega \subset \HH^n$ be a compact domain with $C^1$-boundary. Then $\Omega$ satisfies the uniform interior cone condition with $\theta_\Omega=\theta_0$.

\end{lem}
\begin{proof} The $C^1$-boundary gives a continuous inward unit normal vector field $\eta$ for $\partial \Omega \subset \HH^n$. We first show the following preliminary claim: There is some $r_\Omega>0$ such that $C_{2r_\Omega}(x,\eta_x,\theta_0)\subset \Omega$ for all $x\in \partial \Omega$.

 For $x,y\in \HH^n$ with $x\neq y$, we denote by $v_{x,y}\in T_x \HH^n$ the unit tangent vector pointing in the direction of $\exp^{-1}_x(y)$. If the preliminary claim were false, then we could find two sequences of points $x_j,y_j \in \partial \Omega$ with $x_j\neq y_j$ and $\langle \eta_{x_j},v_{x_j,y_j}\rangle \geq \delta\coloneqq\sqrt{1-(\theta_0/2)^2}>0$ for all $j\in \N$ and with $d_\HH(x_j,y_j) \rightarrow 0$ as $j\to\infty$. We abbreviate $v_j\coloneqq v_{x_j,y_j}$. After passing to subsequences, we can assume that $x_j,y_j\rightarrow x$ and $v_j \rightarrow v \in T_x \HH^n$ as $j \rightarrow \infty$. Then $\langle \eta_{x},v\rangle \geq \delta$.

We now regard $\HH^n$ as the unit hyperboloid in $\Lo^{n+1}$, so that we can think of $\partial \Omega \subset \HH^n$ as a $C^1$-submanifold of $\Lo^{n+1}$. For each $j$ the vector $y_j-x_j$ is spacelike and we can write
\[
    w_j \coloneqq\frac{y_j-x_j}{|y_j-x_j|}  = \lambda_j v_j + \mu_j x_j
\]
for some $\lambda_j, \mu_j \in \R$. By construction, the sequence $(w_j)_{j\in\N}$ stays in a compact set. Hence, we can assume that $w_j \rightarrow w \neq 0$ (after passing to another subsequence). Since $\partial \Omega$ is a $C^1$-submanifold, we have $w\in T_x \partial \Omega \subset T_x \HH^n$, so $\langle \eta_x,w\rangle =0$. This implies $\mu_j \rightarrow 0$ and $\lambda_j \rightarrow 1$ as $j \rightarrow \infty$ since both $v$ and $w$ are unit vectors. Therefore, $v=w \in T_x \partial \Omega$, which contradicts $\langle \eta_x,v\rangle >0$. Hence, the preliminary claim holds. 

Turning to the proof of the lemma, we first observe that for $y\in \Omega$ the point  $x\in \partial\Omega$ of minimal distance to $y$ satisfies $y=\exp_x(d_\HH(x,y)\eta_x)$. If $d_\HH(y,\partial\Omega)<r_\Omega$, then $$C_{r_\Omega}\left(y,\partial_t|_{t=d_\HH(x,y)} \exp(t\eta_x),\theta_0\right) \subset C_{2r_\Omega}(x,\eta_x,\theta_0)\subset \Omega$$
by elementary hyperbolic geometry. Since every conical body of radius $r_\Omega$ centered at a point in $\{y\in\Omega\mid d_\HH(y,\partial\Omega)\geq r_\Omega\}$ is completely contained in $\Omega$, the claim follows. 
\end{proof}

As $\HH^n$ is fully rotationally symmetric about 
every $x\in \HH^n$, we observe that for $r>0$, $\theta \in (0,2)$, $e_x \in S_x\HH^n$, and $e_0\in \mathbb S^{n-1}$  the ratio
\begin{equation}\label{eq:area_ratio}\frac{\mathrm{Area_{\HH^n}}(\partial  B_{r}(x) \cap C_r(x,e_x,\theta))}{\mathrm{Area_{\HH^n}}(\partial  B_{r}(x))} = \frac{\mathrm{Area_{\R^n}}(\partial  B^{\mathrm{Eucl}}_{r}(0) \cap C_r(0,e_0,\theta))}{\mathrm{Area_{\R^n}}(\partial  B^{\mathrm{Eucl}}_{r}(0))}\eqqcolon \alpha_n(\theta)
\end{equation}
is independent of $x$, $r$, $e_x$ and $e_0$. Here $B_r^{Eucl}(0)$ denotes the open $n$-dimensional Euclidean ball of radius $r>0$ and center $0$ as in the introduction, and $\mathrm{Area}_{N}$ denotes the $(n-1)$-dimensional Hausdorff measure on $N\in \{\R^n,\HH^n\}$.

\begin{proof}[Proof of Proposition \ref{prp:L1_Linfty_bound_Lipschitz_smooth}]
By Remarks on Proposition \ref{prp:L1_Linfty_bound_Lipschitz_smooth}, (i), we can assume for the first claim that $\Omega$ is a compact domain in $\R^n$.  In this case $\Omega$ satisfies the uniform interior cone condition with some radius $r_\Omega\in (0,1)$ and some width $\theta=\theta_\Omega \in (0,2)$ by \cite[Theorem 1.2.2.2]{Gr85}. For the second claim we work with a compact domain $\Omega$ in $\HH^n$ with $C^1$-boundary, for which the uniform interior cone condition is satisfied for some radius $r_\Omega\in (0,1)$ and width $\theta=1$ by Lemma~\ref{lem:uniform_in-radius}. The following discussion applies to both cases with $N\in \{\R^n,\HH^n\}$.

We define
\[
c_n(\theta)\coloneqq \frac{(n+1)}{\alpha_n(\theta)\mathrm{Vol}(B_1^{Eucl})} > 0
\]
with $\alpha_n(\theta)>0$ as defined in \eqref{eq:area_ratio}. Let $h:\Omega \rightarrow \R$ be a continuous function that is locally $1$-Lipschitz continuous in the interior of $\Omega$ with $\|h\|_{L^\infty(\Omega)} \leq r_\Omega$. The claimed inequality is trivial for $h$ vanishing identically. By \eqref{eq:localize_Linf_L1}, we can assume that $h\not \equiv 0$ is nonnegative and that $\Omega$ coincides with a conical body $C$ with tip $x_0\in \Omega$, radius $r_\Omega$, and width $\theta$, where $h(x_0) = \|h\|_{L^\infty(\Omega)}>0$.

In this case, by convexity of $C$ and continuity of $h$, the function $h$ is $1$-Lipschitz continuous with respect to $d$, which is the Euclidean metric $d_{Eucl}$ or $d_\HH$, respectively; cf.~Subsection \ref{subsec:induced_metrics}. Therefore, on~$\Omega$ the function $k:N\rightarrow \R$ defined by \begin{equation}\label{eq:kfct}
     k(x) \coloneqq \max \{0, h(x_0)-d(x,x_0)\}\,,\qquad x\in N\,,
 \end{equation} bounds $h$ from below. The function $k$ is radially symmetric with respect to $x_0$, bounded by $h(x_0)$, and supported on a ball of radius $h(x_0)$ 
 centered at $x_0\in C$. It follows that \begin{equation}
\label{eq:cone_ratio_bound}\alpha_n(\theta) 
 \|k\|_{L^1(N)}=  
 \|k\|_{L^1(C)}\leq \|h\|_{L^1(C)}\,. \end{equation} 
 We additionally need the estimate 
\begin{equation}
\label{eq:EuclHyp}\frac{\mathrm{Vol}(B_1^{Eucl})}{n+1} |h(x_0)|^{n+1}= \frac{\mathrm{Vol}(B^{Eucl}_{|h(x_0)|}) |h(x_0)|}{n+1}  \leq\|k\|_{L^1(N)} \, ,
\end{equation}	
which is an equality for $N=\R^n$ and which holds for $N=\HH^n$ as the volume of a Euclidean $(n-1)$-sphere of radius $r$ 
is bounded from above by the volume of a hyperbolic $(n-1)$-sphere of radius $r$. Hence, by \eqref{eq:cone_ratio_bound} and \eqref{eq:EuclHyp}, we deduce 
\[
			\|h\|^{n+1}_{L^\infty(C)} \leq c_n(\theta) \|h\|_{L^1(C)}\, .
\]

This concludes the first part of the proposition with $c_\Omega=c_n(\theta)$ and $h_\Omega=r_\Omega$ on $\R^n$ (and thus on $\HH^n$ up to an additional bi-Lipschitz constant). If $\Omega$ has a $C^1$-boundary, then $c_n(\theta)=c_n(1)$ is independent of $\Omega$, and thus also the second part of the proposition is proved with $c_\Omega=c_n(1)$. \end{proof}

To be able to apply Proposition \ref{prp:L1_Linfty_bound_Lipschitz_smooth}, we in addition need the following a priori bound on $\|h\|_{L^\infty(\Omega)}$ in terms of $\|h\|_{L^1(\Omega)}$. Here ``a priori'' is to be understood in the sense that in contrast  to Proposition~\ref{prp:L1_Linfty_bound_Lipschitz_smooth} we do not assume any smallness on $h$ beforehand.

\begin{lem}\label{lem:l1_to_sup}  
Let $\Omega \subset \HH^n$ be a compact (Lipschitz) domain.
Then there is a constant $c'_{\Omega}>0$ such that for every continuous function $h:\Omega \to \R$ that is locally $1$-Lipschitz continuous in the interior of $\Omega$, it holds that
\[
    \|h\|_{L^\infty(\Omega)}\min\{1,\|h\|_{L^\infty(\Omega)} \}^{n} \leq c'_{\Omega} \|h\|_{L^1(\Omega)}\,.
\]
\end{lem}

By Remarks on Proposition \ref{prp:L1_Linfty_bound_Lipschitz_smooth}, (i), this lemma also holds verbatim for general Riemannian manifolds, and it suffices to prove it for compact (Lipschitz) domains in $\R^n$.

\begin{proof} As pointed out above, we can equivalently prove the statement for compact domains $\Omega$ in $\R^n$. The statement follows immediately from the proof of the first part of Proposition~\ref{prp:L1_Linfty_bound_Lipschitz_smooth} if $\|h\|_{L^\infty(\Omega)}\leq h_\Omega$. 
Thus, assume that $\|h\|_{L^\infty(\Omega)}> h_\Omega$. 

By \cite[Theorem 1.2.2.2]{Gr85}, $\Omega$ satisfies the uniform interior cone condition with radius $r_\Omega\in (0,\min\{1,h_\Omega\})$ and width $\theta_\Omega \in (0,2)$. 
In particular, $x_0\in \Omega$ with $|h(x_0)|=\|h\|_{L^\infty(\Omega)}>r_\Omega$ is the tip of a conical body of radius $r_\Omega$ contained in $\Omega$. As $h$ is locally $1$-Lipschitz continuous, we have $|h| \geq \|h\|_{L^\infty(\Omega)}/2$ on a conical body $C_\Omega\subset \Omega$ of radius $r_{\Omega}/2$ so that 
$$ \frac{\|h\|_{L^\infty(\Omega)}}{2} \mathrm{Vol}(C_\Omega)\leq \|h\|_{L^1(\Omega)}\,.$$ Hence, the claim holds on $\R^n$ for $c'_{\Omega}= \max \{c_\Omega, 2\mathrm{Vol}(C_\Omega)^{-1}\}$ and $c_\Omega=c_n(\theta_\Omega)$ as in the proof of Proposition \ref{prp:L1_Linfty_bound_Lipschitz_smooth} (and thus on $\HH^n$ up to an additional bi-Lipschitz constant). \end{proof}

\begin{rem} 
The constants $c_\Omega$ and $h_\Omega$ in Proposition~\ref{prp:L1_Linfty_bound_Lipschitz_smooth} and $c'_\Omega$ in Lemma~\ref{lem:l1_to_sup} have to depend on the domain $\Omega$. Looking at larger and larger constant functions on a closed ball of radius $r$ shows that a bound on $h$ is necessary in Proposition \ref{prp:L1_Linfty_bound_Lipschitz_smooth} by scaling. Sending $r\rightarrow 0$ shows that this bound has to depend on $\Omega$ in view of $\|h\|_{L^1(\Omega)} \leq \mu(\Omega) \|h\|_{L^{\infty}(\Omega)}$. The dependence of $c_\Omega$ and $c_\Omega'$ on $\Omega$ becomes apparent by looking at functions $k_{x_0,h_0}(x)$, $h_0\in (0,1)$, as in Remarks on Proposition \ref{prp:L1_Linfty_bound_Lipschitz_smooth}, (ii), restricted to conical bodies $C_1(x_0,e_1,\theta)$ with $\theta \rightarrow 0$; cf.~proof of Claim 4 in Subsection~\ref{subsec:domain_optimality}. 
\end{rem}

We can finally prove the main result of this section.

\subsection{Proof of Theorem \ref{thm:C^0_stability}}

We first give some preliminary observations. By scale invariance, we can assume that $$d_J(O,S)^{n+1} = \sup_{v\in S} |v|^{n+1} = \frac{1}{n+1}\,.$$ We write $S=S_f$ for some function $f:\Omega=\pi(S)\to \R_+$ as in Lemma \ref{lem:graph}, where $f$ is locally Lipschitz and $\ln(f)$ is $1$-Lipschitz continuous with respect to $d_\Omega$. We set $g=f^{n+1}$ so that $V(C(S))= ||g||_{L^1(\Omega)}/(n+1)$. 
Note that $g$ is locally $1$-Lipschitz continuous with respect to $d_\Omega$ as $$ |g(x)-g(y)|\leq (n+1)\|f\|^n_{L^\infty(\Omega)} |f(x)-f(y)|\leq  (n+1)d_J(O,S)^{n+1} d_\Omega(x,y)= d_\Omega(x,y)\,,$$ for every $x,y\in \Omega$. Here we used Lemma \ref{lem:graph}, (i), in the form $\|\nabla f\|_{L^\infty(\Omega)}\leq \|f\|_{L^\infty(\Omega)}=d_J(O,S)$, the fact that $f$ is $d_J(O,S)$-Lipschitz continuous with respect to $d_\Omega$ (see Subsection~\ref{subsec:induced_metrics}), and the previous normalization. As we see next, this property of $g$ is essential to exploit the previous lemmas.

We prove both parts of the theorem simultaneously. Assume first that $A_F(S)\leq \delta_M$ with \begin{equation}\label{eq:choicedelM}
  \delta_M\coloneqq \frac 1 {c'_{\Omega}V(B_1(M))}\left(\min \left\{ h_\Omega,\frac 1{4(n+1)}\right\}\right)^{n+1}\,,
\end{equation} where
$h_\Omega$ is given as in Proposition~\ref{prp:L1_Linfty_bound_Lipschitz_smooth} and $c_\Omega'$ as in Lemma~\ref{lem:l1_to_sup}.
Let us write $d^*= d_J(O,S)$ and $d_*= \inf |S|$. Choose $t\in [d_*,d^*]$ so that $ \|g-t^{n+1}\|_{L^1(\Omega)} = (n+1)V(C(S)) A_F(S)$.
Since \begin{equation}
\label{eq:volbound}(n+1)V(C(S)) \leq (d^*)^{n+1} \mu(\Omega) =  V(B_1(M))\,,
\end{equation} 
 we find
$\|g-t^{n+1}\|_{L^1(\Omega)} 
\leq V(B_1(M)) \delta_M 
$ by assumption.
Then Lemma \ref{lem:l1_to_sup} implies \begin{equation}\label{eq:r_Omega_n_bound}
    \|g-t^{n+1}\|_{L^\infty(\Omega)}\leq \min \left\{ h_\Omega,\frac 1{4(n+1)}\right\}.
\end{equation}
 The first part of Proposition \ref{prp:L1_Linfty_bound_Lipschitz_smooth} now yields
\begin{equation}\label{eq:Linftydelta}
    \|g-t^{n+1}\|_{L^\infty(\Omega)}^{n+1} \leq c_\Omega \|g-t^{n+1}\|_{L^1(\Omega)}\,.
\end{equation}
 Moreover, because of $(d^*)^{n+1}-(d_*)^{n+1} \leq 2 \|g-t^{n+1}\|_{L^\infty(\Omega)}$, the dimensional upper bound in \eqref{eq:r_Omega_n_bound} gives $1 \leq 2(n+1)d_*^{n+1}$. Using this bound, we can estimate
\begin{equation*}
    \|f-t\|_{L^\infty(\Omega)}
\leq
(2(n+1))^{\frac{n}{n+1}}d_*^{n} \|f-t\|_{L^\infty(\Omega)}
         \leq \left(\frac{2^n}{n+1}\right)^{\frac{1}{n+1}}\|g-t^{n+1}\|_{L^\infty(\Omega)} \, .
\end{equation*}
Combining the latter estimate with \eqref{eq:AH_funct}, \eqref{eq:Linftydelta}, the definition of $t$, 
and \eqref{eq:volbound} now leads to
\begin{equation}\label{eq:AHAFfinal}
	A_{H}(S)^{n+1} \leq (n+1) \|f-t\|_{L^\infty(\Omega)}^{n+1} 
    \leq 2^n c_\Omega \|g-t^{n+1}\|_{L^1(\Omega)}\leq 2^n c_\Omega V(B_1(M))A_F(S)\,.
\end{equation}
 This proves the first part of the theorem for $c_M=2^nc_\Omega V(B_1(M))$ and $A_F(S)\leq \delta_M$. 
 If $A_F(S)> \delta_M$, we immediately find a bound with $c_M=(2^{n+1}\delta_M)^{-1}$ using $A_H(S)\leq 1/2$.
 
For the second part of the theorem, we assume $M$ to be $C^1$-regular.

By \eqref{eq:r_Omega_n_bound} we can then apply the second part of Proposition~\ref{prp:L1_Linfty_bound_Lipschitz_smooth} so that $c_\Omega$ in \eqref{eq:Linftydelta} is a dimensional constant. If $A_F(S)\leq \delta_M$, this leads to a constant $c_M=c_nV(B_1(M))$ in \eqref{eq:AHAFfinal}, as claimed.
 
Sharpness of the exponent and domain dependence is discussed in Subsection \ref{subsec:Linf_optimality} and Subsection
\ref{subsec:domain_optimality}, respectively.
\hfill\qed

\subsection{Sharpness of the exponent} 
\label{subsec:Linf_optimality}
In this subsection we show that the exponent $n+1$ in Theorem \ref{thm:C^0_stability} is sharp, that is, there is a family $S_\varepsilon$ of Cauchy hypersurfaces of a \emph{fixed} smooth 
spatially compact conical Minkowski spacetime such that
\begin{equation}\label{eq:optdim1}
   \lim_{\varepsilon\to 0}A_H(S_{\varepsilon})= 0 \quad \text{and} \quad \limsup_{\varepsilon\to 0} \frac{A_F(S_\varepsilon)}{A_H(S_\varepsilon)^{n+1}}<\infty\,.
\end{equation} 
Based on the construction in Remarks on Proposition \ref{prp:L1_Linfty_bound_Lipschitz_smooth}, (ii), we define\begin{equation} \label{eq:kappa_eps}\kappa_{\varepsilon}\coloneqq 1+k_{x_0,\varepsilon} 
\end{equation} with $\varepsilon<1/2$ and $x_0\in \HH^n$. This function describes a Cauchy hypersurface $S_{\varepsilon}=S_{\kappa_{\varepsilon}}$ of the smooth spatially compact conical Minkowski spacetime $M=\R_+ \bar B_1(x_0)$. Indeed, as $\ln$ on $[1,\infty)$ and $k_{x_0,\varepsilon}$ are $1$-Lipschitz continuous, $\ln(\kappa_\varepsilon)$ is $1$-Lipschitz continuous, which implies that $S_{\kappa_\varepsilon}$ is a Cauchy hypersurface of $M$ by Lemma~\ref{lem:cauchy_graph}. Note that $d_\Omega$ and $d_\HH$ agree again as the domain is convex. Clearly, $A_H(S_\varepsilon)= \varepsilon (2(1+\varepsilon))^{-1}\in( \varepsilon/3, \varepsilon)$ as $\varepsilon<1/2$. Taking $t=1$ as competitor in~\eqref{eq:AF_tilde}, we observe that
\begin{equation}
    \label{eq:AH_AF}A_F(S_\varepsilon)\leq \frac{2^{n+1}\|k_{x_0,\varepsilon}\|_{L^1(\HH^n)}}{V(B_1(M))}\leq  \frac{c_n'\varepsilon^{n+1}}{V(B_1(M))}
\end{equation}
for some dimensional constant $c_n'>0$, where we used Lemma \ref{lem:asymmetry_comparsion} and the elementary inequality $(1+a)^{n+1}-1\leq (n+1) 2^n a$ for $0\leq a\leq 1$. This shows \eqref{eq:optdim1}. Since $A_F(S_\varepsilon)$ can be chosen to be smaller than a given $\delta_M$, sharpness of the exponent $n+1$ follows as claimed. 

We note that families of the form $\kappa_\varepsilon$ do not establish sharpness for the Hausdorff stability results in Corollary \ref{cor:C0}. Indeed, they only show that the exponent of the Hausdorff asymmetry needs to be at least  $n$ in case of the CM-inequality and at least $n/2$ in case of the BE-inequality and the refined CM-inequality.

\subsection{Optimal domain dependence} 
\label{subsec:domain_optimality}
In this subsection we study optimal dependence on the domain in four different respects. As they always involve the boundary of the domain, they only make sense for $n\geq 2$. Our first two objectives are to show that the boundedness and Lipschitz regularity assumptions on the domain in Theorem \ref{thm:C^0_stability} are necessary.
\vspace{0.2cm}

\noindent\textit{Claim 1.}~The assumption that the domain has a Lipschitz boundary is sharp, that is, the first part of Theorem
\ref{thm:C^0_stability} fails for Cauchy hypersurfaces over H\"older domains, i.e.~generalized domains in $\HH^n$ with mere $C^{0,\alpha}$-boundary, $0<\alpha<1$. This means that there is a family $S_\varepsilon$ of Cauchy hypersurfaces of a \emph{fixed} spatially compact conical Minkowski spacetime over a H\"older domain such that 
\begin{equation}\label{eq:optdim2}
   \limsup_{\varepsilon\to 0}A_H(S_{\varepsilon})<\infty \quad \text{and} \quad \lim_{\varepsilon\to 0} \frac{A_F(S_\varepsilon)}{A_H(S_\varepsilon)^{n+1}}=0\,.
\end{equation}

\begin{proof}[Proof of Claim 1] 
   Consider for the domain $\Omega$ the closure of a unit ball $ B_1(x_1)$, $x_1\in \HH^n$, with an attached outward $C^{0,\alpha}$-cusp, $0<\alpha<1$, with tip  at $x_0\in \partial B_2(x_1)$. We further consider the Cauchy hypersurfaces $S_\varepsilon$ of $\R_+\Omega$ described by $\kappa_{\varepsilon}$ as in \eqref{eq:kappa_eps}. Then for every $\varepsilon \in (0,1/2)$ there is some $r>0$ and some $e_1\in T_{x_0} \HH^n$ such that $\Omega \cap B_r(x_0) \subset C_r(x_0,e_1,\varepsilon)$. Hence, we find by \eqref{eq:area_ratio} that
$$ \|k_{x_0,\varepsilon}\|_{L^1(\Omega)}\leq \|k_{x_0,\varepsilon}\|_{L^1(C_r(x_0,e_1,\varepsilon))}=\alpha_n(\varepsilon)\|k_{x_0,\varepsilon}\|_{L^1(\HH^n)}\,. 
$$ 
 Then $A_H(S_\varepsilon)\in( \varepsilon/3, \varepsilon)$ as in Subsection \ref{subsec:Linf_optimality}, and by the argument leading to \eqref{eq:AH_AF} we have \begin{equation}\label{eq:AF_Calpha}
A_F(S_\varepsilon)\leq
     \frac{ c_n'\alpha_n(\varepsilon)\varepsilon^{n+1}}{V(B_1(\R_+S_\varepsilon))} \,.
    \end{equation} As $V(B_1(\R_+S_\varepsilon))\geq V(B_1(\R_+B_1))$ and $\alpha_n(\varepsilon)\to 0$ as $\varepsilon\to0$, this concludes \eqref{eq:optdim2}. 
    \end{proof}

\noindent\textit{Claim 2.}~The assumption that the domain is bounded is necessary, that is, there is a family $S_\varepsilon$ of Cauchy hypersurfaces of a \emph{fixed} smooth conical Minkowski spacetime over an unbounded domain of finite volume such that \eqref{eq:optdim2} holds.

\begin{proof}[Proof of Claim 2] 
     Consider for the domain $\Omega$ the closure of the unit ball $ B_1(x_0)$ in $\HH^n$ with an unbounded tentacle of finite volume. By assumption, we must have $\|k_{y,\varepsilon}\|_{L^1(\Omega)}\to 0$ for fixed $\varepsilon>0$ and $y\in \Omega$ traveling along a tentacle. Hence, we can choose $y_{\varepsilon}\in \Omega$ such that $$ \|k_{y_{\varepsilon},\varepsilon}\|_{L^1(\Omega)}\leq \varepsilon \|k_{y_{\varepsilon},\varepsilon}\|_{L^1(\HH^n)}\,.$$ 
     
     We further consider Cauchy hypersurfaces $S_{\varepsilon}$ of $\R_+\Omega$ described by $\kappa_{\varepsilon}$ as in \eqref{eq:kappa_eps} with $x_0=y_{\varepsilon}$. By construction, we find $A_H(S_{\varepsilon})\in( \varepsilon/3, \varepsilon)$ and $$A_F(S_{\varepsilon})\leq \frac{c_n'\varepsilon^{n+2}} {V(B_1(\R_+\Omega))}$$
     which concludes \eqref{eq:optdim2}. Here we used the estimates leading to \eqref{eq:AH_AF} again.
   \end{proof}

The other two goals are concerned with the $C^1$-regular setting, that is, the second part of Theorem~\ref{thm:C^0_stability}. 
Recall that in this setting we have to assume $A_F(S)$ to be smaller than $\delta_M$ as given in \eqref{eq:choicedelM}. Our third objective is to show that the dependence of $c_M$ on $V(B_1(M))$ is optimal. 
\vspace{0.2cm}

\noindent\textit{Claim 3.}~The dependence of $c_M$ on $V(B_1(M))$ in Theorem~\ref{thm:C^0_stability} is sharp, that is, there is a family $S_\varepsilon$ of Cauchy hypersurfaces of \emph{non-fixed} smooth spatially compact conical Minkowski spacetimes such that $A_F(S_{\varepsilon})\leq \delta_{\R_+S_\varepsilon}$ for all $\varepsilon$ sufficiently small,\begin{equation}
    \label{eq:optdim3}
\lim_{\varepsilon\to 0}A_H(S_{\varepsilon})=0\,,\quad \text{and} \quad\limsup_{\varepsilon\to0}\frac{A_F(S_\varepsilon)}{A_H(S_\varepsilon)^{n+1}}V(B_1(\R_+S_\varepsilon))<\infty\,.\end{equation}

\begin{proof}[Proof of Claim 3]   Consider the Cauchy hypersurfaces $S_\varepsilon$ of  $\R_+\bar B_{1/\varepsilon}(x_0)$ described by $\kappa_{\varepsilon}$ as in~\eqref{eq:kappa_eps}. Similarly to Subsection \ref{subsec:Linf_optimality}, we observe that $A_H(S_\varepsilon)\in( \varepsilon/3, \varepsilon)$ and $$A_F(S_\varepsilon)\leq 
     \frac{ c_n'\varepsilon^{n+1}}{V(B_1(\R_+S_\varepsilon))}\,,$$ and thus \eqref{eq:optdim3} holds. Note that $\delta_{\R_+S_\varepsilon}=\delta_{\R_+\bar B_{1/\varepsilon}(x_0)}$ is (up to a dimensional factor) bounded from below by $V(B_1(\R_+\bar B_{1/\varepsilon}(x_0)))^{-1}=V(B_1(\R_+S_\varepsilon))^{-1}$; cf.~proof of Corollary \ref{cor:asymptotic_stability} in Subsection~\ref{subsec:asymptotic}. 
     Hence, by the previous estimate, $A_F(S_{\varepsilon})\leq \delta_{\R_+S_\varepsilon}$ for all $\varepsilon$ sufficiently small. 
\end{proof}

Our fourth objective is to show that the additional regularity assumption on the boundary of the domain is necessary.
\vspace{0.2cm}

\noindent\textit{Claim 4.}~The assumption that the domain has a $C^1$-boundary is sharp, that is, the addendum for $C^1$-regular $M$ in Theorem \ref{thm:C^0_stability} fails for (Lipschitz) domains. This means that there is a family $S_\varepsilon$ of Cauchy hypersurfaces of \emph{non-fixed}  spatially compact conical Minkowski spacetimes over (Lipschitz) domains such that $A_F(S_{\varepsilon})\leq \delta_{\R_+S_\varepsilon}$ for all $\varepsilon$ sufficiently small,\begin{equation}
    \label{eq:optdim4}
 \limsup_{\varepsilon\to 0}A_H(S_{\varepsilon})<\infty\,, \quad \text{and}\quad \lim_{\varepsilon\to0}\frac{A_F(S_\varepsilon)}{A_H(S_\varepsilon)^{n+1}}V(B_1(\R_+S_\varepsilon))=0\,.\end{equation} 

\begin{proof}[Proof of Claim 4]  Consider the Cauchy hypersurfaces $S_\varepsilon$ of $\R_+\Omega_\varepsilon$ described by $\kappa_{\varepsilon}$ as in \eqref{eq:kappa_eps}, where $\Omega_\varepsilon= C_1(x_0,e_1,\varepsilon)$ is  
defined as in Subsection \ref{subsec:prel_bdd}.  
This gives $A_H(S_\varepsilon)\in( \varepsilon/3, \varepsilon)$ and \eqref{eq:AF_Calpha}, which implies \eqref{eq:optdim4}. Observe that $\delta_{\R_+S_\varepsilon}=\delta_{\R_+\Omega_\varepsilon}$ as given in \eqref{eq:choicedelM} is uniformly bounded from below in $\varepsilon$ in this construction. Indeed, the uniform interior cone condition holds for cones $\Omega_\varepsilon'= \exp_{x_0}^{-1}(C_1(x_0,e_1,\varepsilon))$ in $\R^n$ with uniform $r_{\Omega_\varepsilon'}=1/3$ and $\theta_{\Omega_\varepsilon'}=\varepsilon$ for sufficiently small $\varepsilon$ by simple geometric considerations. Since the differential of the diffeomorphism $\exp_{x_0}: T_{x_0} \HH^n \rightarrow \HH^n$ is uniformly continuous on the unit ball bundle of $T_{x_0} \HH^n$, the images of those uniform interior cones contain geodesic cones in $\HH^n$ with a uniform radius and a width that is proportional to $\varepsilon$. Hence, $V(C_{\Omega_\varepsilon})$ from the proof of Lemma~\ref{lem:l1_to_sup} and $V(B_1(\R_+\Omega_\varepsilon))$ scale the same in $\varepsilon$, so $\delta_{\R_+S_\varepsilon}$ is bounded from below uniformly in $\varepsilon$.
\end{proof}

\subsection{Quantitative stability for the Milne universe}\label{subsec:asymptotic} 

A simple cosmological model with a big bang is the Milne universe $M=I^+(O)=\R_+ \HH^n$; see \cite{Mi35}. Although this globally hyperbolic conical Minkowski spacetime is not defined over a domain of finite measure, we can still formulate an asymptotic stability result for it.

For a Cauchy hypersurface $S$ of $M$, we consider an exhaustion of $S$ by compact achronal Lipschitz hypersurfaces $S^{(j)}\coloneqq S\cap M^{(j)}$, $j\in \N$, in conical Minkowski spacetimes $M^{(j)}\coloneqq\R_+ \bar B_j(x_0)$ for some fixed base point $x_0\in \HH^n$. We set $$A_F(S) \coloneqq  \liminf_{j \to \infty}  A_F(S^{(j) })\qquad \text{and}\qquad \delta(S) \coloneqq \liminf_{j\to \infty} \delta(S^{(j)})$$ for $\delta\in\{\delta_{BE}^{1/2},\delta_{CM},(\delta_{CM}^*)^{1/2}\}$. Note that these definitions a priori depend on the chosen base point. Nevertheless, 
if $\mu(\pi(S)) < \infty$ and $V(C(S))<\infty$, 
the limits inferior $\delta(S)$ and $A_F(S)$ are proper limits and given by the usual formulas. With these definitions, we obtain natural extensions of Theorem \ref{thm:BE_L1_stability}, Corollary~\ref{cor:CM_L1_stability}, and Corollary~\ref{cor:CM_refined_L1_stability} to non-compact domains, which can be summarized as follows.

\begin{cor}\label{cor:asympt_Fr} There is an explicitly computable constant $c_n>0$ such that for every Cauchy hypersurface $S \subset I^+(O)$ we have
$A_F(S)
            \leq c_n \delta(S)$.
\end{cor}

However, neither $\delta(S)=0$ nor $A_F(S)=0$ imply that $S$ is contained in a hyperboloid anymore. To recover this rigidity, we work with the Hausdorff asymmetry $A_H$ and the rescaled deficit $\tilde \delta$, which we define as limits through
$$A_H(S) \coloneqq  \lim_{j \to \infty}  A_H(S^{(j) })\qquad \text{and}\qquad \tilde \delta(S) \coloneqq  \liminf_{j\to \infty}  V(B_1(M^{(j)})) \delta(S^{(j)})\,.
$$ The first limit always exists in $[0,\infty]$ since \begin{equation}\label{eq:AHreform}
    A_H(S^{(j)})= \frac 12\left(1-\frac {\inf|S^{(j)}|}{d_J(O,S^{(j)})}\right)
\end{equation} is increasing in $j$. In fact, $A_H$ could also be defined in the same way as in the compact case since the assumption to be spatially compact is not necessary for $d_J$ to be a metric; see \cite{LP26a}. Even in the non-compact case, $A_H(S)=0$ implies that $S$ is contained in a hyperboloid.

Note that in contrast to Corollary \ref{cor:C0} we do not require smallness of $\tilde \delta$ in Corollary \ref{cor:asymptotic_stability}. To drop this condition, we have to show that $V(B_1(M^{(j)}))\delta_{M(j)}$ is bounded from below uniformly in $j$.

\begin{proof}[Proof of Corollary \ref{cor:asymptotic_stability}] Recall the definition of $\delta_M$  in \eqref{eq:choicedelM}. For $\Omega=\bar B_j(x_0)$ and sufficiently large $j \in \N$, we can choose $r_{\Omega}\in (0,1)$ large enough and $\theta_\Omega=1$ in the uniform interior cone condition such that $h_\Omega\geq (4(n+1))^{-1}$, $c_n^*\coloneqq c_\Omega' \geq c_\Omega=c_n(1)$, and $\delta_{M^{(j)}}=(c_n^*V(B_1(M^{(j)}))(4(n+1))^{n+1})^{-1}$ with $M^{(j)}=\R_+\bar B_j(x_0)$; cf.~the proof of Proposition \ref{prp:L1_Linfty_bound_Lipschitz_smooth} and Lemma \ref{lem:l1_to_sup}. Note that the proof of Lemma \ref{lem:l1_to_sup} works on $\HH^n$ the same way here thanks to Lemma \ref{lem:uniform_in-radius}, so $c_n^*=c_\Omega'$ can be chosen to depend only on the dimension and not on the bi-Lipschitz constant.

By Theorem \ref{thm:BE_L1_stability}, Corollary \ref{cor:CM_L1_stability}, and Corollary \ref{cor:CM_refined_L1_stability}, there is a $\kappa_n>0$ such that $\delta(S^{(j)})<\kappa_n\delta_{M^{(j)}}$ implies $A_F(S^{(j)})\leq \delta_{M^{(j)}}$. If $\delta(S^{(j)})<\kappa_n\delta_{M^{(j)}}$, then by \eqref{eq:HF_iso} with
$c_{M^{(j)}}=c_nV(B_1(M^{(j)}))$ we have \begin{equation}\label{eq:est1}
			A_{H}(S^{(j)})^{n+1} \leq c_nV(B_1(M^{(j)}))\delta(S^{(j)}) \, .
\end{equation} 
If $\delta(S^{(j)})\geq \kappa_n
\delta_{M^{(j)}}$, then by \eqref{eq:AHreform} we know that $A_H(S^{(j)})\leq 1/2$, so
\begin{equation}\label{eq:est2}
    A_H(S^{(j)})^{n+1}\leq \frac {(4(n+1))^{n+1}c_n^*}{2^{n+1}\kappa_n} V(B_1(M^{(j)}))\delta(S^{(j)})\,.
\end{equation}
Combining \eqref{eq:est1} and \eqref{eq:est2} and taking $\liminf$ proves the claim as $(n+1)V(B_1(M^{(
j)}))=\mu(B_j(x_0))$.
\end{proof}

\appendix

\section{Complementary stability results}
\label{app:compl_pf} 

Here we compare our stability constant from Theorem \ref{thm:BE_L1_stability} with the stability constants obtained through quantitative H\"older inequalities from \cite{CFL14, Al08}. To this end, let $S$, $M$, $f$, $g$, $\bar g$, and $\phi$ be as used in Subsection~\ref{subsec:func_pf}. Assume that $ V(C(S)),V(C(\pi(S)))\in (0,\infty)$ holds similarly to the second proof of Theorem~\ref{thm:Bahn_Ehrlich_gen}.

Applying \cite[Theorem~3.1]{CFL14} for $(X, \mu)=(\pi(S),\mu)$, $p=n+1$, and the unit vector $(g/\|g\|_{L^{1}(\pi(S))})^{n/(n+1)}$ in $L^{(n+1)/n}(\pi(S))$ gives 
\begin{equation*}
\left|\int_{\pi(S)} \frac{1}{\mu(\pi(S))^{\frac 1 {n+1}}}\frac{g^{\frac n{n+1}}}{\|g\|^{\frac n{n+1}}_{L^{1}(\pi(S))}}\,\mathrm d\mu \right|\leq 1- \frac 1{\beta_1(r)}\left\|\frac{1}{\mu(\pi(S))^{\frac 1r}}-\left(\frac{g}{\|g\|_{L^{1}(\pi(S))}}\right)^{\frac 1r}\right\|_{L^{r}(\pi(S))}^{\max\{r,2\}}\,,
\end{equation*} for $r\in \{(n+1)/n,n+1\}$,
where
$\beta_1((n+1)/n)\coloneqq  4n$ and $\beta_1(n+1)\coloneqq (n+1)2^n$. Similarly, applying the second inequality of \cite[Theorem 2.2]{Al08} for $(X,\mu)=(\pi(S), \mu)$, $p=(n+1)/n$, and $g^{n/(n+1)}$ in $L^{(n+1)/n}(\pi(S))$ gives an estimate of the same form with $r=2$ and $\beta_1(2)\coloneqq(n+1)$.

We recall the definition of the Bahn--Ehrlich deficit $\delta_{BE}$ in \eqref{eq:BE_ineq}. Rearranging the terms yields
\begin{align}\notag
\frac 1{\beta_1(r)}\left\|\frac{1}{\mu(\pi(S))^{\frac 1r}}-\left(\frac{g}{\|g\|_{L^{1}(\pi(S))}}\right)^{\frac 1r}\right\|_{L^{r}(\pi(S))}^{\max\{r,2\}}&\leq 1-\frac{\int_{\pi(S)} g^{\frac n{n+1}}\,\mathrm d\mu}{\mu(\pi(S))^{\frac 1 {n+1}}\|g\|^{\frac n{n+1}}_{L^{1}(\pi(S))}} \\&\leq 1-\frac{1}{1+\delta_{BE}(S)}\leq \delta_{BE}(S)\,.\label{eq:prelBEstab1}
\end{align}

Note that the Fraenkel asymmetry 
$$A_F(S)=\left\|\frac{1}{\mu(\pi(S))}-\frac{g}{\|g\|_{L^{1}(\pi(S))}}\right\|_{L^{1}(\pi(S))}$$ is 
the distance on the left side of \eqref{eq:prelBEstab1} with $r=1$. Hence, it makes sense to denote the distance in \eqref{eq:prelBEstab1} by $$A_F^{(r)}(S)\coloneqq \left\|\frac{1}{\mu(\pi(S))^{\frac 1r}}-\left(\frac{g}{\|g\|_{L^{1}(\pi(S))}}\right)^{\frac 1r}\right\|_{L^{r}(\pi(S))}.$$ Moreover, this distance is on normalized functions at least as strong as the Fraenkel asymmetry. Indeed, using the simple consequence of the mean value theorem $$|a^r-b^r|\leq r(a^{r-1}+b^{r-1})|a-b| $$ with $a,b\geq 0$ and $ r>1$ and H\"older's inequality, it follows that
$$A_F(S)\leq2r A_F^{(r)}(S)\,,$$ and hence
$$A_F(S)^{\max\{r,2\}}\leq \beta_2(r)\delta_{BE}(S)$$
with $r\in \{(n+1)/n,n+1,2\}$, where \begin{equation*}
    \beta_2\left(\frac{n+1}n\right)\coloneqq 16\frac{(n+1)^2}{n} \,, \qquad\beta_2(n+1)\coloneqq(n+1)^{n+2}2^{2n+1}\,, \qquad\beta_2(2)\coloneqq 16(n+1)\,.
\end{equation*} 
For $r=(n+1)/n$ and $r=2$, the exponent of the Fraenkel asymmetry is optimal. 
These stability constants should be compared with the constant $2(n+1)^2/n$ in Theorem \ref{thm:BE_L1_stability}.
More generally, it holds that $$A_F^{(t)}(S)\leq 2\frac{r}{t}A_F^{(r)}(S)$$ for $t\leq r$, leading to some immediate stability results with geometry-independent constant with respect to an $L^t$-distance of $(g/\|g\|_{L^1(\pi(S))})^{1/t}$ for $t<n+1$.


\begin{thebibliography}{[DEFL23]}


\bibitem[ACKW09]{ACKW09} F.~Abedin, J.~Corvino, S.~Kapita, and H.~Wu, \textit{On isoperimetric surfaces in general relativity, II}, J. Geom. Phys. \textbf{59} (2009), no.~11, 1453--1460.

\bibitem[Al08]{Al08} J.~M.~Aldaz, \textit{A stability version of Hölder's inequality}, J. Math. Anal. Appl. \textbf{343} (2008), no. 2, 842--852.


\bibitem[AGKS23]{AGKS23} S.~B.~Alexander, M.~Graf, M.~Kunzinger and C.~Sämann, \textit{Generalized cones as Lorentzian length spaces: causality, curvature, and singularity theorems}, Comm. Anal. Geom. \textbf{31} (2023), no. 6, 1469--1528.

\bibitem[BE99]{BE99}  H.~Bahn and P.~Ehrlich, \textit{A Brunn-Minkowski type theorem on the Minkowski spacetime}, Canad. J. Math. \textbf{51} (1999), no. 3, 449--469.

\bibitem[Ba99]{Ba99}  H.~ Bahn, \textit{Isoperimetric inequalities and conjugate points on Lorentzian surfaces}, J. Geom. \textbf{65} (1999), no.~1-2, 31--49.

\bibitem[BB61]{BB61} E.~Beckenbach and R.~Bellman, \textit{Inequalities}, Springer-Verlag, Berlin (1961).


\bibitem[Be05]{Be05} F.~Bernstein, \textit{Über die isoperimetrische Eigenschaft des Kreises auf der Kugeloberfl\"ache und in der Ebene}, Math. Ann. \textbf{60} (1905), 117--136.


\bibitem[BE91]{BE91}
	G.~Bianchi and H.~Egnell, \emph{A note on the Sobolev inequality}, J.
	Funct. Anal. \textbf{100} (1991), no.~1, 18--24.

\bibitem[Bo24]{Bo24} T. Bonnesen, \textit{\"Uber das isoperimetrische Defizit ebener Figuren}, Math. Ann. \textbf{91} (1924), 252--268.

\bibitem[BC04]{BC04} H.~L.~Bray and P.~T.~Chruściel, \textit{The Penrose Inequality}, In:
The Einstein Equations and the Large Scale Behavior of Gravitational Fields. 50 Years of the Cauchy Problem
in General Relativity, Birkhäuser, Basel (2004).

\bibitem[BE13]{BE13} S.~Brendle and M.~Eichmair, \textit{Isoperimetric and Weingarten surfaces in the Schwarzschild manifold}, J. Differential Geom. \textbf{94} (2013), no.~3, 387--407. 


\bibitem[BDS24]{BDS24}
G.~Brigati, J.~Dolbeault, and N.~Simonov, \emph{Logarithmic {S}obolev and
  interpolation inequalities on the sphere: Constructive stability results},
  Ann. Inst. H. Poincar{\'e} C Anal. Non Lin{\'e}aire \textbf{41} (2024),
  no.~5, 1289--1321.

\bibitem[BBI01]{BBI01} D.~Burago, Y.~Burago, and S.~Ivanov, \emph{A course in metric geometry}, Graduate Studies in Mathematics \textbf{33}, AMS, Providence RI (2001).

\bibitem[BG25]{BG25} A.~Burtscher and L.~García-Heveling, \textit{Time Functions on Lorentzian Length Spaces}, Ann. Henri Poincaré \textbf{26} (2025), 1533--1572.

\bibitem[CFL14]{CFL14}
E.~A. Carlen, R.~L. Frank, and E.~H. Lieb, \emph{{S}tability estimates for the
  lowest eigenvalue of a {S}chrödinger operator}, Geom. Funct. Anal.
  \textbf{24} (2014), 63--84.

\bibitem[CD+19]{CD+19}
J.~A.~Carrillo, M.~G.~Delgadino, J.~Dolbeault, R.~L.~Frank, and F.~Hoffmann, \emph{Reverse Hardy–Littlewood–Sobolev inequalities}, J. Math. Pures Appl. \textbf{132} (2019), 133--165.

\bibitem[CM22]{CM22} F.~Cavalletti and A.~Mondino, \textit{A review of Lorentzian synthetic theory of timelike Ricci curvature bounds}, Gen. Relativity Gravitation \textbf{54} (2022), no.~137.

\bibitem[CM25]{CM25}  F.~Cavalletti and A.~Mondino, \textit{A sharp isoperimetric-type inequality for Lorentzian spaces satisfying timelike Ricci lower bounds}, Preprint (2025), arXiv:2401.03949v2.


\bibitem[Ch01]{Ch01} I.~Chavel, \textit{Isoperimetric Inequalities}, Cambridge Tracts in Mathematics \textbf{145}, Cambridge University Press, Cambridge (2001).

\bibitem[CESY21]{CESY21}  O.~Chodosh, M.~Eichmair, Y.~Shi, and H.~Yu, \textit{Isoperimetry, scalar curvature, and mass in asymptotically flat Riemannian 3-manifolds}, Comm. Pure Appl. Math. \textbf{74} (2021), no.~4, 865--905.

\bibitem[CL12]{CL12}  M.~Cicalese and G.~P.~Leonardi, \textit{A Selection Principle for the Sharp Quantitative Isoperimetric Inequality}, Arch. Ration. Mech. Anal. \textbf{206} (2012), 617--643.

\bibitem[DE+25]{DE+25}
J.~Dolbeault, M.~J.~Esteban, A.~Figalli, R.~L.~Frank, and M.~Loss, \emph{Sharp stability for {S}obolev and log-{S}obolev inequalities,
  with optimal dimensional dependence}, Camb. J. Math. \textbf{13} (2025),
  no.~2, 359--430.


\bibitem[EM13]{EM13} M.~Eichmair and J.~Metzger, \textit{Unique isoperimetric foliations of asymptotically flat manifolds in all dimensions}, Invent. Math. \textbf{194} (2013), no.~3, 591--630.

\bibitem[EG15]{EG15} L.~C.~Evans and R.~F.~Gariepy, \textit{Measure Theory and Fine Properties of Functions. Revised Edition}, CRC Press, New York (2015).

\bibitem[Fi13]{Fi13} A.~Figalli, \textit{Stability in Geometric and Functional Inequalities}, In: European Congress of Mathematics, Eur. Math. Soc., Z\"urich (2013), 585--599.

\bibitem[FI13]{FI13}
A. Figalli and E. Indrei, \textit{A Sharp Stability Result for the Relative Isoperimetric Inequality Inside Convex Cones}, J. Geom. Anal. \textbf{23} (2013), 938--969. 

\bibitem[FJ17]{FJ17}
A.~Figalli and D.~Jerison, \emph{Quantitative stability for the
  {B}runn-{M}inkowski inequality}, Adv. Math. \textbf{314} (2017), 1--47.

\bibitem[FMM18]{FMM18}
A.~Figalli, F.~Maggi, and C.~Mooney, \emph{The sharp quantitative {E}uclidean
  concentration inequality}, Camb. J. Math. \textbf{6} (2018), no.~1, 59--87.

\bibitem[FMP10]{FMP10}  A.~Figalli, F.~Maggi, and A.~Pratelli, \textit{A mass transportation approach to quantitative
isoperimetric inequalities}, Invent. Math. \textbf{182} (2010), 167--211.

\bibitem[FZ22]{FZ22}
A.~Figalli and Y.~R.-Y. Zhang, \emph{Sharp gradient stability for the {S}obolev
  inequality}, Duke Math. J. \textbf{171} (2022), no.~12, 2407--2459.

\bibitem[FZ23]{FZ23} A.~Figalli and Y.~R.-Y.~Zhang, \textit{Strong stability of convexity with respect to the perimeter}, Preprint (2023), arXiv:2307.01633.

\bibitem[Fr22]{Fr22}
R.~L.~Frank, \emph{Degenerate stability of some {S}obolev inequalities}, Ann.
  Inst. H. Poincar{\'e} C Anal. Non Lin{\'e}aire \textbf{39} (2022), no.~6,
  1459--1484.

\bibitem[Fr24]{Fr24}
 R.~L.~Frank, \emph{The sharp Sobolev inequality and its stability: An introduction}, In: Geometric and Analytic
Aspects of Functional Variational Principles: Cetraro, Italy 2022, Springer, Cham (2024), 1--64.

\bibitem[FKT22]{FKT22}
	R.~L.~Frank, T.~König, and H.~Tang, \emph{Reverse conformally invariant Sobolev inequalities on the sphere}, J. Funct. Anal. \textbf{282} (2022), no.~4, 109339. 

    \bibitem[FP24a]{FP24a}
R.~L.~Frank and J.~W.~Peteranderl, \emph{Degenerate stability of the {C}affarelli--{K}ohn--{N}irenberg inequality along the {F}elli--{S}chneider
  curve}, Calc. Var. Partial Differential Equations \textbf{63} (2024), no.~44.

\bibitem[FP24b]{FP24b} R.~L.~Frank and J.~W.~Peteranderl, \emph{The sharp $\sigma_2$-curvature inequality on the sphere in
  quantitative form}, Preprint (2024), arXiv:2412.12819.

\bibitem[FPR25]{FPR25} R.~L.~Frank, J.~W.~Peteranderl, and L.~Read, \emph{Sharp quantitative integral inequalities for harmonic extensions}, Preprint (2025), arXiv:2508.09940.
  

\bibitem[Fu89]{Fu89}  B.~Fuglede, \textit{Stability in the isoperimetric problem for convex or nearly spherical domains in $R^n$}, Trans. Amer. Math. Soc. \textbf{314} (1989), 619--638.


\bibitem[Fu15]{Fu15} N.~Fusco, \textit{The quantitative isoperimetric inequality and related topics}, Bull. Math. Sci. \textbf{5} (2015), 517--607.

\bibitem[FMP08]{FMP08} N.~Fusco, F.~Maggi, and A.~Pratelli, \textit{The sharp quantitative isoperimetric inequality}, Ann. Math. \textbf{168} (2008), 941--980.

\bibitem[Ga30]{Ga30} G.~Gamow, \textit{Mass defect curve and nuclear constitution}, Proc. R. Soc. Lond. Ser. A \textbf{126} (1930), 632--644.

 \bibitem[GYZ25]{GYZ25}
	R.~Gong, Q.~Yang, and S.~Zhang, \emph{A simple proof of reverse Sobolev inequalities on the sphere and Sobolev trace inequalities on the unit ball}, J. Funct. Anal. \textbf{290} (2026), no.~9, 111380.

 \bibitem[Gr85]{Gr85}   P.~Grisvard, \emph{Elliptic problems in nonsmooth domains}, Monographs and
Studies in Mathematics \textbf{24}, Pitman Advanced Publishing Program, Boston (1985).

\bibitem[GW04]{GW04}
		P.~Guan and G.~Wang, \emph{Geometric inequalities on locally conformally flat manifolds}, Duke Math. J. \textbf{124} (2004), no.~1,
		177--212.

        \bibitem[GLZ25]{GLZ25}A.~Guerra, X.~Lamy, and K.~Zemas, \emph{Sharp quantitative stability of the
  {M}\"obius group among sphere-valued maps in arbitrary dimension}, Trans.
  Amer. Math. Soc. \textbf{378} (2025), 1235--1259.

\bibitem[Ha92]{Ha92} R.~R.~Hall, \textit{A quantitative  isoperimetric inequality in n-dimensional space}, J. Reine Angew. Math. \textbf{428} (1992), 161--176.

\bibitem[HP70]{HP70} S.~W.~Hawking and R.~Penrose, \textit{The singularities of gravitational collapse and cosmology}, Proc. Roy. Soc. Lond. A. \textbf{314} (1970), 529--548.

\bibitem[Hu06]{Hu06} G.~Huisken, \textit{An isoperimetric concept for mass and quasilocal mass}, Oberwolfach Rep. \textbf{3} (2006), 87--88.

\bibitem[KM13]{KM13}
	H.~Knüpfer and C.~Muratov, \emph{On an isoperimetric problem with a competing nonlocal term I. The planar
case}, Commun. Pure Appl. Math. \textbf{66} (2013), 1129--1162.

\bibitem[KM14]{KM14}
	H.~Knüpfer and C.~Muratov, \emph{On an isoperimetric problem with a competing nonlocal term II. The general
case}, Commun. Pure Appl. Math. \textbf{67} (2014), 1974--1994.

\bibitem[Ko25]{Ko25}
	T.~K{\"o}nig, \emph{Stability inequalities with explicit constants for a family of reverse Sobolev inequalities on the sphere}, Preprint (2025), arXiv:2504.19939.

  \bibitem[KP25]{KP25}
	T.~K{\"o}nig and J.~W.~Peteranderl, \emph{An almost-almost-Schur lemma on the 3-sphere}, Preprint (2025), arXiv:2510.25723.  
    
\bibitem[KS18]{KS18} M.~Kunzinger and C.~Sämann, \textit{Lorentzian length spaces}, Ann. Global Anal. Geom. \textbf{54} (2018), 399--447.

\bibitem[LS21]{LS21} B.~Lambert and J.~Scheuer, \textit{Isoperimetric problems for spacelike domains in generalized Robertson-Walker spaces}, J. Evol. Equ. \textbf{21} (2021), 377--389.

\bibitem[LLS21]{LLS21} C.~Lange, A.~Lytchak, and C.~Sämann, \textit{Lorentz meets Lipschitz}, Adv. Theor. Math. Phys. \textbf{25} (2021), no.~8, 2141--2170.

\bibitem[LP26a]{LP26a} C.~Lange and J.~W.~Peteranderl, \textit{Hausdorff-type metric geometry of the space of Cauchy hypersurfaces}, Preprint (2026), arXiv:2604.11783.


\bibitem[Le23]{Le23} P.~Le, \textit{Lorentz polarisation and isoperimetric inequality in Minkowski spacetime}, Preprint (2023), arXiv:2307.03301.


\bibitem[LP90]{LP90} 
P.-L.~Lions and F.~Pacella, \textit{Isoperimetric inequalities for convex cones}, Proc. Amer. Math. Soc. \textbf{109} (1990), 477--485.


\bibitem[Mc34]{Mc34} 
E.~J.~McShane, \textit{Extension of range of functions}, Bull. Amer. Math. Soc. \textbf{40} (1934), no.~12, 837--842.

\bibitem[Mi35]{Mi35}
  E.~A.~Milne, 
\textit{Relativity, Gravitation and World-Structure}, Clarendon Press, 
 Oxford (1935).

\bibitem[Mi19]{Mi19} E.~Minguzzi, \emph{Lorentzian causality theory}, Living Rev. Relativ. \textbf{22} (2019), no.~3.

\bibitem[On83]{On83} B.~O'Neill, \textit{Semi-Riemannian geometry. With applications to relativity}, Pure and Applied Mathematics \textbf{103}, Academic Press, New York (1983).

\bibitem[Os03]{Os03} W.~F.~Osgood, \textit{A Jordan Curve of Positive Area}, Trans. Amer. Math. Soc. \textbf{4} (1903), no.~1, 107--112. 

\bibitem[Os78]{Os78} R.~Osserman, \textit{The isoperimetric inequality}, Bull. Amer. Math. Soc. \textbf{84} (1978), 1182--1238.

\bibitem[TW22]{TW22}  C.-J.~Tsai and K.-H.~Wang, \textit{An isoperimetric-type inequality for spacelike submanifold in the Minkowski space}, Int. Math. Res. Not. IMRN \textbf{2022} (2022), no.~1, 128--139.
\end{thebibliography}
\end{document}